\newtheorem{theorem}{Theorem}[section]
\theoremstyle{plain}
\newtheorem{notation}{Notation}
\newtheorem{corollary}[theorem]{Corollary}
\newtheorem{lemma}[theorem]{Lemma}
\newtheorem{definition}[theorem]{Definition}
\newtheorem{proposition}[theorem]{Proposition}
\newtheorem{remark}[theorem]{Remark}
\newtheorem{example}[theorem]{Example}
\numberwithin{equation}{section}
\begin{document}
\title{Dioid Partitions of Groups}
\author{Ishay Haviv}
\address[Ishay Haviv]{The School of Computer Sciences, The Academic College
of Tel-Aviv-Yaffo, 2 Rabenu Yeruham St., Tel-Aviv 61083, Israel}
\author{Dan Levy}
\address[Dan Levy]{The School of Computer Sciences, The Academic College of
Tel-Aviv-Yaffo, 2 Rabenu Yeruham St., Tel-Aviv 61083, Israel}
\keywords{Dioids, Schur rings}
\subjclass{ }
\date{\today }

\begin{abstract}
A partition of a group is \textit{a dioid partition} if the following three
conditions are met: The setwise product of any two parts is a union of
parts, there is a part that multiplies as an identity element, and the
inverse of a part is a part. This kind of a group partition was first
introduced by Tamaschke in 1968. We show that a dioid partition defines a
dioid structure over the group, analogously to the way a Schur ring over a
group is defined. After proving fundamental properties of dioid partitions,
we focus on three part dioid partitions of cyclic groups of prime order. We
provide classification results for their isomorphism types as well as for
the partitions themselves.
\end{abstract}

\maketitle

\section{Introduction \label{Sect_Intro}}

A \emph{dioid} is a triple $\left( D,\oplus ,\otimes \right) $, where $D$ is
a set and $\oplus $ and $\otimes $ are two binary operations over $D$, such
that $\left( D,\oplus ,\otimes \right) $ is a semiring and $D$ is
canonically ordered by $\oplus $ (see Definition \ref{Def_Dioid}). Dioids
and rings share most of their axioms, except that in the ring case, the
additive substructure is a commutative group (hence cannot be canonically
ordered by $\oplus $ - see Remark \ref{Rem_DioidsAndRingsDoNotIntersect}).
In addition to the inherent algebraic interest in a structure which is both
similar to and clearly distinct from a ring, dioids have attracted much
attention over the last 30 years due to their interesting practical
applications. These include the solution of a variety of optimal path
problems in graph theory, extensions of classical algorithms for solving
shortest paths problems with time constraints, data analysis techniques,
hierarchical clustering and preference analysis, algebraic modeling of
fuzziness and uncertainty, and discrete event systems in automation (see 
\cite%
{GondranMinouxDioidBook2010,CSX2014,BaldanGadducci2008,GondranMinouxFuzzy2007,DelCampoEtAl2015}%
).

As is well known, groups give rise in a natural way to rings via the
construction of group rings and its generalization to Schur rings. The aim
of the present paper is to point out a similar connection between groups and
dioids. We consider the following definition, first introduced by Tamaschke 
\cite{Tamaschke1968} (see Remark \ref{Rem_TMain}).

\begin{definition}
\label{Def_d-partition}A \emph{dioid partition} (d-partition for short) of a
group $G$ is a partition $\Pi $ of $G$ which satisfies the following:

\begin{enumerate}
\item[a.] The closure property: $\forall \pi _{1},\pi _{2}\in \Pi $, the
setwise product $\pi _{1}\pi _{2}$ is a union of parts of $\Pi $.

\item[b.] Existence of an identity: There exists $e\in \Pi $ satisfying $%
e\pi =\pi e=\pi $ for every $\pi \in \Pi $. We will also write $e_{\Pi }$.

\item[c.] The inverse property: $\Pi $ is invariant under the $G\rightarrow
G $ mapping defined by $g\longmapsto g^{-1}$ for all $g\in G$.
\end{enumerate}

When $e=\left\{ 1_{G}\right\} $ we shall say that $\Pi $ is a 1d-partition.
\end{definition}

Note that certain classical group partitions satisfy the above conditions
and thus form d-partitions.

\begin{example}
\label{Example_Classical}

1. If $A$ is a subgroup of the group $G$ then the set $\Pi =\left\{ AxA|x\in
G\right\} $ of all double cosets of $A$ in $G$ is a d-partition of $G$ with
identity $A$ and $\left( AxA\right) ^{-1}=Ax^{-1}A$. The case $A=\left\{
1_{G}\right\} $ yields the \emph{singleton partition} $\Pi =\left\{ \left\{
g\right\} |g\in G\right\} $ of $G$.

2. Let $\Pi $ be the set of all conjugacy classes of the group $G$. Here the
identity element is $Cl\left( 1_{G}\right) =\left\{ 1_{G}\right\} $, and $%
Cl\left( x\right) ^{-1}=Cl\left( x^{-1}\right) $.
\end{example}

The following theorem shows that a d-partition of a group $G$ gives rise to
a dioid.

\begin{theorem}
\label{Th_DioidOutOfd-partition}Let $G$ be a group and $\Pi $ a d-partition
of $G$. Let $D_{\Pi }$ be the set of all possible unions of parts of $\Pi $.
Denote set union by $\oplus $ and setwise product of subsets of $G$ by $%
\otimes $. Then $\left( D_{\Pi },\oplus ,\otimes \right) $ is a dioid.
\end{theorem}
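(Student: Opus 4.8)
The plan is to verify, one by one, the axioms of Definition \ref{Def_Dioid}: that $\left( D_{\Pi },\oplus ,\otimes \right) $ is a semiring and that it is canonically ordered by $\oplus $. The first thing to pin down is that $D_{\Pi }$ is genuinely closed under both operations, since everything else will be inherited from the ambient group and the Boolean algebra of subsets of $G$. Closure under $\oplus $ is immediate: a union of two unions of parts is again a union of parts, so $\oplus $ does not leave $D_{\Pi }$. Closure under $\otimes $ is where the hypotheses on $\Pi $ first enter. Writing two elements of $D_{\Pi }$ as $X=\bigcup _{i}\pi _{i}$ and $Y=\bigcup _{j}\pi _{j}'$ with $\pi _{i},\pi _{j}'\in \Pi $, the distributivity of setwise product over union gives $X\otimes Y=\bigcup _{i,j}\pi _{i}\pi _{j}'$, and by the closure property (condition a) each $\pi _{i}\pi _{j}'$ is itself a union of parts; hence $X\otimes Y\in D_{\Pi }$. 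I would record here the two elementary set-theoretic facts I am using throughout, namely $A\left( B\cup C\right) =AB\cup AC$, $\left( B\cup C\right) A=BA\cup CA$, and $A\varnothing =\varnothing A=\varnothing $, all of which hold for arbitrary subsets of $G$.

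Next I would check the two monoid structures. For $\oplus $: set union is associative and commutative, the empty union $\varnothing \in D_{\Pi }$ is its neutral element, so $\left( D_{\Pi },\oplus \right) $ is a commutative monoid (in fact idempotent, since $X\oplus X=X$). For $\otimes $: setwise product is associative because multiplication in $G$ is associative. The candidate multiplicative identity is the part $e_{\Pi }$ furnished by condition (b). Condition (b) only asserts $e_{\Pi }\pi =\pi e_{\Pi }=\pi $ for \emph{parts} $\pi $, so I must extend this to all of $D_{\Pi }$; this is exactly where distributivity does the work: for $X=\bigcup _{i}\pi _{i}$ one gets $e_{\Pi }\otimes X=\bigcup _{i}e_{\Pi }\pi _{i}=\bigcup _{i}\pi _{i}=X$, and symmetrically on the right. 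Thus $\left( D_{\Pi },\otimes \right) $ is a monoid with unit $e_{\Pi }$.

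The remaining semiring requirements are the distributive laws and the absorbing property of the additive identity. Both are the elementary set-theoretic facts noted above, now read inside $D_{\Pi }$: $\otimes $ distributes over $\oplus $ on either side, and $\varnothing \otimes X=X\otimes \varnothing =\varnothing $, so $\varnothing $ is absorbing. At this point $\left( D_{\Pi },\oplus ,\otimes \right) $ is a semiring. It is worth flagging that only conditions (a) and (b) of Definition \ref{Def_d-partition} have been used so far; the inverse property (c) is not needed to obtain the dioid structure.

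Finally I would verify the defining extra condition of a dioid, that the canonical preorder of $\oplus $ is a partial order. Because $\oplus $ is set union, the relation $X\preceq Y\iff \exists Z\in D_{\Pi }:X\oplus Z=Y$ coincides with ordinary inclusion: $X\cup Z=Y$ forces $X\subseteq Y$, and conversely $X\subseteq Y$ gives $X\cup Y=Y$ with $Z=Y\in D_{\Pi }$. Since inclusion is antisymmetric, $\preceq $ is an order relation, completing the proof that $\left( D_{\Pi },\oplus ,\otimes \right) $ is a dioid. The step most worth care is the closure under $\otimes $ together with the extension of the identity from parts to arbitrary elements of $D_{\Pi }$, as these are the only two places where the partition hypotheses are essential; the canonical-order axiom, by contrast, comes for free from the idempotence of union and is the feature that makes a dioid (rather than a ring) the natural target, matching Remark \ref{Rem_DioidsAndRingsDoNotIntersect}.
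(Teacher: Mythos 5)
Your proof is correct and follows essentially the same route as the paper's: a direct verification of the semiring axioms (with $\varnothing$ as additive neutral and absorbing element and $e_{\Pi}$ as multiplicative unit), followed by the observation that idempotence of union makes the canonical order relation coincide with set inclusion, hence antisymmetric (cf.\ Remark \ref{Rem_AntiSymmetryIsEnough}). The paper compresses the routine checks into ``straightforward to check,'' whereas you spell out the two steps where the d-partition hypotheses actually enter (closure of $D_{\Pi}$ under $\otimes$ and the extension of the identity from parts to unions), but the underlying argument is the same.
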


A dioid which is constructed from a d-partition $\Pi $ of $G$ as in Theorem %
\ref{Th_DioidOutOfd-partition}, will be called a \emph{Schur dioid} over $G$
(induced by $\Pi $). The reason for this terminology is the strong
resemblance of Schur dioids to Schur rings (see Section \ref%
{Subsect_SAndDPartitions}). In fact, every Schur ring defines a Schur dioid
in a natural way (see Proposition \ref{Prop_s_Implies_d}). The study of
Schur rings (and the more general structures of association schemes and
coherent configurations) is a well-developed research area which goes back
to the work of Issai Schur \cite{Schur1933}. Its first systematic treatment
was carried by H. Wielandt \cite{WielandtPermGroups1964}, and since then it
has been further developed and found fruitful applications (see \cite%
{MuzychukKlinPoschel2001},\cite{MuzychukPonomarenko2009}). Our interest in
studying Schur dioids was motivated by an observation in \cite%
{CGLMS2014conjugates}.\footnote{%
In \cite{CGLMS2014conjugates} it is proved that any group with a $BN$-pair
and a finite Weyl group is the product of three conjugates of the Borel
subgroup $B$ \cite[Theorem 1.5]{CGLMS2014conjugates}. The calculations
needed for the proof take place in the Schur dioid induced by the double
cosets of $B$.}

In the first part of the paper we consider general properties of
d-partitions. We prove several basic results and discuss the analogies and
direct relations between Schur dioids and Schur rings. We then present
several constructions of d-partitions from other d-partitions. The following
theorem provides conditions for refining d-partitions and for making them
coarser, and shows that every d-partition can be refined to a 1d-partition.
For part (c) see Remark \ref{Rem_T1}.

\begin{theorem}
\label{Th_General_1d_connection}Let $G$ be a group. For a d-partition $\Pi $
of $G$ and $A\leq G$ set 
\begin{eqnarray*}
\Pi _{<A} &:&=\left\{ \pi \in \Pi |\pi A=A\pi =A\right\} ,~ \\
\Pi _{>A} &:&=\left\{ \pi \in \Pi |\pi A=A\pi =\pi \right\} \backslash
\left\{ A\right\} \text{.}
\end{eqnarray*}

\begin{enumerate}
\item[a.] Let $\Pi $ be a d-partition of $G$, let $A\leq G$, and let $\Pi
^{\prime }:=\Pi _{>A}\cup \left\{ A\right\} $. Then $\Pi ^{\prime }$ is a
d-partition of $G$ with $e_{\Pi ^{\prime }}=A$ if and only if $\left\{ \Pi
_{<A},\Pi _{>A}\right\} $ is a partition of $\Pi $.

\item[b.] Let $A\leq G$, let $\Pi _{A}$ be a d-partition of $A$, and let $%
\Pi ^{\prime }$ be a d-partition of $G$ with $e_{\Pi ^{\prime }}=A$. Then $%
\Pi :=\Pi _{A}\cup \left( \Pi ^{\prime }\backslash \left\{ A\right\} \right) 
$ is a d-partition of $G$ with $e_{\Pi }=e_{\Pi _{A}}$. In particular, for
any d-partition $\Pi ^{\prime }$ of $G$ with $e_{\Pi ^{\prime }}=A$ there
exists a 1d-partition $\Pi $ of $G$ such that $\Pi _{>A}^{\prime }\subseteq
\Pi $.

\item[c.] Let $\Pi $ be a d-partition of $G$, and let $A\leq G$ be such that 
$\Pi _{<A}$ is a partition of $A$. Then $\Pi _{<A}$ is a d-partition of $A$
with $e_{\Pi _{<A}}=e_{\Pi }$ and 
\begin{equation*}
\Pi ^{\prime }:=\left\{ A\pi A|\pi \in \Pi \backslash \Pi _{<A}\right\} \cup
\left\{ A\right\}
\end{equation*}%
is a d-partition of $G$ with $e_{\Pi ^{\prime }}=A$.

\item[d.] Let $\Pi $ be a d-partition of $G$, and let $A<G$ be such that $%
\Pi _{<A}$ is a partition of $A$. Set $\pi _{c}:=G\backslash A$. Then $\Pi
_{C}:=\Pi _{<A}\cup \left\{ \pi _{c}\right\} $ is a d-partition of $G$ with $%
e_{\Pi _{C}}=e_{\Pi _{<A}}$.
\end{enumerate}
\end{theorem}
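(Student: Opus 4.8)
The plan is to verify the three conditions of Definition \ref{Def_d-partition} for $\Pi _{C}$ directly, using part (c) to dispose of everything that stays inside $A$ and treating the new part $\pi _{c}$ by hand. First, $\Pi _{C}$ really is a partition of $G$: by hypothesis $\Pi _{<A}$ partitions $A$, and $\pi _{c}=G\backslash A$ is nonempty because $A<G$ is proper, so $\Pi _{<A}\cup \left\{ \pi _{c}\right\} $ partitions $G$ into nonempty parts. Since the hypothesis of (d) is exactly that of (c), part (c) applies and gives that $\Pi _{<A}$ is a d-partition of $A$ with $e_{\Pi _{<A}}=e_{\Pi }$. In particular $e_{\Pi }\in \Pi _{<A}$, products of two parts of $\Pi _{<A}$ are unions of parts of $\Pi _{<A}$, and $\Pi _{<A}$ is closed under inversion; all of these carry over verbatim to $\Pi _{C}$. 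Note also that every $\tau \in \Pi _{<A}$ satisfies $\tau \subseteq A$, since $\tau =\tau \left\{ 1_{G}\right\} \subseteq \tau A=A$.

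The key step, which I expect to be the main obstacle, is to show that each individual part $\tau \in \Pi _{<A}$ absorbs $\pi _{c}$ on both sides: $\tau \pi _{c}=\pi _{c}\tau =\pi _{c}$. The easy half is the inclusion. Because $A$ is a subgroup and $1_{G}\in A$, one checks $A\left( G\backslash A\right) =G\backslash A$ and $\left( G\backslash A\right) A=G\backslash A$ (if $a\left( G\backslash A\right) $ met $A$, left multiplication by $a^{-1}\in A$ would force a point of $G\backslash A$ into $A$); hence from $\tau \subseteq A$ we get $\tau \pi _{c}\subseteq A\pi _{c}=\pi _{c}$ and $\pi _{c}\tau \subseteq \pi _{c}$. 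Upgrading these to equalities is the delicate point, and I would do it with an inverse ``sandwich''. Fix $\sigma \in \Pi _{<A}$; then $\sigma ^{-1}\in \Pi _{<A}$ by closure under inversion, and $1_{G}\in \sigma ^{-1}\sigma $ since $s^{-1}s=1_{G}$ for any $s\in \sigma $. Therefore
\[
\pi _{c}=\left\{ 1_{G}\right\} \pi _{c}\subseteq \left( \sigma ^{-1}\sigma \right) \pi _{c}=\sigma ^{-1}\left( \sigma \pi _{c}\right) \subseteq \sigma ^{-1}\pi _{c}\subseteq \pi _{c},
\]
where the middle inclusion uses $\sigma \pi _{c}\subseteq \pi _{c}$ and the last uses $\sigma ^{-1}\in \Pi _{<A}$. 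Thus every inclusion is an equality, so $\sigma ^{-1}\pi _{c}=\pi _{c}$; as $\sigma $ ranges over $\Pi _{<A}$ and $\sigma \mapsto \sigma ^{-1}$ permutes $\Pi _{<A}$, we obtain $\tau \pi _{c}=\pi _{c}$ for every $\tau \in \Pi _{<A}$, and $\pi _{c}\tau =\pi _{c}$ follows by inverting, using $\pi _{c}^{-1}=\pi _{c}$ and $\tau ^{-1}\in \Pi _{<A}$.

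It remains to compute $\pi _{c}\pi _{c}$. From $A\pi _{c}=\pi _{c}A=\pi _{c}$ we get $A\pi _{c}\pi _{c}A=\left( A\pi _{c}\right) \left( \pi _{c}A\right) =\pi _{c}\pi _{c}$, so $\pi _{c}\pi _{c}$ is $A$-bi-invariant; since $1_{G}\in \pi _{c}\pi _{c}$ (because $x^{-1}\in \pi _{c}$ for $x\in \pi _{c}$), this yields $A=A\left\{ 1_{G}\right\} A\subseteq \pi _{c}\pi _{c}$. I would then split on the index $[G:A]\geq 2$. If $[G:A]=2$ then $A\trianglelefteq G$ and the product of any two elements of the nontrivial coset $\pi _{c}$ lies in $A$, so $\pi _{c}\pi _{c}=A$. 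If $[G:A]\geq 3$, then for each $g\in G$ the ``forbidden'' set of $y$ satisfying $y\in A$ or $gy^{-1}\in A$ is contained in the two right cosets $A\cup Ag$, which cannot cover $G$; choosing $y$ outside it writes $g=\left( gy^{-1}\right) y$ with both factors in $\pi _{c}$, so $\pi _{c}\pi _{c}=G$. In either case $\pi _{c}\pi _{c}$ is a union of parts of $\Pi _{C}$ (either $A=\bigcup _{\tau \in \Pi _{<A}}\tau $, or all of $G$).

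Collecting the pieces, I would conclude as follows. The closure property holds for products of two parts of $\Pi _{<A}$ by part (c), for mixed products since $\tau \pi _{c}=\pi _{c}\tau =\pi _{c}$, and for $\pi _{c}\pi _{c}$ by the previous paragraph. Existence of an identity holds with $e_{\Pi }$, which fixes every $\tau \in \Pi _{<A}$ by part (c) and fixes $\pi _{c}$ because $e_{\Pi }\in \Pi _{<A}$. The inverse property holds because $\Pi _{<A}$ is inverse-closed and $\pi _{c}^{-1}=\pi _{c}$. Hence $\Pi _{C}$ is a d-partition of $G$ with $e_{\Pi _{C}}=e_{\Pi }=e_{\Pi _{<A}}$.
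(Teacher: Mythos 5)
Your proposal addresses only part (d) of the theorem: parts (a), (b), and (c) are nowhere proved. Invoking (c) inside the proof of (d) is legitimate in itself --- the paper does exactly that --- but as a proof of the stated theorem the attempt is incomplete, since the statement comprises all four parts, and (a)--(c) each require their own nontrivial arguments; in particular (c), on which your entire argument rests, is the most substantial piece of the paper's proof (it needs, among other things, the fact that $A\pi_{1}A \cap A\pi_{2}A \neq \emptyset$ forces $A\pi_{1}A = A\pi_{2}A$ for $\pi_{1},\pi_{2} \in \Pi \backslash \Pi_{<A}$, which does not follow formally from $\Pi$ being a d-partition without an argument).

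As for part (d) itself, your argument is correct, and it takes a genuinely different, more self-contained route than the paper's. The paper, after citing (c), also cites part (b): since $\Pi_{<A}$ is a d-partition of $A$, part (b) allows one to replace the identity part $A$ of a d-partition of $G$ by the finer partition $\Pi_{<A}$, so the paper may assume $\Pi_{<A}=\left\{ A\right\} $ and only has to verify the two-part partition $\left\{ A,\pi _{c}\right\} $: namely $A^{2}=A$, $A\pi _{c}=\pi _{c}A=\pi _{c}$, and the computation of $\pi _{c}^{2}$. Your absorption step $\tau \pi _{c}=\pi _{c}\tau =\pi _{c}$ for every individual $\tau \in \Pi _{<A}$ --- established by the sandwich $\pi _{c}\subseteq \sigma ^{-1}\left( \sigma \pi _{c}\right) \subseteq \sigma ^{-1}\pi _{c}\subseteq \pi _{c}$ --- is exactly the content that the paper's appeal to (b) packages away; re-deriving it by hand buys independence from (b) at the cost of a little extra work. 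The two treatments of $\pi _{c}^{2}$ agree on the dichotomy $\left[ G:A\right] =2$ versus $\left[ G:A\right] \geq 3$, and your covering argument (the forbidden set $A\cup Ag$ cannot exhaust $G$ when there are at least three cosets) is, if anything, tidier than the paper's. So the mathematics you wrote is sound, but to prove the statement as given you must also supply proofs of (a), (b), and (c).
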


The next theorem shows how automorphisms of $G$ can induce new, coarser,
d-partitions of $G$.

\begin{theorem}
\label{Th_CoarsenessViaAction}Let $G$ be a group and let $A$ be a group that
acts on $G$ as automorphisms. Let $\Pi $ be a d-partition of $G$. Suppose
that the action of $A$ on $G$ induces an action of $A$ on $\Pi $, namely,
that for any $a\in A$ and any $\pi \in \Pi $ we have $\pi ^{a}\in \Pi $.
Denote by $U_{\left[ \pi \right] }$ the union of the parts of $\Pi $
contained in the orbit of $\pi \in \Pi $ under the induced action, that is, $%
U_{\left[ \pi \right] }:=\tbigcup\limits_{a\in A}\pi ^{a}$, and set $\Pi
^{\prime }:=\left\{ U_{\left[ \pi \right] }|\pi \in \Pi \right\} $. Then $%
\Pi ^{\prime }$ is a d-partition of $G$, with $e_{\Pi ^{\prime }}=e_{\Pi }$.
\end{theorem}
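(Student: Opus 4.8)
The plan is to verify the three defining conditions of Definition \ref{Def_d-partition} for $\Pi ^{\prime }$, using throughout two elementary facts about the automorphism action: since each $a\in A$ is a group automorphism of $G$, it respects setwise products, $\left( XY\right) ^{a}=X^{a}Y^{a}$, and inversion, $\left( X^{-1}\right) ^{a}=\left( X^{a}\right) ^{-1}$, for all $X,Y\subseteq G$. The second recurring observation is that each block $U_{\left[ \pi \right] }$ is $A$-invariant: since $\left( \cdot \right) ^{a}$ permutes the $A$-orbit of $\pi $, we have $U_{\left[ \pi \right] }^{a}=\bigcup_{b\in A}\left( \pi ^{b}\right) ^{a}=\bigcup_{b\in A}\pi ^{b}=U_{\left[ \pi \right] }$. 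First, though, I would record that $\Pi ^{\prime }$ is genuinely a partition of $G$: the $A$-orbits partition $\Pi $, the parts inside one orbit are pairwise disjoint (being parts of $\Pi $) and parts from different orbits are disjoint as well, so the sets $U_{\left[ \pi \right] }$ are pairwise disjoint and cover $G$.

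For the closure property, take $U_{\left[ \pi _{1}\right] },U_{\left[ \pi _{2}\right] }\in \Pi ^{\prime }$ and write their product as $\bigcup_{a,b\in A}\pi _{1}^{a}\pi _{2}^{b}$. Each $\pi _{1}^{a}\pi _{2}^{b}$ is a product of two parts of $\Pi $, hence a union of parts of $\Pi $ by the closure property of $\Pi $; therefore $U_{\left[ \pi _{1}\right] }U_{\left[ \pi _{2}\right] }$ is a union of parts of $\Pi $. Using the two invariance facts above, $\left( U_{\left[ \pi _{1}\right] }U_{\left[ \pi _{2}\right] }\right) ^{c}=U_{\left[ \pi _{1}\right] }^{c}U_{\left[ \pi _{2}\right] }^{c}=U_{\left[ \pi _{1}\right] }U_{\left[ \pi _{2}\right] }$ for every $c\in A$, so this product is $A$-invariant. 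The crux is then the observation that a set which is simultaneously a union of parts of $\Pi $ and $A$-invariant is automatically a union of $A$-orbits of parts, i.e.\ a union of parts of $\Pi ^{\prime }$: if a part $\sigma \in \Pi $ is contained in such a union $S$, then for each $c\in A$ the part $\sigma ^{c}$ satisfies $\sigma ^{c}\subseteq S^{c}=S$, and since a part of $\Pi $ contained in a union of parts of $\Pi $ must be one of those parts, the entire orbit of $\sigma $ lies in $S$.

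For the identity, I would show that $e_{\Pi }$ is fixed by the action. Applying an automorphism $a$ to the identity relations gives $e_{\Pi }^{a}\cdot \rho ^{a}=\left( e_{\Pi }\rho \right) ^{a}=\rho ^{a}$ and symmetrically on the right; as $\rho $ ranges over $\Pi $ so does $\rho ^{a}$, so $e_{\Pi }^{a}$ is again a two-sided identity for $\Pi $. Since the identity part is unique (if $e,e^{\prime }$ both satisfy the identity law then $e=ee^{\prime }=e^{\prime }$), we conclude $e_{\Pi }^{a}=e_{\Pi }$, whence $U_{\left[ e_{\Pi }\right] }=e_{\Pi }\in \Pi ^{\prime }$. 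That $e_{\Pi }$ acts as the identity of $\Pi ^{\prime }$ then follows from $e_{\Pi }\cdot U_{\left[ \pi \right] }=\bigcup_{a}e_{\Pi }\pi ^{a}=\bigcup_{a}e_{\Pi }^{a}\pi ^{a}=\bigcup_{a}\pi ^{a}=U_{\left[ \pi \right] }$, and symmetrically on the right, giving $e_{\Pi ^{\prime }}=e_{\Pi }$. Finally, for the inverse property, inversion-compatibility yields $U_{\left[ \pi \right] }^{-1}=\bigcup_{a}\left( \pi ^{a}\right) ^{-1}=\bigcup_{a}\left( \pi ^{-1}\right) ^{a}=U_{\left[ \pi ^{-1}\right] }$, and since $\pi ^{-1}\in \Pi $ by the inverse property of $\Pi $, this is again a part of $\Pi ^{\prime }$.

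All of these computations are short; the one place that carries the genuine content of the theorem, and where I expect the main obstacle to lie, is the closure step—specifically the passage from ``$A$-invariant union of $\Pi $-parts'' to ``union of $\Pi ^{\prime }$-parts.'' Everything else reduces to pushing automorphisms through setwise products and inverses and exploiting that the action permutes both the elements of $A$ and the parts of $\Pi $.
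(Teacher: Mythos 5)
Your proof is correct, and its treatment of the closure property takes a genuinely different route from the paper's. The paper proves closure by a direct computation: it writes $U_{\left[ \pi _{1}\right] }U_{\left[ \pi _{2}\right] }=\tbigcup_{a_{1},a_{2}\in A}\pi _{1}^{a_{1}}\pi _{2}^{a_{2}}$, applies the conjugation identity $\pi _{1}^{a_{1}}\pi _{2}^{a_{2}}=\left( \pi _{1}\pi _{2}^{a_{2}a_{1}^{-1}}\right) ^{a_{1}}$ to re-index the double union as $\tbigcup_{g\in A}\tbigcup_{a\in A}\left( \pi _{1}\pi _{2}^{g}\right) ^{a}$, expands each $\pi _{1}\pi _{2}^{g}$ as a union of parts of $\Pi $ using closure of $\Pi $, and then interchanges unions to collect these parts into whole orbit-blocks $U_{\left[ \pi _{i}\right] }$. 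You instead factor the argument through a clean intermediate principle: the product is simultaneously a union of parts of $\Pi $ (since each $\pi _{1}^{a}\pi _{2}^{b}$ is a product of two parts of $\Pi $) and $A$-invariant (since each block $U_{\left[ \pi \right] }$ is), and any $A$-invariant union of $\Pi $-parts is automatically a union of $\Pi ^{\prime }$-parts, because the orbit of any part contained in such a set stays inside it. Your route buys modularity and avoids the index gymnastics — the invariance principle is reusable and makes transparent \emph{why} the product must decompose into orbit-blocks — while the paper's computation is self-contained and exhibits the decomposition explicitly. The remaining steps are essentially the same in both treatments; the only other (minor) divergence is that you derive $e_{\Pi }^{a}=e_{\Pi }$ from uniqueness of the identity part, whereas the paper gets it more quickly from the fact that $e_{\Pi }^{a}$ is a part containing $1_{G}$ and $e_{\Pi }$ is the unique such part.
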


One natural example where the action of $A$ on $G$ induces an action of $A$
on $\Pi $ is provided by taking $\Pi $ to be a double coset partition of an $%
A$-invariant subgroup of $G$. This idea is elaborated in Section \ref%
{Sect_Inducing} (see Corollary \ref{Coro_AInvariantH}, Example \ref%
{Example_ZnMultiplicative} and Corollary \ref{Coro_Supplement_construction}).

Finally we show that d-partitions lift from a quotient group to the group.

\begin{theorem}
\label{Th_LiftFromQuotient}Let $G$ be a group, $N\trianglelefteq G$, and $%
\overline{\Pi }$ a d-partition of $\overline{G}:=G/N$. Then the set $\Pi $
of all of the full preimages in $G$ of the parts of $\overline{\Pi }$ is a
d-partition of $G$.
\end{theorem}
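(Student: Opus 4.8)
The plan is to work with the canonical projection $\varphi\colon G\rightarrow \overline{G}=G/N$, $g\mapsto gN$, and to note that by construction each part of $\Pi$ is a full preimage $\varphi^{-1}(\overline{\pi})$ of some $\overline{\pi}\in\overline{\Pi}$. First I would check that $\Pi$ is genuinely a partition of $G$: since $\overline{\Pi}$ partitions $\overline{G}$ and $\varphi$ is surjective, the preimages $\varphi^{-1}(\overline{\pi})$ are nonempty, pairwise disjoint (preimages of disjoint sets are disjoint), and cover $G$ because their union is $\varphi^{-1}(\overline{G})=G$; moreover distinct parts $\overline{\pi}$ yield distinct preimages. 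This step is routine set theory and uses nothing about the group structure.

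The heart of the argument is a single lemma about products of preimages under the surjective homomorphism $\varphi$: for any subsets $\overline{S},\overline{T}\subseteq\overline{G}$ one has $\varphi^{-1}(\overline{S})\,\varphi^{-1}(\overline{T})=\varphi^{-1}(\overline{S}\,\overline{T})$. The inclusion $\subseteq$ is immediate, since $\varphi(ab)=\varphi(a)\varphi(b)\in\overline{S}\,\overline{T}$ whenever $a\in\varphi^{-1}(\overline{S})$ and $b\in\varphi^{-1}(\overline{T})$. For $\supseteq$, given $g$ with $\varphi(g)=\overline{s}\,\overline{t}$ where $\overline{s}\in\overline{S}$ and $\overline{t}\in\overline{T}$, I would use surjectivity to pick $a\in\varphi^{-1}(\overline{s})$ and set $b:=a^{-1}g$; then $\varphi(b)=\overline{s}^{\,-1}\varphi(g)=\overline{t}$, so $b\in\varphi^{-1}(\overline{T})$ and $g=ab$ lies in the product. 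I expect this lemma to be the only place where genuine care is needed, and it is exactly where the hypothesis $N\trianglelefteq G$ enters, via $\overline{G}$ being a group and $\varphi$ a surjective homomorphism.

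With the lemma in hand, the three d-partition conditions for $\Pi$ follow by transporting the corresponding facts from $\overline{\Pi}$ along $\varphi$. For the closure property, given parts $\pi_i=\varphi^{-1}(\overline{\pi_i})$, the lemma gives $\pi_1\pi_2=\varphi^{-1}(\overline{\pi_1}\,\overline{\pi_2})$; writing $\overline{\pi_1}\,\overline{\pi_2}=\bigcup_j\overline{\sigma_j}$ as a union of parts of $\overline{\Pi}$ (closure in $\overline{\Pi}$) and applying $\varphi^{-1}$ exhibits $\pi_1\pi_2=\bigcup_j\varphi^{-1}(\overline{\sigma_j})$ as a union of parts of $\Pi$. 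For the identity, I would set $e_{\Pi}:=\varphi^{-1}(e_{\overline{\Pi}})$; the lemma together with $e_{\overline{\Pi}}\,\overline{\pi}=\overline{\pi}\,e_{\overline{\Pi}}=\overline{\pi}$ yields $e_{\Pi}\pi=\pi e_{\Pi}=\pi$ for every $\pi\in\Pi$. For the inverse property, a direct elementwise check gives $(\varphi^{-1}(\overline{\pi}))^{-1}=\varphi^{-1}(\overline{\pi}^{\,-1})$, using $\varphi(g^{-1})=\varphi(g)^{-1}$, so that $\pi^{-1}\in\Pi$ precisely because $\overline{\pi}^{\,-1}\in\overline{\Pi}$.

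The main obstacle, such as it is, lies entirely in formulating and proving the product lemma correctly, and in particular its $\supseteq$ inclusion, where surjectivity of $\varphi$ is essential. Once the lemma is established, the verification of the three axioms is a mechanical transfer, and the partition claim is pure set theory, so I would devote most of the write-up to the lemma itself.
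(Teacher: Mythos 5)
Your proposal is correct and follows essentially the same route as the paper: both transfer the three axioms along the canonical projection, the crux in each case being that the setwise product of full preimages equals the full preimage of the setwise product. The only cosmetic difference is how that identity is justified --- the paper uses $N$-saturation ($N\pi_{1}=\pi_{1}$ gives $N\left( \pi_{1}\pi_{2}\right) =\pi_{1}\pi_{2}$, hence $\pi_{1}\pi_{2}=\nu^{-1}\left( \nu\left( \pi_{1}\right) \nu\left( \pi_{2}\right) \right) $), while you pick a preimage of one factor and solve for the other using surjectivity.
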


In the second part of the paper we study d-partitions of finite cyclic
groups of prime order. While the Schur partitions of such groups were fully
classified by Gordon in \cite{BGordonPacific1964} (see Theorem \ref%
{Th_BGordon}), the possibilities for d-partitions are significantly richer.
We focus on the case of $3$-part d-partitions which turn out to be related
to natural questions in additive combinatorics. Although the $3$-part case
is still quite rich, we are able to provide the following classification
result.

\begin{theorem}
\label{Th_3PartIsoClassification}Let $p$ be a prime, and let $G=\left( 
\mathbb{Z}_{p},+_{p}\right) $. Then the $3$-part d-partitions of $G$ are all
the partitions $\Pi =\left\{ \pi _{0},\pi _{1},\pi _{2}\right\} $ of $G$
with $\pi _{0}=\left\{ 0\right\} $ and $\left\vert \pi _{1}\right\vert \leq
\left\vert \pi _{2}\right\vert $, for which exactly one of the following
holds:

\begin{enumerate}
\item[(1)] $p=3$ and $\Pi $ is the singleton partition.

\item[(2)] $p=5$ and $\Pi =\left\{ \left\{ 0\right\} ,\left\{ 1,4\right\}
,\left\{ 2,3\right\} \right\} $.

\item[(3)] $p>5$, $\pi _{1}=-\pi _{1}$ and $\pi _{1}+\pi _{1}=\pi _{0}\cup
\pi _{2}$.

\item[(4)] $p>5$, $\pi _{1}=-\pi _{1}$, $\pi _{1}+\pi _{1}=G$ and $\pi _{2}$
is not an arithmetic progression of size $\frac{p-1}{2}$.

\item[(5)] $p>5$, $\pi _{1}=-\pi _{2}$ and $\pi _{1}+\pi _{1}=\pi _{1}\cup
\pi _{2}$.
\end{enumerate}
\end{theorem}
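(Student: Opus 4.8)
The plan is to first normalize the partition and then reduce the whole problem to a single condition on $\pi_1+\pi_1$, which the additive structure then pins down. Write $\Pi=\{\pi_0,\pi_1,\pi_2\}$ and set $a=|\pi_1|\le b=|\pi_2|$. The identity part satisfies $e_\Pi+e_\Pi=e_\Pi$, so being a finite nonempty subset of $\mathbb{Z}_p$ closed under addition it is a subgroup; as a proper part it must be $\{0\}$, whence $\pi_0=\{0\}$ and $\pi_1\cup\pi_2=\mathbb{Z}_p\setminus\{0\}$. The inverse property sends $\pi_1$ to a part of equal size, so either $-\pi_1=\pi_1$ and $-\pi_2=\pi_2$ (\emph{Case A}), or $-\pi_1=\pi_2$ (\emph{Case B}); in the latter $a=b=\tfrac{p-1}{2}$, and in the former both parts are symmetric, hence of even cardinality. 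In either case the only nontrivial closure requirements are that $\pi_1+\pi_1$, $\pi_1+\pi_2$ and $\pi_2+\pi_2$ be unions of parts.

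Next I would determine the possible value of $\pi_1+\pi_1$. Two elementary devices recur: (i) a symmetric set $X\ni 0$ closed under addition is a subgroup, hence $\{0\}$ or $\mathbb{Z}_p$; and (ii) a set invariant under both $x\mapsto -x$ and $x\mapsto g-x$ is invariant under the translation $x\mapsto x+g$, so for $g\neq 0$ it is $\emptyset$ or $\mathbb{Z}_p$. In Case A the set $\pi_1+\pi_1$ is symmetric and contains $0$; being a union of parts it lies in $\{\{0\},\{0\}\cup\pi_1,\{0\}\cup\pi_2,\mathbb{Z}_p\}$, and (i) rules out the first two, leaving $\pi_1+\pi_1=\{0\}\cup\pi_2$ (form (3)) or $\pi_1+\pi_1=\mathbb{Z}_p$ (form (4)). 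In Case B we have $0\notin\pi_1+\pi_1$, so $\pi_1+\pi_1\in\{\pi_1,\pi_2,\pi_1\cup\pi_2\}$; the first is impossible, and the Cauchy--Davenport inequality gives $|\pi_1+\pi_1|\ge p-2$, which for $p>3$ excludes $\pi_2$, leaving $\pi_1+\pi_1=\pi_1\cup\pi_2$ (form (5)). A short Cauchy--Davenport count (using $|\pi_1+\pi_1|=p-a$ in form (3)) also shows $a\le\tfrac{p+1}{3}$ there, hence $a<b$ once $p>5$.

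I would then show that these conditions on $\pi_1+\pi_1$ already force the remaining products to close, so each of (3)--(5) is equivalent to being a d-partition. Using device (ii) one checks in \emph{all} of Case A that no $g\in\pi_1$ lies outside $\pi_1+\pi_2$, and a periodicity argument shows every $g\in\pi_2$ lies in it while $0\notin\pi_1+\pi_2$; hence $\pi_1+\pi_2=\pi_1\cup\pi_2$ automatically. A similar reflection argument gives $\pi_1\subseteq\pi_2+\pi_2$ always, so $\pi_2+\pi_2\supseteq\{0\}\cup\pi_1$, and when $a<b$ this already yields $\pi_2+\pi_2=\mathbb{Z}_p$, settling form (3) and the case $a<b$ of (4). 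In Case B the identities $\pi_2+\pi_2=-(\pi_1+\pi_1)=\pi_1\cup\pi_2$ and $\pi_1+\pi_2=\pi_1-\pi_1$ reduce everything to $\pi_1-\pi_1=\mathbb{Z}_p$; since $|\pi_1+\pi_1|=p-1=2|\pi_1|$ exceeds the Cauchy--Davenport minimum, $\pi_1$ is not an arithmetic progression, so $|\pi_1-\pi_1|>p-2$ and, by symmetry, $\pi_1-\pi_1=\mathbb{Z}_p$.

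The remaining and hardest case is form (4) with $a=b=\tfrac{p-1}{2}$, where all that is left is whether $\pi_2+\pi_2$ is a union of parts; here I expect the main obstacle to be an \emph{inverse} sumset argument. Cauchy--Davenport forces $|\pi_2+\pi_2|\in\{p-2,p-1,p\}$, symmetry rules out $p-1$, and the value $p$ gives $\pi_2+\pi_2=\mathbb{Z}_p$, a union of parts. The boundary value $|\pi_2+\pi_2|=p-2=2|\pi_2|-1$ is exactly the equality case of Cauchy--Davenport, so by Vosper's theorem it occurs precisely when $\pi_2$ is an arithmetic progression of size $\tfrac{p-1}{2}$; in that situation $\pi_2+\pi_2$ is an arithmetic progression missing a symmetric pair $\{y,-y\}$ with $y\neq 0$, which for $p>5$ is not a union of parts (no union of parts has size $2$). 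Thus form (4) is a d-partition iff $\pi_2$ is not such a progression, matching the stated condition. The small primes are immediate: $\mathbb{Z}_3$ admits only the singleton partition (case (1)), and for $p=5$ a direct check leaves only $\{\{0\},\{1,4\},\{2,3\}\}$ (case (2)); mutual exclusivity of (1)--(5) follows from the split into $p=3$, $p=5$, and, for $p>5$, the value of $\pi_1+\pi_1$ together with Case A versus Case B. Recognizing and correctly applying Vosper's inverse theorem in this tight case is the crux of the argument.
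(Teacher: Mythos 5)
Your proposal is correct, and although it shares the paper's skeleton --- split on the inverse property into the symmetric case and the case $\pi _{1}=-\pi _{2}$, determine $\pi _{1}+\pi _{1}$, settle $p=3,5$ by inspection, and apply Vosper's theorem at the tight cases --- several of your supporting arguments take a genuinely different, more elementary route. For necessity in the symmetric case the paper (Lemma \ref{Lem_StructureConstsFor3Parts}) runs Cauchy--Davenport on every product $\pi _{i}+\pi _{j}$ and needs Lemma \ref{Lem_ZpSingletonOrDoubleImplyMult} to exclude small parts, whereas your device (i) --- if $\pi _{1}+\pi _{1}$ were $\left\{ 0\right\} $ or $\left\{ 0\right\} \cup \pi _{1}$, then $\left\{ 0\right\} \cup \pi _{1}$ would be additively closed, hence a subgroup of $\mathbb{Z}_{p}$ --- needs no additive combinatorics at all. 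Similarly, the paper proves $\pi _{1}+\pi _{2}=\pi _{1}\cup \pi _{2}$ by Cauchy--Davenport plus a parity count of symmetric sets, while your device (ii) (invariance under $x\mapsto -x$ and $x\mapsto g-x$ forces invariance under translation by $g$) does it by pure group theory; and your observation that form (3) forces $\left\vert \pi _{1}\right\vert \leq \frac{p+1}{3}<\left\vert \pi _{2}\right\vert $ lets you obtain $\pi _{2}+\pi _{2}=G$ there by counting, where the paper argues through Vosper. What each approach buys: yours confines Vosper to the two cases where it is unavoidable (form (4) with $\left\vert \pi _{1}\right\vert =\left\vert \pi _{2}\right\vert =\frac{p-1}{2}$, and form (5), where your application to the pair $\left( \pi _{1},-\pi _{1}\right) $ is literally the paper's application to $\left( \pi _{1},\pi _{2}\right) $), making the rest self-contained; the paper's uniform Cauchy--Davenport treatment is more compact and yields as a by-product the full product table of Lemma \ref{Lem_StructureConstsFor3Parts}, which the authors need anyway for the structure constants in Corollary \ref{Coro_StructureConstsFor3Parts}. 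Two compressed steps in your outline deserve a line each, but neither is a gap: in Case B, $\pi _{1}+\pi _{1}=\pi _{1}$ is impossible because then $\pi _{1}$ would contain all multiples of any of its elements, hence all of $\mathbb{Z}_{p}$; and in Case A with $\left\vert \pi _{1}\right\vert <\left\vert \pi _{2}\right\vert $, concluding $\pi _{2}+\pi _{2}=G$ also needs $\pi _{2}\subseteq \pi _{2}+\pi _{2}$, which follows from the same reflection argument (or at once from Cauchy--Davenport, since $\left\vert \pi _{1}\right\vert <\left\vert \pi _{2}\right\vert $ and $\left\vert \pi _{1}\right\vert +\left\vert \pi _{2}\right\vert =p-1$ give $2\left\vert \pi _{2}\right\vert -1\geq p$).
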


We note that Theorem \ref{Th_3PartIsoClassification} classifies all of the
distinct isomorphism types of $3$-part d-partitions of $\mathbb{Z}_{p}$, as
each of the cases (1)-(5) defines a distinct isomorphism type.\footnote{%
Informally, an isomorphism between two d-partitions is a bijection between
the two partitions that respects all of the algebraic content of Definition %
\ref{Def_d-partition}. For the precise definition see Section \ref%
{Subsect_BasicPropOfD}, Lemma \ref{Lem_Def_DioidStructureConst} and
Definition \ref{Def_IsomorphicDPartitions}.} For cases (4) and (5) of
Theorem \ref{Th_3PartIsoClassification} we provide a full and explicit
characterization of the corresponding d-partitions and use this
characterization to estimate their numbers (see Sections \ref%
{Subsect_SolutionsTo2} and \ref{Subsect_SolutionsTo3}). In case (3) of the
theorem, $\pi _{1}$ is a symmetric, complete sum-free subset of $G$ (see
Section \ref{Subsect_SolutionsTo1}). Such sets received a considerable
amount of attention in the literature and found interesting applications
(e.g., \cite{Schur1916,CameronSumFreeSurvey1987,GreenRuzsa2005}).

\begin{remark}
\label{Rem_TMain}During the review process of the current paper, we have
learned from the referees that the notion of d-partitions was previously
studied by Tamaschke (see \cite{Tamaschke1968}). Tamaschke introduces the
definition of the object we call a d-partition $\Pi $ of a group $G$, as
part of his definition of an \emph{S-semigroup} $T$ on $G$ (which is the
semigroup generated by the elements of $\Pi $), and offers interesting
extensions of basic concepts in group theory to S-semigroups. He also
identifies and uses the dioid structure induced by d-partitions, however,
and perhaps not surprisingly, without referring to the term dioid or to its
general definition. Additional points of overlap between Tamaschke's
discussion and ours are given in Remarks \ref{Rem_T2}, \ref{Rem_T3}, \ref%
{Rem_T1}, \ref{Rem_T4}. Interestingly, our study of 3-part d-partitions of
cyclic groups allows us to settle a question raised by Tamaschke in \cite[%
Problem 1.19]{Tamaschke1969} (see Remark \ref%
{Rem_d-partitionsWhichAreNotS-partitions}). We believe that reinterpreting
Tamaschke's theory of S-semigroups from the point of view of dioids might be
of interest in future research.
\end{remark}

The paper is organized as follows. Section \ref{Sect_Background} summarizes
some background concepts and results. Section \ref{Sect_D-partitionsOfGroups}%
\ presents several results on d-partitions, including the proof of Theorem %
\ref{Th_DioidOutOfd-partition}, and analogies between Schur dioids and Schur
rings. Section \ref{Sect_Inducing} contains the proofs of theorems \ref%
{Th_General_1d_connection}, \ref{Th_CoarsenessViaAction} and \ref%
{Th_LiftFromQuotient}. Finally, Section \ref{Sect_DPartitonsOfZp} presents
our results on $3$-part d-partitions of cyclic groups of prime order,
including the proof of Theorem \ref{Th_3PartIsoClassification}.

\section{Background Concepts and Results \label{Sect_Background}}

\subsection{Semirings and Dioids\label{Subsect_SemiAndDioids}}

\begin{definition}[{\protect\cite[Chapter 1]{GondranMinouxDioidBook2010}}]
\label{Def_Dioid}A triple $\left( D,\oplus ,\otimes \right) $, where $D$ is
a set and $\oplus ,\otimes $ are two binary operations over $D$, is a \emph{%
semiring} if the following conditions (a) and (b) hold.

\begin{enumerate}
\item[\textbf{(a)}] $\oplus $ is commutative and associative, $\otimes $ is
associative, and $\otimes $ is distributive over $\oplus $.

\item[\textbf{(b)}] $\oplus $ has a neutral element $\varepsilon $, $\otimes 
$ has a neutral element $e$, and $\varepsilon $ is absorbing for $\otimes $,
namely, for any $a\in D$, $a\otimes \varepsilon =\varepsilon \otimes
a=\varepsilon $.
\end{enumerate}

We shall assume $\varepsilon \neq e$, and when convenient use $0_{D}$ for $%
\varepsilon $ and $1_{D}$ for $e$, write $+$ for $\oplus $, use the sum
symbol $\Sigma $ and omit the $\otimes $ symbol.

The triple $\left( D,\oplus ,\otimes \right) $ is a \emph{dioid} if in
addition to (a) and (b) the following condition (c) is satisfied:

\begin{enumerate}
\item[\textbf{(c)}] Existence of order relation condition: The binary
relation $\leq _{D}$ defined over $D$ via:%
\begin{equation*}
\forall a,b\in D,~(a\leq _{D}b\text{ }\Longleftrightarrow \exists c\in
D,~a\oplus c=b)\text{,}
\end{equation*}%
is an order relation. In this case we say that $D$ is canonically ordered
with respect to $\oplus $.
\end{enumerate}

Furthermore, we say that $\leq _{D}$ is complete if every subset of elements
of $D$ has a supremum in $D$, and that $\leq _{D}$ is dually complete if
every subset of elements of $D$ has an infimum in $D$. The dioid $\left(
D,\oplus ,\otimes \right) $ is complete (dually complete) if $\leq _{D}$ is
complete (dually complete) and the sum operation $\oplus $ can be extended
to arbitrary multisets of arguments such that $\otimes $ distributes over
the extended sums.
\end{definition}

\begin{remark}
\label{Rem_AntiSymmetryIsEnough}If $\left( D,\oplus ,\otimes \right) $ is a
semiring then the relation $\leq _{D}$ over $D$ is reflexive and transitive.
Thus, we can replace condition (c) by the condition that $\leq _{D}$ is
anti-symmetric. Note that some authors reserve the term dioid to idempotent
semirings, namely, to semirings $\left( D,\oplus ,\otimes \right) $ which
satisfy the additional condition $a\oplus a=a$ for every $a\in D$. One can
easily check that this condition implies that\ $\leq _{D}$ is an order
relation. We will refer to these dioids as idempotent dioids.
\end{remark}

\begin{remark}
\label{Rem_DioidsAndRingsDoNotIntersect}A ring $\left( D,\oplus ,\otimes
\right) $ is a semiring which satisfies the extra condition (c'): For any $%
a\in D$ there exists $b\in D$ such that $a\oplus b=\varepsilon $ (i.e., $%
\left( D,\oplus \right) $ is a commutative group). Thus, both rings \ and
dioids are special semirings. However, using $\varepsilon \neq e$ assumed in
Definition \ref{Def_Dioid}, (c) and (c') are mutually exclusive \cite[%
Section 1.3.4, Theorem 1]{GondranMinouxDioidBook2010}.
\end{remark}

The Boolean dioid $\mathbb{B}:=\left( \left\{ \varepsilon ,e\right\} ,\oplus
,\otimes \right) $ is the smallest dioid. It consists of just the two
neutral elements $\varepsilon $ and $e$ where $e\oplus e=e$. It is
idempotent and complete.

\subsection{Group Semirings}

Consider the following natural generalization of the familiar concept of a
group ring over a commutative ring.

\begin{definition}
\label{Def_GroupSemiring}Let $G$ be a group with identity element $1_{G}$
and let $K$ be a commutative semiring with identity $1_{K}\neq 0_{K}$.
Assume further that either $G$ is finite or, if $G$ is infinite then $K$ is
a complete dioid. We define the \emph{group semiring} $K\left[ G\right] $ of 
$G$ to be the set of all formal sums $\tsum\limits_{g\in G}a_{g}g$ where $%
a_{g}\in K$ for all $g\in G$. We define a pointwise addition over $K\left[ G%
\right] $ by 
\begin{equation*}
\tsum\limits_{g\in G}a_{g}g+\tsum\limits_{g\in G}b_{g}g:=\tsum\limits_{g\in
G}\left( a_{g}+b_{g}\right) g\text{,}
\end{equation*}%
and a convolution product over $K\left[ G\right] $ by%
\begin{equation*}
\left( \tsum\limits_{g\in G}a_{g}g\right) \left( \tsum\limits_{g\in
G}b_{g}g\right) :=\tsum\limits_{g\in G}\tsum\limits_{h\in G}\left(
a_{h}b_{h^{-1}g}\right) g=\tsum\limits_{g\in G}\tsum\limits_{h\in G}\left(
a_{gh^{-1}}b_{h}\right) g\text{.}
\end{equation*}
\end{definition}

\begin{proposition}
\label{Prop_k[G]IsSemiAndD}Let $G$ and $K$ be as in Definition \ref%
{Def_GroupSemiring}. Then $K\left[ G\right] $ is a semiring. Moreover, if $K$
is a dioid then $K\left[ G\right] $ is a dioid, and it is complete if $K$ is
complete.
\end{proposition}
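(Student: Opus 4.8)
The plan is to verify the semiring axioms (a) and (b) of Definition \ref{Def_Dioid} directly, reducing every identity in $K\left[ G\right] $ to the corresponding identity in the coefficient semiring $K$, and then to upgrade to the dioid and completeness statements by showing that both the canonical order and arbitrary suprema on $K\left[ G\right] $ are computed coordinatewise.

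For the semiring structure, first I would identify the two neutral elements. The additive neutral element is $\varepsilon := \sum_{g} 0_K\, g$ and the multiplicative neutral element is $e := 1_K\, 1_G$, i.e. the formal sum whose coefficient at $1_G$ is $1_K$ and whose other coefficients are $0_K$; since $0_K \neq 1_K$ we have $\varepsilon \neq e$ as required. Commutativity and associativity of $\oplus $, and the fact that $\varepsilon $ is absorbing for $\otimes $, follow immediately by reading off each coefficient and applying the corresponding property of $K$. The computation $e \otimes x = x \otimes e = x$ uses only that $1_G$ is the sole group element $h$ with nonzero coefficient in $e$, so that the convolution collapses to multiplication by $1_K$. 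Associativity of $\otimes $ and distributivity of $\otimes $ over $\oplus $ are the only genuinely nontrivial checks; both reduce, after expanding the convolution formula, to associativity, commutativity, and distributivity in $K$ together with the group axioms used to reindex the summation over $G$.

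Next, for the dioid claim I would show that for $a = \sum_g a_g g$ and $b = \sum_g b_g g$ one has $a \leq_{K[G]} b$ if and only if $a_g \leq_K b_g$ for every $g \in G$. Indeed, if $a \oplus c = b$ for some $c = \sum_g c_g g$ then $a_g +_K c_g = b_g$ coordinatewise, giving $a_g \leq_K b_g$; conversely, if $a_g \leq_K b_g$ for all $g$, then choosing $c_g \in K$ with $a_g +_K c_g = b_g$ and setting $c := \sum_g c_g g \in K\left[ G\right] $ witnesses $a \leq_{K[G]} b$. With this coordinatewise description in hand, antisymmetry of $\leq_{K[G]}$ is immediate from antisymmetry of $\leq_K$: if $a_g \leq_K b_g$ and $b_g \leq_K a_g$ for all $g$ then $a_g = b_g$ for all $g$, so $a = b$. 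By Remark \ref{Rem_AntiSymmetryIsEnough} this suffices to conclude that $K\left[ G\right] $ is a dioid whenever $K$ is.

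Finally, for completeness, assuming $K$ is complete, I would define the supremum of a family $\{a^{(i)}\}_i \subseteq K\left[ G\right] $ coordinatewise by taking, in coordinate $g$, the supremum $\sup_i a^{(i)}_g$ that exists in $K$; the coordinatewise characterization of $\leq_{K[G]}$ shows at once that this is the supremum in $K\left[ G\right] $. Likewise, the extension of $\oplus $ to arbitrary multisets is defined coordinatewise using the extended sum in $K$, and distributivity of $\otimes $ over these extended sums is inherited from the corresponding distributivity in $K$. The step I expect to require the most care is not any single algebraic identity but the bookkeeping in the infinite case: when $G$ is infinite the convolution and the extended sums are genuinely infinite sums in $K$, and every reindexing, interchange of order of summation, and appeal to distributivity must be justified by the hypothesis that $K$ is a complete dioid over whose extended sums $\otimes $ distributes. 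In the finite case all of these sums are finite and the verifications are entirely elementary.
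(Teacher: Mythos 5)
Your proof is correct and follows essentially the same route as the paper's: the semiring axioms are checked as a routine coordinatewise verification with the same two neutral elements, and the dioid claim is reduced via Remark \ref{Rem_AntiSymmetryIsEnough} to anti-symmetry of $\leq_{K[G]}$, which is deduced coordinatewise from anti-symmetry of $\leq_K$ exactly as in the paper. The only difference is one of detail: the paper leaves the completeness claim to the reader, whereas you spell out the coordinatewise suprema and extended sums, correctly flagging that the infinite-$G$ case needs the completeness of $K$ to justify the convolution sums and their reindexing.
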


\begin{proof}
It is routine to check that $K\left[ G\right] $ satisfies the semiring
axioms with respect to the two operations defined. The element $%
\tsum\limits_{g\in G}0_{K}g$ is the neutral element with respect to the
addition operation and the element $1_{K}1_{G}+\tsum\limits_{g\in
G\backslash \left\{ 1_{G}\right\} }0_{K}g$ is the neutral element with
respect to the multiplication operation. Of course, one can omit terms with $%
0_{K}$ and identify the subset $\left\{ s1_{G}|s\in K\right\} \subseteq K%
\left[ G\right] $ with $K$ and the subset $\left\{ 1_{K}g|g\in G\right\}
\subseteq K\left[ G\right] $ with $G$.

In order to prove that $K\left[ G\right] $ is a dioid whenever $K$ is, it
remains to prove the existence of a canonical order relation (condition (c)
of Definition \ref{Def_Dioid}). We abuse notation and write $+$ for the
addition operation over $K$ and over $K\left[ G\right] $. Define a binary
relation $\leq _{K\left[ G\right] }$ over $K\left[ G\right] $ by 
\begin{equation*}
\forall a,b\in K\left[ G\right] ,~(a\leq _{K\left[ G\right] }b\text{ }%
\Longleftrightarrow \exists c\in K\left[ G\right] ,~a+c=b)\text{.}
\end{equation*}%
By Remark \ref{Rem_AntiSymmetryIsEnough} it suffices to show that $\leq _{K%
\left[ G\right] }$ is anti-symmetric. Let:%
\begin{equation*}
a:=\tsum\limits_{g\in G}a_{g}g\text{, }b:=\tsum\limits_{g\in G}b_{g}g\text{, 
}c:=\tsum\limits_{g\in G}c_{g}g\text{, }d:=\tsum\limits_{g\in G}d_{g}g\text{.%
}
\end{equation*}%
We have to prove that if $a+c=b$ and $b+d=a$ then $a=b$. From $a+c=b$ we get 
$a_{g}+c_{g}=b_{g}$, $\forall g\in G$. Similarly, $b+d=a$ implies $%
b_{g}+d_{g}=a_{g}$, $\forall g\in G$. Since $K$ is a dioid, the first
equality implies $a_{g}\leq _{K}b_{g}$, $\forall g\in G$, and the second
equality implies $b_{g}\leq _{K}a_{g}$, $\forall g\in G$. Since $\leq _{K}$
is anti-symmetric, $a_{g}=b_{g}$, $\forall g\in G$ and this implies $a=b$.
We leave the verification of the completeness claim to the reader.
\end{proof}

\begin{notation}
Let $G$ be a group and $K$ be a commutative semiring which is a complete
dioid if $G$ is not finite. For any non-empty subset $S$ of $G$ define $%
\underline{S}:=$ $\tsum\limits_{g\in S}g$, and for any collection $\mathcal{S%
}$ of non-empty subsets of $G$ define $\underline{\mathcal{S}}:=\left\{ 
\underline{S}|S\in \mathcal{S}\right\} $.
\end{notation}

\begin{definition}
Let $G$ be a group and $K$ be a commutative semiring which is a complete
dioid if $G$ is not finite. Let $\mathcal{A}$ be a subsemiring of the group
semiring $K\left[ G\right] $. We shall say that $B\subseteq \mathcal{A}$ is
a \emph{basis} of $\mathcal{A}$ over $K$ if each element of $\mathcal{A}$
can be uniquely written as $\tsum\limits_{b\in B}a_{b}b$ where $a_{b}\in K$
for all $b\in B$. For example, $G$ is a basis of $K\left[ G\right] $ over $K$%
.
\end{definition}

\subsection{Schur Rings and S-partitions\label{Subsect_SRingsAndPartitions}}

In the special case of Definition \ref{Def_GroupSemiring}, where $K$ is a
ring and $G$ is a finite group, we obtain the usual group ring $K\left[ G%
\right] $. Schur rings, introduced by Schur in 1933 \cite{Schur1933}, are
special subrings of the group ring $K\left[ G\right] $, defined as follows.

\begin{definition}[{\hspace{0.02in}\negthinspace \noindent \protect\cite[%
Section 2, Definition 2.1]{MuzychukPonomarenko2009}}]
\label{Def_SchurRing}Let $K$ be a ring and let $G$ be a finite group. A
subring $\mathcal{A}$ of $K\left[ G\right] $ is a \emph{Schur ring} over $G$
if there exists a partition $\mathcal{S}$ of $G$ which has the following
properties:

a. The set $\underline{\mathcal{S}}$ is a basis of $\mathcal{A}$ over $K$.

b. $\left\{ 1_{G}\right\} \in \mathcal{S}$

c. $X^{-1}\in \mathcal{S}$ for all $X\in \mathcal{S}$.

We shall call a partition $\mathcal{S}$ which has the properties (a)-(c), an 
\emph{S-partition}.
\end{definition}

Note that in the original Definition 2.1 of \cite{MuzychukPonomarenko2009} $%
K=\mathbb{C}$.

Let $\mathcal{A}$ be a Schur ring over a finite group $G$, with an
associated S-partition $\mathcal{S}=\left\{ S_{1},...,S_{h}\right\} $
(Definition \ref{Def_SchurRing} with $K=\mathbb{C}$ for definiteness). For
any $1\leq i,j\leq h$ the product $\underline{S_{i}}\underline{S_{j}}$ can
be expressed as a linear combination of the elements of $\underline{\mathcal{%
S}}$, as the latter is a basis of $\mathcal{A}$. It follows, from the group
multiplication law, that the coefficients of the basis elements are
non-negative integers. Writing%
\begin{equation*}
\underline{S_{i}}\underline{S_{j}}=\tsum\limits_{k=1}^{h}s_{ij}^{k}%
\underline{S_{k}}\text{,}
\end{equation*}%
we refer to the $h^{3}$ coefficients $s_{ij}^{k}$\ as the\ \emph{structure
constants} of the Schur ring. Note that since $\mathbb{Z}\subseteq \mathbb{C}%
\subseteq \mathcal{A}$, we can think of the structure constants as elements
of the Schur ring.

\begin{remark}
\label{Rem_s-ringDefWithoutC}One can reformulate the definition of an
S-partition in terms of a key property of the structure constants $%
s_{ij}^{k} $ by replacing condition (a) of Definition \ref{Def_SchurRing}
with (a'): For any $1\leq i,j,k\leq h$ and any $z\in S_{k}$, the number of
distinct pairs $\left( x,y\right) \in S_{i}\times S_{j}$ such that $xy=z$
depends only on $\left( i,j,k\right) $ and is independent of the specific
choice of $z$. Note that this number equals $s_{ij}^{k}$, and that this
version of the definition does not involve the explicit introduction of a
group ring.
\end{remark}

\subsection{Finite Cyclic Groups}

Let $n$ be a positive integer. We will work with the realization of a cyclic
group of order $n$ as $G:=\left( \mathbb{Z}_{n},+_{n}\right) $, where $%
\mathbb{Z}_{n}:=\left\{ 0,1,...,n-1\right\} $ and $+_{n}$ denotes addition
modulo $n$. Frequently we write $+$ for $+_{n}$. Recall that $Aut\left(
G\right) =\left( \mathbb{Z}_{n}^{\ast },\cdot _{n}\right) $, where $\mathbb{Z%
}_{n}^{\ast }=\left\{ a\in \mathbb{Z}_{n}|\gcd \left( a,n\right) =1\right\} $%
, and $\cdot _{n}$ denotes multiplication modulo $n$. Using this
presentation, any $A\leq Aut\left( G\right) $ acts on $G$ by $\cdot _{n}$
multiplication.

For any prime $p$ and any multiplicative subgroup $A$ of $\left( \mathbb{Z}%
_{p}^{\ast },\cdot _{p}\right) $ we set $\Pi \left( A\right) :=\left\{
\left\{ 0\right\} \right\} \cup \left\{ Ax|x\in \mathbb{Z}_{p}^{\ast
}\right\} $ - the partition of the additive group $G:=\left( \mathbb{Z}%
_{p},+_{p}\right) $ whose non-trivial parts are all the cosets of the
multiplicative subgroup $A\leq \left( \mathbb{Z}_{p}^{\ast },\cdot
_{p}\right) $. Note that $\Pi \left( A\right) $ is not a coset partition of $%
G$ in the sense of the Example \ref{Example_Classical}. We prove that $\Pi
\left( A\right) $ is an S-partition of $G$. By Example \ref%
{Example_ZnMultiplicative}, $\Pi \left( A\right) $ is a d-partition of $G$,
and hence it remains to verify property (a') of Remark \ref%
{Rem_s-ringDefWithoutC}. Let $k$ be the number of distinct cosets of $A$ in $%
\mathbb{Z}_{p}^{\ast }$. Let $i,j,l\in \left\{ 1,...,k\right\} $ and let $%
h,g\in Ax_{l}$. Note that there exists a unique $a\in A$ such that $h=ag$.
Suppose that there are exactly $m_{g}$ distinct pairs $\left( u,v\right) \in
Ax_{i}\times Ax_{j}$ such that $u+v=g$. Then, the $m_{g}$ distinct pairs $%
\left( au,av\right) \in Ax_{i}\times Ax_{j}$ satisfy $au+av=a\left(
u+v\right) =ag=h$. It follows that $m_{g}\leq m_{h}$ and by symmetry $%
m_{g}=m_{h}$.

Gordon gave in \cite{BGordonPacific1964} an elegant proof that the
partitions $\Pi \left( A\right) $ exhaust all of the S-partitions of $\left( 
\mathbb{Z}_{p},+_{p}\right) $. Thus we have a complete classification of
S-partitions of cyclic groups of prime order (see \cite%
{MuzychukPonomarenko2009} for generalizations):

\begin{theorem}[\protect\cite{BGordonPacific1964}]
\label{Th_BGordon}Let $p$ be a prime. If $A$ is a multiplicative subgroup of 
$\left( \mathbb{Z}_{p}^{\ast },\cdot _{p}\right) $, then $\Pi \left(
A\right) $ is an S-partition of $\left( \mathbb{Z}_{p},+_{p}\right) $.
Conversely, if $\Pi $ is an S-partition of $\left( \mathbb{Z}%
_{p},+_{p}\right) $ then $\Pi =$ $\Pi \left( A\right) $ for some $A\leq 
\mathbb{Z}_{p}^{\ast }$.
\end{theorem}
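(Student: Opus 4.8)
The forward implication is already in hand from the discussion preceding the statement: it was shown there that $\Pi(A)$ is a $d$-partition (Example \ref{Example_ZnMultiplicative}) and that it satisfies property (a') of Remark \ref{Rem_s-ringDefWithoutC}, and these two facts together are exactly the requirements for $\Pi(A)$ to be an S-partition. So I would spend no further effort on the first assertion and devote the proof to the converse.

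For the converse, let $\Pi=\{\{0\},D_1,\dots,D_k\}$ be an arbitrary S-partition of $G=(\mathbb{Z}_p,+_p)$ and let $\mathcal{A}\subseteq\mathbb{C}[G]$ be the associated Schur ring, with $\mathbb{C}$-basis the indicators $\underline{D_0},\dots,\underline{D_k}$ (where $D_0=\{0\}$). The plan is to transport the problem to the character side via the discrete Fourier transform, using two features of $\mathcal{A}$. First, since $G$ is abelian, $\mathcal{A}$ is a commutative semisimple algebra under the convolution product $\otimes$. Second, $\mathcal{A}$ is also closed under the Hadamard (pointwise) product, because the basis vectors are indicators of disjoint parts, hence Hadamard-orthogonal idempotents summing to the all-ones vector. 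As the Fourier transform interchanges $\otimes$ with the pointwise product, it carries $\mathcal{A}$ to a subalgebra $\widehat{\mathcal{A}}$ that is again closed under both products; that is, $\widehat{\mathcal{A}}$ is the Schur ring of a \emph{dual} S-partition $\mathcal{T}=\{\{0\},T_1,\dots,T_k\}$ of the character group, which we identify with $\mathbb{Z}_p$. Concretely, writing $\omega=e^{2\pi i/p}$ and letting $\eta_i(t):=\sum_{x\in D_i}\omega^{tx}$ denote the value at $t$ of the Fourier transform of $\underline{D_i}$, the dual classes $T_j$ are precisely the level sets of the map $t\mapsto(\eta_0(t),\dots,\eta_k(t))$.

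The next step, and the point at which primality is essential, is to identify the dual classes explicitly. The Galois group $\mathrm{Gal}(\mathbb{Q}(\omega)/\mathbb{Q})\cong(\mathbb{Z}_p^{\ast},\cdot_p)$ acts on these cyclotomic periods by $\sigma_a(\eta_i(t))=\eta_i(at)$, so if $s$ and $t$ lie in the same dual class then so do $as$ and $at$ for every $a\in\mathbb{Z}_p^{\ast}$. Since $p$ is prime, \emph{every} nonzero element of $\mathbb{Z}_p$ is invertible, so choosing $a=t^{-1}$ shows that $s$ and $t$ being in the same class forces $t^{-1}s$ to be in the class of $1$. It follows that the dual class $B$ of $1$ is a subgroup of $\mathbb{Z}_p^{\ast}$ and that every nonzero dual class is a coset of $B$; in other words $\mathcal{T}=\Pi(B)$. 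This is exactly the step that breaks down for composite $n$, where residues sharing a factor with $n$ are not units.

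Finally I would dualize back. A direct computation of the Fourier transforms of the parts of $\Pi(B)$ shows that the Gaussian periods of the subgroup $B$ take distinct values on distinct cosets of $B$ (again because $p$ is prime), so $\Pi(B)$ is self-dual. Since the passage to the dual S-partition is an involution, and the dual of $\Pi$ is $\mathcal{T}=\Pi(B)$ while the dual of $\Pi(B)$ is $\Pi(B)$ itself, we conclude $\Pi=\Pi(B)$, which is the converse with $A=B$. I expect the main obstacle to be the duality infrastructure of the middle paragraphs, namely verifying carefully that $\mathcal{A}$ is closed under both products and that applying the transform twice recovers the original partition, so that the clean Galois argument can be run on the dual side; once this is set up, primality supplies the two decisive inputs almost for free, by making all nonzero residues units and forcing the Gaussian periods to separate the cosets of $B$.
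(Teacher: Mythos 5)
Your treatment of the forward direction coincides with the paper's: the discussion preceding Theorem \ref{Th_BGordon} establishes that $\Pi(A)$ is a d-partition (Example \ref{Example_ZnMultiplicative}) and verifies property (a') of Remark \ref{Rem_s-ringDefWithoutC}, and that is all the paper does there. For the converse, however, the paper contains no proof at all --- it simply cites Gordon \cite{BGordonPacific1964} --- so your Fourier--Galois argument is a genuinely different (and self-contained) route, and I believe it is sound. The infrastructure you flagged as the main obstacle does go through by standard arguments: $\mathcal{A}$ is Hadamard-closed because the $\underline{D_i}$ are indicators of disjoint sets summing to the all-ones vector; the transform $\widehat{\mathcal{A}}$ is then a unital subalgebra of $\mathbb{C}^{\mathbb{Z}_p}$ under the pointwise product, and any such subalgebra is automatically reduced (no nilpotents exist in $\mathbb{C}^{\mathbb{Z}_p}$), hence semisimple, hence spanned by the indicators of the blocks of a partition, and these blocks are exactly the common level sets of the functions $\eta_i$ --- so the dual partition $\mathcal{T}$ is well defined without further hypotheses. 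Your Galois step is correct ($\sigma_a$ preserves equalities among the periods, so $t\mapsto at$ permutes level sets, and invertibility of every nonzero residue makes the class $B$ of $1$ a subgroup with the other nonzero classes its cosets), and the separation of cosets by Gaussian periods follows from the linear independence of $\{\omega^x : x\in\mathbb{Z}_p^{\ast}\}$ over $\mathbb{Q}$ (irreducibility of the $p$-th cyclotomic polynomial, again primality), since distinct cosets are disjoint subsets of an independent set. Two small points deserve explicit mention in a full write-up: first, that $\{0\}$ is a level set on its own (if $t\neq 0$ satisfied $\eta_i(t)=\left\vert D_i\right\vert$ for all $i$, summing over $i$ would give $0=p$), which is needed before writing $\mathcal{T}=\Pi(B)$; second, that the involution property of duality uses Fourier inversion together with the inverse-closure of the S-partition (the double transform returns $\underline{-D_i}$ up to the scalar $p$, and $-D_i\in\Pi$). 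In comparison: the paper's route buys brevity by outsourcing the hard direction, while yours buys a complete proof that moreover isolates exactly the two places where primality is decisive.
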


The following lemma provides a useful sufficient condition for a d-partition
of a prime order group to be an S-partition.

\begin{lemma}
\label{Lem_ZpSingletonOrDoubleImplyMult}Let $p$ be a prime and let $%
G:=\left( \mathbb{Z}_{p},+_{p}\right) $. Let $\Pi $ be a d-partition of $G$,
with $e_{\Pi }=\left\{ 0\right\} $. The following two statements hold:

1. If there exists $\pi \in \Pi \backslash \left\{ e_{\Pi }\right\} $ such
that $\left\vert \pi \right\vert =1$ then $\Pi =\Pi \left( \left\{ 1\right\}
\right) $ (the singleton partition of $G$).

2. If for all $\pi \in \Pi \backslash \left\{ e_{\Pi }\right\} $ we have $%
\left\vert \pi \right\vert \geq 2$, and there exists $\pi \in \Pi $ such
that $\left\vert \pi \right\vert =2$, then $\Pi =\Pi \left( \left\{
-1,1\right\} \right) $.
\end{lemma}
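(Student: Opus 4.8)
The plan is to exploit the closure and inverse properties in order to \emph{generate} parts by repeatedly multiplying a small part by itself, using the fact that any nonzero element of $\mathbb{Z}_p$ generates the additive group. Throughout I work additively, so that the inverse property reads $-\pi \in \Pi$ whenever $\pi \in \Pi$, and I first note that the hypotheses of statement 2 force $p$ to be odd: for $p=2$ a two-element subset of $\mathbb{Z}_2$ must contain $0$ and so cannot be a part distinct from $e_{\Pi}=\{0\}$. Oddness of $p$ is what guarantees $-x \neq x$ for every nonzero $x$, which I will use repeatedly.

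For statement 1, suppose $\pi = \{a\} \in \Pi$ with $a \neq 0$. Applying closure to $\pi \otimes \pi$ shows that $\{2a\}$ is a union of parts, hence a single part since it is a singleton; iterating with $\{ka\}\otimes\{a\}$ gives that $\{ka\}$ is a part for every $k \geq 1$. As $a \neq 0$ and $p$ is prime, $a$ generates $(\mathbb{Z}_p,+)$, so every singleton $\{x\}$ with $x \in \mathbb{Z}_p$ occurs as a part. Since $\Pi$ is a partition, these are all of its parts, i.e. $\Pi = \Pi(\{1\})$.

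For statement 2 I first manufacture a \emph{symmetric} part of size $2$, because the given part need not be symmetric a priori. Let $\pi = \{a,b\}$ be a part with $a \neq b$, both nonzero (nonzero because $0$ lies only in the part $\{0\}=e_{\Pi}$). By the inverse property $-\pi \in \Pi$, so by closure the product $\pi \otimes (-\pi) = \{0, a-b, b-a\}$ is a union of parts containing $\{0\}$; removing $\{0\}$ leaves the two-element set $\{c,-c\}$ with $c := a-b \neq 0$, and since every non-identity part has size at least $2$, this must be a single part $\sigma$. Writing $S_k := \{kc,-kc\}$ (so $S_0=\{0\}$ and $S_1=\sigma$), I will prove by induction on $k$ that $S_1, S_2, \ldots, S_{(p-1)/2}$ are all parts: the product $S_k \otimes \sigma$ unfolds, via the group law, to $S_{k+1} \cup S_{k-1}$, and since $S_{k-1}$ is already known to be a part while the whole product is a union of parts, the remaining two-element set $S_{k+1}$ is forced to be a single part.

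To finish, observe that $\{0\}, S_1, \ldots, S_{(p-1)/2}$ are pairwise disjoint and their sizes sum to $p$, so they exhaust $\mathbb{Z}_p$ and must therefore constitute all of $\Pi$; as these are precisely the cosets of the multiplicative subgroup $\{-1,1\}$ together with $\{0\}$, we conclude $\Pi = \Pi(\{-1,1\})$. The main technical obstacle is the inductive step: one must check, using that $p$ is an odd prime and that $1 \leq k \leq \frac{p-1}{2}$, that the four elements $\pm(k+1)c,\ \pm(k-1)c$ are genuinely distinct, so that the product really splits as the \emph{disjoint} union $S_{k+1} \sqcup S_{k-1}$ and $S_{k+1}$ indeed has full size $2$ and avoids $0$. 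This is exactly where the arithmetic modulo $p$ and the avoidance of wraparound must be handled with care.
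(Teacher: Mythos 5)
Your proof is correct and takes essentially the same approach as the paper's: statement 1 by iterating the singleton, and statement 2 by first extracting a symmetric two-element part $\{c,-c\}$ from $\pi \otimes (-\pi)$ and then inducting via $S_k \otimes \sigma = S_{k+1} \cup S_{k-1}$, which is exactly the paper's relation $\pi + \pi_{k-1} = \pi_{k-2} \cup \pi_k$. The only cosmetic difference is that the paper splits into the cases $\pi = -\pi$ and $\pi \neq -\pi$ while you handle both uniformly, and the routine mod-$p$ disjointness checks you flag at the end are precisely what the paper dismisses as an ``easy induction.''
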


\begin{proof}
1. Assume that $\pi \in \Pi \backslash \left\{ e_{\Pi }\right\} $ satisfies $%
\left\vert \pi \right\vert =1$. Then $\pi =\left\{ x\right\} $ for some $%
x\in \mathbb{Z}_{p}^{\ast }$, and we have, for any positive integer $k$, 
\begin{equation*}
\underset{k\text{ summands}}{\underbrace{\pi +\cdots +\pi }}=\left\{ k\cdot
_{p}x\right\} \text{.}
\end{equation*}%
Since $x$ generates $G$ the claim follows.

2. Let $\pi \in \Pi $ be such that $\left\vert \pi \right\vert =2$. Then $%
\pi =\left\{ x_{1},x_{2}\right\} $ with $x_{1}\neq x_{2}\in \mathbb{Z}%
_{p}^{\ast }$. Since $\Pi $ satisfies the inverse property, either $\pi
=-\pi $, in which case $x_{2}=-x_{1}$, or $-\pi \in \Pi $ is disjoint from $%
\pi $. In this second case, we get $\pi +\left( -\pi \right) =e_{\Pi }\cup
\left\{ -\left( x_{2}-x_{1}\right) ,x_{2}-x_{1}\right\} $, and by our
assumptions we must have $\left\{ -\left( x_{2}-x_{1}\right)
,x_{2}-x_{1}\right\} \in \Pi $. Hence we can assume, without loss of
generality, that $\pi =\left\{ -x,x\right\} $ for some $x\in \mathbb{Z}%
_{p}^{\ast }$. Now set $\pi _{k}:=k\cdot _{p}\pi $ for $k\in \mathbb{Z}%
_{p}^{\ast }$, and $\pi _{0}:=e_{\Pi }$. Observe that $\pi _{1}=\pi $ and
that for any $k>1$, we have $\pi +\pi _{k-1}=\pi _{k-2}\cup \pi _{k}$. This
yields, by an easy induction, that every non-identity part of $\Pi $ is of
the form $\left\{ -\left( k\cdot _{p}x\right) ,k\cdot _{p}x\right\} $ which
gives our claim.
\end{proof}

We need the following standard lemma, whose easy proof is omitted, for the
ensuing corollary.

\begin{lemma}
\label{Lem_NegatingA}Let $p$ be a prime and let $A\leq \mathbb{Z}_{p}^{\ast
} $. Then:

\begin{enumerate}
\item[a.] Either $A=-A$, or $-A$ is some non-trivial multiplicative coset of 
$A$.

\item[b.] The following conditions are equivalent:

\begin{enumerate}
\item[(b1)] $A=-A$.

\item[(b2)] $-1\in A$.

\item[(b3)] $\left\vert A\right\vert $ is even.
\end{enumerate}
\end{enumerate}
\end{lemma}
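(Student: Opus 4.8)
The plan is to reduce everything to two standard facts about the cyclic group $\mathbb{Z}_p^*$: that $-A$ is literally a multiplicative coset of $A$, and that $-1$ is the unique involution of $\mathbb{Z}_p^*$ when $p$ is odd. (For $p=2$ the group $\mathbb{Z}_2^*$ is trivial and $-1=1$, so (b3) fails while (b1) and (b2) hold; the intended reading, and the only case relevant to the sequel, is $p$ odd, which I would state as a standing assumption.)

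First I would prove part (a). Writing $-A=\left\{ -a\mid a\in A\right\} =(-1)\cdot _{p}A$, I observe that $-A$ is exactly the coset of $A$ containing $-1$ (left and right coincide, since $\mathbb{Z}_p^*$ is abelian). Two cosets of a subgroup are either equal or disjoint, so $-A$ is either $A$ itself or a coset disjoint from $A$, that is, a non-trivial multiplicative coset of $A$. This is precisely the dichotomy asserted in (a).

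Next I would handle the equivalences in (b). The equivalence (b1)$\Leftrightarrow$(b2) is immediate from the coset description: $(-1)A=A$ holds if and only if $-1\in A$, which is the standard criterion for a coset to coincide with its subgroup. For (b2)$\Leftrightarrow$(b3) I would invoke cyclicity of $\mathbb{Z}_p^*$. Since $p$ is odd, $-1$ has order $2$ and is the unique element of order $2$ in $\mathbb{Z}_p^*$ (a cyclic group has at most one subgroup, hence at most one element, of each order dividing its order). If $-1\in A$ then Lagrange's theorem forces $2\mid \left\vert A\right\vert $. Conversely, $A$ is itself cyclic as a subgroup of a cyclic group, so if $\left\vert A\right\vert $ is even it contains an element of order $2$; but the only such element of $\mathbb{Z}_p^*$ is $-1$, whence $-1\in A$.

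There is no real obstacle here, as the lemma is elementary; the only points requiring care are the degenerate case $p=2$ and the assertion that $-1$ is the unique involution of $\mathbb{Z}_p^*$, which rests on the uniqueness of a subgroup of each order in a cyclic group.
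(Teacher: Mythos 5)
Your proof is correct. Note that the paper itself omits the proof of this lemma, calling it "standard" and "easy," so there is no authorial argument to compare against; your route — identifying $-A$ as the coset $\left( -1\right) \cdot _{p}A$, invoking the equal-or-disjoint dichotomy for cosets, and reducing (b2)$\Leftrightarrow$(b3) to the uniqueness of the involution in the cyclic group $\mathbb{Z}_{p}^{\ast }$ together with Lagrange and Cauchy (or cyclicity of $A$) — is exactly the standard argument the authors presumably had in mind. Your remark about $p=2$ is a genuine, if minor, refinement of the statement: for $p=2$ one has $A=\left\{ 1\right\} $, so (b1) and (b2) hold while (b3) fails, and thus the lemma implicitly requires $p$ odd; this is harmless for the paper, since the ensuing corollary and its applications only involve odd primes, but stating the assumption explicitly, as you do, is the more careful formulation.
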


\begin{corollary}
\label{Coro_MultiplicativePartitionsUnderInverse}Let $p$ be a prime and let $%
A\leq \mathbb{Z}_{p}^{\ast }$. If $A=-A$ then $\pi =-\pi $ for all $\pi \in
\Pi \left( A\right) $, and if $A\neq -A$ then for every$\ \pi \in \Pi \left(
A\right) \backslash \left\{ \left\{ 0\right\} \right\} $ there exists $\pi
^{\prime }\neq \pi $ such that $\pi =-\pi ^{\prime }$ . In particular, if $%
\Pi \left( A\right) =\left\{ \pi _{0},\pi _{1},\pi _{2}\right\} $ then $%
p\equiv 1\left( \func{mod}4\right) $ implies $\pi =-\pi $ for all $\pi \in
\Pi \left( A\right) $ and $p\equiv 3\left( \func{mod}4\right) $ implies $\pi
_{1}=-\pi _{2}$.
\end{corollary}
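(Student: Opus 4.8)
The plan is to reduce the whole statement to one elementary observation about how negation acts on multiplicative cosets, and then feed it through Lemma~\ref{Lem_NegatingA}. The observation is that for any non-trivial part $\pi=Ax$ of $\Pi\left(A\right)$ (with $x\in\mathbb{Z}_p^{\ast}$) one has $-\pi=-\left(Ax\right)=\left(-A\right)x$, while the trivial part satisfies $-\left\{0\right\}=\left\{0\right\}$ for free. Thus the dichotomy $A=-A$ versus $A\neq-A$ supplied by Lemma~\ref{Lem_NegatingA}(a) dictates the behaviour of every part at once, and I will take $\pi_0=\left\{0\right\}$ throughout.

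First I would dispatch the case $A=-A$: here $-\left(Ax\right)=\left(-A\right)x=Ax$ for every coset, so combined with $-\left\{0\right\}=\left\{0\right\}$ every part of $\Pi\left(A\right)$ is self-inverse, which is the first assertion. For the case $A\neq-A$, Lemma~\ref{Lem_NegatingA}(a) gives that $-A$ is a non-trivial coset of $A$, say $-A=Ab$ with $b\notin A$. Then a non-trivial part $\pi=Ax$ satisfies $-\pi=\left(-A\right)x=A\left(bx\right)$, so $\pi':=A\left(bx\right)$ is a part with $\pi=-\pi'$; it is distinct from $\pi$ because $A\left(bx\right)=Ax$ would force $b\in A$, contrary to $b\notin A$. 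This is the second assertion.

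For the ``in particular'' claim, suppose $\Pi\left(A\right)=\left\{\pi_0,\pi_1,\pi_2\right\}$ has exactly three parts. Since the non-trivial parts of $\Pi\left(A\right)$ are exactly the multiplicative cosets of $A$ in $\mathbb{Z}_p^{\ast}$, having precisely two of them means $\left[\mathbb{Z}_p^{\ast}:A\right]=2$, i.e.\ $\left|A\right|=\frac{p-1}{2}$. If $p\equiv1\pmod4$ then $\frac{p-1}{2}$ is even, so Lemma~\ref{Lem_NegatingA}(b) gives $A=-A$ and the first assertion yields $\pi=-\pi$ for all parts; if $p\equiv3\pmod4$ then $\frac{p-1}{2}$ is odd, so $A\neq-A$. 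In the latter case the second assertion applied to $\pi_1$ produces a part $\pi'\neq\pi_1$ with $\pi_1=-\pi'$, and $\pi'\neq\pi_0$ because $\pi'=\pi_0=\left\{0\right\}$ would give $\pi_1=-\left\{0\right\}=\left\{0\right\}=\pi_0$, contradicting the distinctness of the parts. Hence $\pi'=\pi_2$ and $\pi_1=-\pi_2$.

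I expect no real difficulty, since the corollary is basically bookkeeping built on Lemma~\ref{Lem_NegatingA}. The only step that needs genuine care is the last one: one must correctly convert the residue of $p$ modulo $4$ into the parity of $\left|A\right|=\frac{p-1}{2}$, and then rule out the pairing $\pi_1=-\pi_0$ so that $\pi_1=-\pi_2$ is the only remaining possibility.
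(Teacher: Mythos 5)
Your proof is correct and takes exactly the route the paper intends: the paper states Lemma \ref{Lem_NegatingA} explicitly ``for the ensuing corollary'' and leaves the deduction unwritten, and your argument (negation acts on the parts $Ax$ as multiplication by the coset $-A$, distinctness via $b\notin A$, and the conversion of $p\bmod 4$ into the parity of $\left\vert A\right\vert =\frac{p-1}{2}$ fed into Lemma \ref{Lem_NegatingA}(b)) is precisely that deduction, with the small care points (e.g.\ ruling out $\pi'=\pi_{0}$) handled correctly.
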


\subsection{Additive Combinatorics\label{Subsect_AdditiveCombResults}}

Following the terminology of \cite{TaoVuAdditive2007}, an additive set is a
non-empty subset $S$ of some abelian group $\left( G,+\right) $ written
additively. Given two additive sets $A$ and $B$ with the same underlying
group, define $A+B:=\left\{ a+b|a\in A,b\in B\right\} $, and $-A:=\left\{
-a|a\in A\right\} $. We shall say that $A$ is symmetric if $A=-A$. The set
difference of two sets $A$ and $B$ will be denoted $A\backslash B$ and
should not be confused with the arithmetic difference $A-B:=A+\left(
-B\right) $. If $A$ is a subset of $G$ and $G$ is clear from context, we
write $\overline{A}$ for $G\backslash A$. For any integers $a\leq b$, the
(discrete) closed interval $\left[ a,b\right] $ is the integer subset $%
\left\{ a\leq x\leq b|x\in \mathbb{Z}\right\} $, and we use this notation
also for $a,b\in \mathbb{Z}_{n}$ when $0\leq a\leq b\leq n-1$. An additive
subset $S$ of an additive group $\left( G,+\right) $ is called an arithmetic
progression with step $\delta $ (or, in short, a $\delta $-progression) if
there exist $a_{0},\delta \in G$ and $N\in \mathbb{N}_{0}$ such that $%
S=\left\{ a_{0}+j\cdot \delta |j\in \mathbb{N}_{0},~0\leq j\leq N\right\}
=a_{0}+\left[ 0,N\right] \cdot \delta $. Note that we have the easy addition
formula for two $\delta $-progressions:%
\begin{equation*}
\left( a_{0}+\left[ 0,N_{1}\right] \cdot \delta \right) +\left( b_{0}+\left[
0,N_{2}\right] \cdot \delta \right) =\left( a_{0}+b_{0}+\left[ 0,N_{1}+N_{2}%
\right] \cdot \delta \right) \text{.}
\end{equation*}

\begin{theorem}[Cauchy-Davenport Inequality, \protect\cite{Cauchy1813},%
\protect\cite{Davenport1935},\protect\cite{TaoVuAdditive2007}]
\label{Th_CauchyDavenport}Let $p$ be a prime and let $G=\left( \mathbb{Z}%
_{p},+_{p}\right) $. For any two additive subsets $A,B\subseteq G$ we have%
\begin{equation*}
\left\vert A+B\right\vert \geq \min \left( \left\vert A\right\vert
+\left\vert B\right\vert -1,p\right) \text{.}
\end{equation*}
\end{theorem}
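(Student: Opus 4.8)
The plan is to give a self-contained, elementary proof by the classical Davenport transform, reducing everything to the pigeonhole principle and the primality of $p$. First I would dispose of the easy range: if $|A| + |B| > p$, then for every $g \in G$ the sets $A$ and $g - B$ have sizes summing to more than $p = |G|$, so they must intersect; hence $g \in A + B$, giving $A + B = G$ and $|A+B| = p = \min(|A|+|B|-1, p)$. So I may assume $|A| + |B| \le p$, in which case the claimed bound reads $|A+B| \ge |A| + |B| - 1$. Since the statement is symmetric in $A$ and $B$, I also assume $1 \le |A| \le |B|$ and induct on $\min(|A|,|B|) = |A|$.

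The base case $|A| = 1$ is immediate: for $A = \{a\}$ we get $|A+B| = |a + B| = |B| = |A| + |B| - 1$. For the inductive step ($|A| \ge 2$) the decisive ingredient is a translate $B' := B + t$ of $B$ for which $A \cap B'$ is a nonempty proper subset of $A$. Granting such a $B'$, set $A' := A \cap B'$ and $A'' := A \cup B'$, so that $|A'| + |A''| = |A| + |B|$ and $1 \le |A'| < |A| \le |B| \le |A''|$. One checks directly that $A' + A'' \subseteq A + B'$ (splitting on whether the second coordinate lies in $A$ or in $B'$), while $|A + B'| = |A + B|$ because $B'$ is a translate of $B$. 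Since the smaller of $A', A''$ is $A'$ with $|A'| < |A|$, and $|A'|+|A''| = |A|+|B| \le p$, the inductive hypothesis applies to the pair $(A', A'')$ and yields
\[
|A + B| = |A + B'| \ge |A' + A''| \ge |A'| + |A''| - 1 = |A| + |B| - 1,
\]
completing the induction.

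The main obstacle is establishing the existence of this good translate, and this is exactly where the primality of $p$ is used. I would argue by contradiction: suppose every translate $B + t$ either misses $A$ or contains $A$. Since $B + (a-b) \ni a$ whenever $a \in A$ and $b \in B$, every translate meeting $A$ must contain it, i.e. $A \subseteq B + (a - b)$, equivalently $(A - a) + b \subseteq B$, for all $a \in A$ and $b \in B$. Taking the union over $a \in A$ gives $(A - A) + b \subseteq B$ for every $b \in B$, that is, $(A - A) + B \subseteq B$; as $0 \in A - A$ and $B$ is finite this forces $(A - A) + B = B$, and iterating shows $H + B = B$ for the subgroup $H := \langle A - A\rangle$ generated by $A - A$. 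Because $|A| \ge 2$, the set $A - A$ contains a nonzero element, and since $p$ is prime this element generates all of $\mathbb{Z}_p$, so $H = G$ and hence $B = G$, contradicting $|B| \le p - |A| < p$. Thus a translate with $\emptyset \ne A \cap B' \subsetneq A$ exists, and the argument goes through.

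Finally I would note that an alternative and equally short route is the polynomial method: assuming $|A+B| \le |A| + |B| - 2$ with $|A| + |B| \le p$, one embeds $A + B$ in a set $C$ of size $|A|+|B|-2$, forms $f(x,y) = \prod_{c \in C}(x+y-c)$, and observes that the coefficient of $x^{|A|-1}y^{|B|-1}$ is $\binom{|A|+|B|-2}{|A|-1} \not\equiv 0 \pmod p$; the Combinatorial Nullstellensatz then produces $(a,b) \in A \times B$ with $f(a,b) \ne 0$, i.e. $a + b \notin C \supseteq A+B$, a contradiction. I would nonetheless favor the transform proof above, since it relies only on pigeonhole and the cyclic structure of $\mathbb{Z}_p$ and is thus completely self-contained.
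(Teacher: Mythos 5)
Your proof is correct, but there is nothing in the paper to compare it against: Theorem \ref{Th_CauchyDavenport} sits in the background section (Section \ref{Subsect_AdditiveCombResults}) and is quoted from the literature \cite{Cauchy1813,Davenport1935,TaoVuAdditive2007} without proof; the paper only uses it as a tool (in Lemma \ref{Lem_StructureConstsFor3Parts}, Theorem \ref{Th_3PartIsoClassification}, and Section \ref{Sect_ConstructionsOf3part}). What you have supplied is a sound, self-contained rendition of the classical Davenport/Dyson transform argument that those references contain. The individual steps check out: the pigeonhole reduction to the case $|A|+|B|\le p$ is right; the transform $(A,B)\mapsto (A\cap B',\,A\cup B')$ preserves the size sum, satisfies $A'+A''\subseteq A+B'$ (this inclusion uses commutativity of the group, which is available here), and strictly decreases $\min(|A|,|B|)$, so the induction closes since $|A'|+|A''|=|A|+|B|\le p$; and primality enters exactly where you placed it --- if no translate of $B$ cuts $A$ properly, then $B$ is invariant under translation by every element of $A-A$, hence under the subgroup these elements generate, which is all of $\mathbb{Z}_p$ because $A-A$ contains a nonzero element, contradicting $|B|\le p-|A|<p$. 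Your polynomial-method alternative is also valid: since $|A|+|B|-2\le p-2$, the coefficient $\binom{|A|+|B|-2}{|A|-1}$ is a ratio of factorials of integers smaller than $p$ and so is nonzero modulo $p$, and the Combinatorial Nullstellensatz applies. Either argument would serve as a self-contained appendix proof; the paper's choice to cite rather than reprove is the standard one for a classical result of this kind.
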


\begin{theorem}[Vosper's Theorem, \protect\cite{Vosper1956},\protect\cite%
{TaoVuAdditive2007}]
\label{Th_Vosper}Let $p$ be a prime and let $A$ and $B$ be additive subsets
of $\mathbb{Z}_{p}$ such that $\left\vert A\right\vert ,\left\vert
B\right\vert \geq 2$ and $\left\vert A+B\right\vert \leq p-2$. Then $%
\left\vert A+B\right\vert =\left\vert A\right\vert +\left\vert B\right\vert
-1$ if and only if $A$ and $B$ are arithmetic progressions with the same
step.
\end{theorem}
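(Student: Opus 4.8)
For the \textquotedblleft if\textquotedblright\ direction, suppose $A=a_{0}+\left[ 0,N_{1}\right] \cdot \delta $ and $B=b_{0}+\left[ 0,N_{2}\right] \cdot \delta $ are arithmetic progressions with a common step $\delta $. The addition formula for $\delta $-progressions recorded above gives $A+B=\left( a_{0}+b_{0}\right) +\left[ 0,N_{1}+N_{2}\right] \cdot \delta $, whose number of distinct elements is $\min \left( N_{1}+N_{2}+1,p\right) $. Since $\left\vert A+B\right\vert \leq p-2<p$ there is no wraparound modulo $p$, so $\left\vert A+B\right\vert =N_{1}+N_{2}+1=\left\vert A\right\vert +\left\vert B\right\vert -1$. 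This direction needs no further input.

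For the \textquotedblleft only if\textquotedblright\ direction, assume $\left\vert A+B\right\vert =\left\vert A\right\vert +\left\vert B\right\vert -1\leq p-2$ with $\left\vert A\right\vert ,\left\vert B\right\vert \geq 2$, and suppose without loss of generality that $\left\vert A\right\vert \leq \left\vert B\right\vert $. The plan is to argue by induction on $\left\vert A\right\vert $. For the base case $\left\vert A\right\vert =2$, write $A=\left\{ a_{1},a_{2}\right\} $ and set $\delta :=a_{2}-a_{1}\neq 0$. Then $A+B=\left( a_{1}+B\right) \cup \left( a_{2}+B\right) $, so after translating by $-a_{1}$ the critical equality $\left\vert A+B\right\vert =\left\vert B\right\vert +1$ becomes $\left\vert B\cup \left( B+\delta \right) \right\vert =\left\vert B\right\vert +1$, i.e. $\left\vert \left( B+\delta \right) \backslash B\right\vert =1$. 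As $p$ is prime and $\delta \neq 0$, the map $x\mapsto x+\delta $ is a single $p$-cycle on $\mathbb{Z}_{p}$, and $\left\vert \left( B+\delta \right) \backslash B\right\vert $ counts the maximal arcs of $B$ along this cycle. Exactly one arc forces $B$ to be a $\delta $-progression (a proper one, since $\left\vert A+B\right\vert \leq p-2$), and $A=\left\{ a_{1},a_{1}+\delta \right\} $ is one as well.

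For the inductive step $\left\vert A\right\vert \geq 3$, I would use the Dyson ($e$-)transform $A_{e}:=A\cap \left( B-e\right) $ and $B_{e}:=B\cup \left( A+e\right) $, which satisfies $\left\vert A_{e}\right\vert +\left\vert B_{e}\right\vert =\left\vert A\right\vert +\left\vert B\right\vert $ and $A_{e}+B_{e}\subseteq A+B$. Consequently $\left\vert A_{e}+B_{e}\right\vert \leq \left\vert A+B\right\vert \leq p-2$, while Cauchy--Davenport (Theorem \ref{Th_CauchyDavenport}) gives $\left\vert A_{e}+B_{e}\right\vert \geq \left\vert A_{e}\right\vert +\left\vert B_{e}\right\vert -1=\left\vert A\right\vert +\left\vert B\right\vert -1=\left\vert A+B\right\vert $; hence the critical equality and the bound $\leq p-2$ are preserved \emph{for free} by every transform with $A_{e},B_{e}$ nonempty. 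The goal is then to pick $e$ with $2\leq \left\vert A_{e}\right\vert <\left\vert A\right\vert $ (strictly shrinking the smaller set while keeping it of size at least two), apply the induction hypothesis to obtain that $A_{e}$ and $B_{e}$ are arithmetic progressions with a common step $\delta $, and transfer this structure back to $A,B$.

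The hard part is precisely this last step: guaranteeing that a transform with $2\leq \left\vert A_{e}\right\vert <\left\vert A\right\vert $ exists, and lifting the progression structure from $\left( A_{e},B_{e}\right) $ back to $\left( A,B\right) $ (note $B\subseteq B_{e}$, and a subset of a progression need not be a progression). The device I would use for both difficulties is the complementation duality. Writing $C:=\overline{A+B}$, one has $c-B\subseteq \overline{A}$ for every $c\in C$, so $C-B\subseteq \overline{A}$; since $\left\vert C\right\vert =p+1-\left\vert A\right\vert -\left\vert B\right\vert \geq 2$ and $\left\vert C\right\vert +\left\vert B\right\vert -1=p-\left\vert A\right\vert =\left\vert \overline{A}\right\vert $, Cauchy--Davenport forces $C-B=\overline{A}$ with equality, and symmetrically $C-A=\overline{B}$. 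This yields a dual critical triple $A,B,C$ with $\left\vert A\right\vert +\left\vert B\right\vert +\left\vert C\right\vert =p+1$, on which the exceptional branches (where no admissible $e$ exists, which I expect to force one of $\left\vert A\right\vert ,\left\vert B\right\vert ,\left\vert C\right\vert $ to equal $2$) collapse to the arc base case, and along which the common step $\delta $ propagates among the three sets. Combining the base case, the automatic preservation of criticality under the transform, and the complement duality to close the exceptional cases and pin down a single common step completes the argument.
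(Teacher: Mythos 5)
First, a point of reference: the paper does not prove this statement at all --- Vosper's Theorem appears in Section \ref{Subsect_AdditiveCombResults} as quoted background, with the proof deferred to \cite{Vosper1956} and \cite{TaoVuAdditive2007}. So your attempt is measured against the classical proof you are implicitly reconstructing, not against anything in the paper. The parts of your argument that are actually carried out are correct: the ``if'' direction is complete; the base case $|A|=2$ is a clean and valid argument ($|(B+\delta )\backslash B|$ equals the number of maximal $\delta $-arcs of $B$, legitimate because $|B|\leq |A+B|\leq p-2$ ensures $B\neq \mathbb{Z}_{p}$); your variant of the Dyson transform does satisfy $|A_{e}|+|B_{e}|=|A|+|B|$ and $A_{e}+B_{e}\subseteq A+B$, so criticality is preserved whenever $A_{e}\neq \emptyset $; and the duality $C-B=\overline{A}$, $C-A=\overline{B}$ follows from Cauchy--Davenport exactly as you say.

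However, the inductive step is a plan rather than a proof, and the two items you yourself label ``the hard part'' are precisely where the content of Vosper's theorem lies; neither is resolved. (a) You never show that an admissible $e$ with $2\leq |A_{e}|<|A|$ exists. The failure modes are real: $|A_{e}|=|A|$ happens exactly when $A+e\subseteq B$, and $|A_{e}|\leq 1$ happens exactly when $(A+e)\cap B$ is at most a singleton, so you must rule out that every $e\in B-A$ falls into one of these two classes. Your proposed dichotomy --- that non-existence of an admissible $e$ forces one of $|A|,|B|,|C|$ to equal $2$ --- is introduced with ``I expect'' and never argued; in the classical treatments this is where most of the case analysis sits. (b) Even granting the induction hypothesis for $(A_{e},B_{e})$, you do not recover the conclusion for $(A,B)$: as you note, $B$ is in general a proper subset of the progression $B_{e}$, and a proper subset of a progression need not be a progression. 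The assertion that ``the common step propagates among the three sets'' through the dual triple $(A,B,C)$ is exactly the statement that needs proof, not a device that can be invoked. As it stands, your argument establishes the theorem only in the case $\min (|A|,|B|)=2$.
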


\section{D-partitions of Groups \label{Sect_D-partitionsOfGroups}}

In this section we prove several general properties of d-partitions. We
begin with a proof of Theorem \ref{Th_DioidOutOfd-partition} which claims
that if $\Pi $ is a d-partition of $G$, then the set $D_{\Pi }$ of all
possible unions of parts of $\Pi $ is a dioid where $\oplus $ and $\otimes $
are taken to be, respectively, set union and setwise product.

\begin{proof}[Proof of Theorem \protect\ref{Th_DioidOutOfd-partition}]
It is straightforward to check that the conditions in Definition \ref%
{Def_Dioid} are satisfied. In particular, the empty set $\emptyset $ is a
neutral element with respect to $\oplus $, we have $\emptyset \otimes \pi
=\pi \otimes \emptyset =\emptyset $ for any $\pi \in \Pi $, and $e_{\Pi }$
is a neutral element with respect to $\otimes $. Since set union is an
idempotent binary operation, $\left( D_{\Pi },\oplus ,\otimes \right) $ is
canonically ordered with respect to $\oplus $ (see Remark \ref%
{Rem_AntiSymmetryIsEnough}), where the canonical order relation $\leq
_{D_{\Pi }}$(see Definition \ref{Def_Dioid}(c)) is set inclusion.
\end{proof}

\begin{remark}
Keeping the notation and assumptions of Theorem \ref%
{Th_DioidOutOfd-partition}, we claim that $D_{\Pi }$ is a complete and
dually complete dioid. To see this observe that the supremum of a subset of $%
D_{\Pi }$ is the union of its elements, and that $G=\tbigcup\limits_{\pi \in
\Pi }\pi $ is the unique maximal element with respect to $\leq _{D_{\Pi }}$ (%
$G$ is absorbing with respect to $\otimes $ and all the elements of $D_{\Pi
} $ except for $\emptyset $). The infimum of a subset of $D_{\Pi }$ is the
intersection of all of its elements. To see this use the fact that each
element of $D_{\Pi }$ is a union of some elements of $\Pi $, and the
intersection of any two distinct elements of $\Pi $ is empty. Finally,
unions of infinite families of subsets of $G$ are well defined and the
setwise product distributes over such unions. Also note that since $\Pi $ is
a partition, $D_{\Pi }$ is in bijection with the power set of $\Pi $.
\end{remark}

\subsection{Basic Properties of d-partitions\label{Subsect_BasicPropOfD}}

The following lemma summarizes basic properties of the identity element of a
d-partition.

\begin{lemma}
\label{Lem_DioidIdentityIsSubgroup}Let $G$ be a group and let $\Pi $ be a
partition of $G$ satisfying the closure and existence of an identity element
properties (Definition \ref{Def_d-partition}, (a) and (b)). Then:

a. $e$ is unique.

b. $e$ is a subgroup of $G$.

c. Each $\pi \in \Pi $ is a union of double cosets of $e$.
\end{lemma}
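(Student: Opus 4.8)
The plan is to establish the three parts in order, using only the closure property (a) and the identity property (b) of Definition \ref{Def_d-partition}, together with the defining feature of a partition that distinct parts are disjoint. No finiteness of $G$ is needed.

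For part (a), I would show that any two identity parts coincide. If $e$ and $e'$ both satisfy the identity property, then applying the property of $e$ to the part $e'$ gives $ee'=e'$, while applying the property of $e'$ to the part $e$ gives $ee'=e$. Hence $e=e'$.

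For part (b) I would proceed in three steps. First, locate $1_G$: letting $\pi_0$ be the part of $\Pi$ containing $1_G$, the identity property gives $e=e\cdot 1_G\subseteq e\pi_0=\pi_0$, and since distinct parts are disjoint this forces $e=\pi_0$, so $1_G\in e$. Second, closure: applying the identity property to $\pi=e$ yields $ee=e$, so in particular $ee\subseteq e$. Third --- and this is the step I expect to be the crux --- closure under inverses. For $x\in e$, let $\pi$ be the part containing $x^{-1}$; then $1_G=xx^{-1}$ lies in the setwise product $e\pi$, which equals $\pi$ by the identity property, so $1_G\in\pi$, and disjointness of parts forces $\pi=e$, whence $x^{-1}\in e$. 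I would emphasize that this last argument replaces the usual observation that a finite submonoid of a group is a subgroup by an argument resting on the partition structure, so that it remains valid when $G$ is infinite.

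For part (c), having established that $e$ is a subgroup, I would use the two-sided identity property to write $e\pi e=(e\pi)e=\pi e=\pi$ for every $\pi\in\Pi$. Since $1_G\in e$, each $g\in\pi$ lies in its own double coset $ege\subseteq e\pi e=\pi$, and the identity $\pi=\bigcup_{g\in\pi}ege$ exhibits $\pi$ as a union of double cosets of $e$. The only delicate point in the whole argument is the inverse step of (b); everything else is a direct unwinding of the setwise-product identities combined with the disjointness of distinct parts.
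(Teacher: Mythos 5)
Your proof is correct and matches the paper's own argument essentially step for step: the same location of $1_G$ via $e\subseteq e\pi_0=\pi_0$, the same inverse argument placing $1_G=xx^{-1}$ in $e\pi=\pi$ and invoking disjointness of parts, and the same identity $\pi=e\pi e=\bigcup_{g\in\pi}ege$ for part (c). You merely spell out details the paper leaves implicit (the uniqueness argument it calls ``Standard'' and the disjointness steps), so there is nothing to add.
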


\begin{proof}
a. Standard.

b. Since $e$ is a non-empty subset of $G$, and $e^{2}=e$, we get that $e$ is
closed under group multiplication. Let $\pi \in \Pi $ be the unique part
containing $1_{G}$. Then, $e\subseteq e\pi =\pi $ whence, $e=\pi $ and $%
1_{G}\in e$. Let $g\in e$ and let $\sigma $ be the unique part containing $%
g^{-1}$. Then, $1_{G}=gg^{-1}\in e\sigma =\sigma $ whence $e=\sigma $. It
follows that $e$ is a subgroup of $G$.

c. For any $\pi \in \Pi $ we have 
\begin{equation*}
\pi =e\pi e=\tbigcup\limits_{x\in \pi }exe\text{,}
\end{equation*}%
and therefore $\pi $ is a union of double cosets of $e$.
\end{proof}

\begin{remark}
\label{Rem_T2}Lemma \ref{Lem_DioidIdentityIsSubgroup} is the same as \cite[%
Lemma 1.2]{Tamaschke1968}.
\end{remark}

The following concept can be used to give an alternative characterization of
d-partitions.

\begin{definition}
Let $G$ be a group and let $\Pi $ be a partition of $G$. We say that $\Pi $
satisfies the \emph{intersection property} if for all $\pi _{1},\pi _{2},\pi
\in \Pi $ such that $\pi \subseteq \pi _{1}\pi _{2}$, for all $x\in \pi _{1}$
and for all $y\in \pi _{2}$ it holds that 
\begin{equation*}
\left( x\pi _{2}\right) \cap \pi \neq \emptyset ,~~\left( \pi _{1}y\right)
\cap \pi \neq \emptyset \text{.}
\end{equation*}
\end{definition}

\begin{theorem}
Let $\Pi $ be a partition of the group $G$ satisfying the closure and
existence of an identity element properties (Definition \ref{Def_d-partition}%
, (a) and (b)). Then $\Pi $ satisfies the intersection property if and only
if $\Pi $ satisfies the inverse property (Definition \ref{Def_d-partition},
(c)), equivalently, if and only if $\Pi $ is a d-partition.
\end{theorem}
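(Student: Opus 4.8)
The plan is to prove the two nontrivial implications separately, treating the closing ``equivalently'' clause as a restatement: under hypotheses (a) and (b), being a d-partition means precisely satisfying the inverse property (c). Throughout I will use an \emph{absorption principle} that is immediate from the closure property (a): since any product $\sigma\tau$ of parts is a union of parts of $\Pi$, a part $\pi$ meets $\sigma\tau$ if and only if $\pi\subseteq\sigma\tau$. I will also invoke Lemma \ref{Lem_DioidIdentityIsSubgroup}, which gives that $e$ is a subgroup (hence closed under inversion) and a part satisfying $e\sigma=\sigma e=\sigma$ for every $\sigma\in\Pi$.

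For the direction \emph{inverse property $\Rightarrow$ intersection property}, the key step is a pair of translation identities. Assume (c) and let $\pi\subseteq\pi_1\pi_2$; fix witnesses $a\in\pi_1$, $b_0\in\pi_2$, $z_0\in\pi$ with $ab_0=z_0$. Writing $a=z_0b_0^{-1}$ and using (c) to guarantee $\pi_2^{-1}\in\Pi$, the absorption principle yields $\pi_1\subseteq\pi\pi_2^{-1}$; symmetrically $\pi_2\subseteq\pi_1^{-1}\pi$. The intersection property then follows pointwise: given any $x\in\pi_1\subseteq\pi\pi_2^{-1}$, write $x=zd$ with $z\in\pi$ and $d\in\pi_2^{-1}$; then $b:=d^{-1}\in\pi_2$ satisfies $xb=z\in\pi$, so $(x\pi_2)\cap\pi\neq\emptyset$. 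The claim $(\pi_1 y)\cap\pi\neq\emptyset$ for $y\in\pi_2$ is obtained the same way from $\pi_2\subseteq\pi_1^{-1}\pi$.

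For the converse \emph{intersection property $\Rightarrow$ inverse property}, the idea is to feed triples involving the identity part $e$ into the intersection property. Fix $\pi\in\Pi$ and $g\in\pi$, and let $\sigma$ be the part containing $g^{-1}$; I aim to show $\sigma=\pi^{-1}$. From $gg^{-1}=1_G\in e$ and absorption, $e\subseteq\pi\sigma$, so the intersection property applied to $(\pi_1,\pi_2,\pi)=(\pi,\sigma,e)$ gives, for every $x\in\pi$, some $s\in\sigma$ with $xs\in e$; since $e$ is a subgroup this forces $x^{-1}=s(xs)^{-1}\in\sigma e=\sigma$, whence $\pi^{-1}\subseteq\sigma$. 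Applying the intersection property instead to $(\sigma,\pi,e)$ (legitimate since $g^{-1}g=1_G$ gives $e\subseteq\sigma\pi$) yields the reverse inclusion $\sigma\subseteq\pi^{-1}$, so $\pi^{-1}=\sigma\in\Pi$, establishing (c).

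I expect the main obstacle to be locating the right reformulation in the first implication. The naive attempt to transport a single witness $ab_0=z_0$ to an arbitrary $x\in\pi_1$ by adjusting factors in $e$ fails, because a part need not be a single double coset of $e$, so one cannot slide between the double cosets inside $\pi_1$ directly. The translation identities $\pi_1\subseteq\pi\pi_2^{-1}$ and $\pi_2\subseteq\pi_1^{-1}\pi$ bypass this entirely, and checking that each genuinely needs (c) (so that $\pi_1^{-1},\pi_2^{-1}$ are parts) is exactly what makes the statement come out as an equivalence rather than a one-sided implication.
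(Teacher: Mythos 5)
Your proposal is correct and follows essentially the same route as the paper's proof: the forward direction hinges on the same translation identity $\pi_1\subseteq\pi\pi_2^{-1}$ (obtained from a single witness plus closure/absorption, which is exactly where the inverse property is needed), and the converse feeds the triple $(\pi,\sigma,e)$ and then $(\sigma,\pi,e)$ into the intersection property to get the two inclusions $\pi^{-1}\subseteq\sigma$ and $\sigma\subseteq\pi^{-1}$, which is precisely the paper's ``interchange the roles of $y$ and $y^{-1}$'' step. The only differences are cosmetic (your pointwise computation $x^{-1}=s(xs)^{-1}\in\sigma e=\sigma$ streamlines the paper's chain of intersection statements), so there is nothing to add.
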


\begin{proof}
1. We suppose that $\Pi $ is a d-partition, and prove that it satisfies the
intersection property. Let $\pi _{1},\pi _{2},\pi \in \Pi $ be such that $%
\pi \subseteq \pi _{1}\pi _{2}$. Since $\pi \subseteq \pi _{1}\pi _{2}$,
there exist $\gamma \in \pi $, $\alpha \in \pi _{1}$, $\beta \in \pi _{2}$
such that $\gamma =\alpha \beta $. It follows that $\alpha =\gamma \beta
^{-1}$. Since $\beta ^{-1}\in \pi _{2}^{-1}$, this shows that $\pi _{1}\cap
\pi \pi _{2}^{-1}\neq \emptyset $. By (c) of Definition \ref{Def_d-partition}%
, $\pi _{2}^{-1}\in \Pi $. Hence, $\pi _{1}\cap \pi \pi _{2}^{-1}\neq
\emptyset $ implies $\pi _{1}\subseteq \pi \pi _{2}^{-1}$. Hence, for every $%
x\in \pi _{1}$ there exist $u\in \pi _{2}$ and $z\in \pi $ such that $%
x=zu^{-1}$ which is equivalent to $xu=z$. Thus $z\in \left( x\pi _{2}\right)
\cap \pi $ and $\left( x\pi _{2}\right) \cap \pi \neq \emptyset $. The proof
that $\left( \pi _{1}y\right) \cap \pi \neq \emptyset $ for all $y\in \pi
_{2}$ is similar.

2. We suppose that $\Pi $ satisfies conditions (a) and (b) of Definition \ref%
{Def_d-partition} and the intersection property, and prove that it satisfies
the inverse property. The proof borrows from the ideas of the proof of \cite[%
Lemma 2]{BGordonPacific1964}. Let $\pi \in \Pi $ be arbitrary. We wish to
prove that $\pi ^{-1}\in \Pi $. Fix $y\in \pi $ and let $C$ be the unique
part in $\Pi $ to which $y^{-1}$ belongs. Then $1_{G}\in yC$, implying $%
e\subseteq e\pi C=\pi C$. Let $x\in \pi $ be arbitrary. By the intersection
property we get that $xC\cap e\neq \emptyset $. This implies $C\cap
x^{-1}e\neq \emptyset $. Equivalently, by taking inverses and using Lemma %
\ref{Lem_DioidIdentityIsSubgroup}(b), $C^{-1}\cap ex\neq \emptyset $. This
implies $x\in eC^{-1}$. Now $C\in \Pi $\ implies that $eC=Ce=C$. Taking
inverses and using Lemma \ref{Lem_DioidIdentityIsSubgroup}(b) this gives $%
eC^{-1}=C^{-1}$. Hence $x\in eC^{-1}$ implies $x\in C^{-1}$. We have proved
that $\pi \subseteq C^{-1}$. Interchanging the roles of $y$ and $y^{-1}$ and
of $\pi $ and $C$, the above reasoning implies that $C\subseteq \pi ^{-1}$.
Taking inverses in the previous relation $\pi \subseteq C^{-1}$ we get $\pi
^{-1}\subseteq C$. It follows that $C=\pi ^{-1}$ whence $\pi ^{-1}\in \Pi $.
\end{proof}

The analogy between Schur rings (see Section \ref%
{Subsect_SRingsAndPartitions}) and Schur dioids suggests the introduction of
structure constants for Schur dioids. Their existence follows from the
following lemma, whose proof is immediate from Definition \ref%
{Def_d-partition}.

\begin{lemma}
\label{Lem_Def_DioidStructureConst}Let $G$ be a group, $\Pi $ a d-partition
of $G$, and $\left( D_{\Pi },\oplus ,\otimes \right) $ the associated Schur
dioid over $G$. Then there exists a function $d:\Pi ^{3}\rightarrow \left\{
\emptyset ,e_{\Pi }\right\} $ whose values $d_{\sigma ,\pi }^{\tau }\in
\left\{ \emptyset ,e_{\Pi }\right\} $, where $\pi ,\sigma ,\tau \in \Pi $
are arbitrary, satisfy 
\begin{equation*}
\pi \otimes \sigma =\underset{\tau \in \Pi }{\oplus }\left( d_{\sigma ,\pi
}^{\tau }\otimes \tau \right) \text{,}
\end{equation*}%
for all $\pi ,\sigma \in \Pi $. The $d_{\sigma ,\pi }^{\tau }$ will be
called the structure constants of $\left( D_{\Pi },\oplus ,\otimes \right) $.
\end{lemma}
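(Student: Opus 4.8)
The plan is to show that for each pair $\pi,\sigma\in\Pi$, the setwise product $\pi\otimes\sigma=\pi\sigma$ decomposes as a disjoint union of parts of $\Pi$, so that expressing this union in the dioid $\left(D_\Pi,\oplus,\otimes\right)$ forces each structure constant to be either $\emptyset$ (the part $\tau$ is absent from the union) or $e_\Pi$ (the part $\tau$ appears). First I would invoke the closure property (Definition \ref{Def_d-partition}(a)): for any $\pi,\sigma\in\Pi$ the product $\pi\sigma$ is a union of parts of $\Pi$. Hence there is a uniquely determined subset $T_{\pi,\sigma}\subseteq\Pi$ with $\pi\sigma=\bigcup_{\tau\in T_{\pi,\sigma}}\tau$, where $\tau\in T_{\pi,\sigma}$ precisely when $\tau\subseteq\pi\sigma$ (equivalently $\tau\cap\pi\sigma\neq\emptyset$, since distinct parts are disjoint).

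Next I would \emph{define} the structure constants by setting
\begin{equation*}
d_{\sigma,\pi}^{\tau}:=\begin{cases}e_\Pi & \tau\subseteq\pi\sigma,\\ \emptyset & \text{otherwise,}\end{cases}
\end{equation*}
so that $d_{\sigma,\pi}^{\tau}\in\{\emptyset,e_\Pi\}$ by construction, and $d:\Pi^3\to\{\emptyset,e_\Pi\}$ is a well-defined function. It then remains to verify the displayed identity $\pi\otimes\sigma=\oplus_{\tau\in\Pi}\left(d_{\sigma,\pi}^{\tau}\otimes\tau\right)$. Here I would use that $e_\Pi\otimes\tau=\tau$ by Definition \ref{Def_d-partition}(b) and that $\emptyset\otimes\tau=\emptyset$ is the absorbing element for $\otimes$ in $D_\Pi$ (as noted in the proof of Theorem \ref{Th_DioidOutOfd-partition}). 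Thus each summand $d_{\sigma,\pi}^{\tau}\otimes\tau$ equals $\tau$ when $\tau\subseteq\pi\sigma$ and equals $\emptyset$ otherwise, so the $\oplus$-sum (set union) over all $\tau\in\Pi$ collapses to $\bigcup_{\tau\subseteq\pi\sigma}\tau=\pi\sigma=\pi\otimes\sigma$, as required.

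There is no real obstacle here, which is why the paper calls the proof immediate; the only point demanding any care is that the sum on the right-hand side ranges over \emph{all} $\tau\in\Pi$, so one must confirm the absorbing behavior of $\emptyset$ to discard the parts not contained in $\pi\sigma$. One should also note that well-definedness relies on the partition structure: since the parts of $\Pi$ are pairwise disjoint and cover $G$, the condition $\tau\subseteq\pi\sigma$ is unambiguous and the decomposition of $\pi\sigma$ into parts is unique, so $d_{\sigma,\pi}^{\tau}$ is genuinely a function of $(\sigma,\pi,\tau)$. With these observations the identity holds for every $\pi,\sigma\in\Pi$, completing the proof.
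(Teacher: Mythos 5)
Your proof is correct and is precisely the argument the paper leaves implicit when it says the lemma is immediate from Definition \ref{Def_d-partition}: the closure property gives the unique decomposition of $\pi\sigma$ into parts, and defining $d_{\sigma,\pi}^{\tau}$ to be $e_{\Pi}$ or $\emptyset$ according to whether $\tau\subseteq\pi\sigma$ makes the displayed identity hold, using that $e_{\Pi}$ is neutral and $\emptyset$ is absorbing for $\otimes$. Your attention to the disjointness of parts (so that $\tau\subseteq\pi\sigma$ is equivalent to $\tau\cap\pi\sigma\neq\emptyset$ and the decomposition is unique) is exactly the right point of care, and nothing further is needed.
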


\begin{remark}
\label{Rem_01}For notational convenience we will write $0$ and $1$ for the
values of the dioid structure constants, instead of, respectively, $%
\emptyset $ and $e_{\Pi }$.
\end{remark}

Now we can define isomorphic d-partitions.

\begin{definition}
\label{Def_IsomorphicDPartitions}Let $\Pi _{1}$ and $\Pi _{2}$ be two
d-partitions with structure constants $\left( d_{1}\right) _{\sigma _{1},\pi
_{1}}^{\tau _{1}}$, $\tau _{1},\pi _{1},\sigma _{1}\in \Pi _{1}$ and $\left(
d_{2}\right) _{\sigma _{2},\pi _{2}}^{\tau _{2}}$, $\tau _{2},\pi
_{2},\sigma _{2}\in \Pi _{2}$. Then $\Pi _{1}$ and $\Pi _{2}$ are isomorphic
if there exists a bijection $f:\Pi _{1}\rightarrow \Pi _{2}$ such that for
any $\tau _{1},\pi _{1},\sigma _{1}\in \Pi _{1}$ we have $\left(
d_{1}\right) _{\sigma _{1},\pi _{1}}^{\tau _{1}}=\left( d_{2}\right)
_{f\left( \sigma _{1}\right) ,f\left( \pi _{1}\right) }^{f\left( \tau
_{1}\right) }$.
\end{definition}

\begin{remark}
\label{Rem_T3}Tamaschke defines a homomorphism from one S-semigroup into
another (see \cite[Definition 2.1]{Tamaschke1968}) which extends in a
natural way to the associated dioid structures. For bijective homomorphisms,
this definition coincides with Definition \ref{Def_IsomorphicDPartitions}
above.
\end{remark}

\begin{remark}
\label{Rem_DPartitionUnderIso}If $\Pi $ is a d-partition of a group $G$ and $%
\iota :G\rightarrow H$ is a group isomorphism, then $\iota \left( \Pi
\right) :=\left\{ \iota \left( \pi \right) |\pi \in \Pi \right\} $ is a
d-partition of $H$ which is isomorphic to $\Pi $. However, non-isomorphic
groups might still have isomorphic d-partitions. See for instance Corollary %
\ref{Coro_Supplement_construction}.
\end{remark}

\subsection{S-partitions vs. d-partitions\label{Subsect_SAndDPartitions}}

Using the concept of a group semiring (Definition \ref{Def_GroupSemiring}
and Proposition \ref{Prop_k[G]IsSemiAndD}), we present in this section
another approach for constructing a dioid from a suitable group partition,
which is on the same footing as the construction of a Schur ring from an
S-partition.

\begin{definition}
\label{Def_SchurDioid}Let $G$ be a group and $K$ be a commutative dioid
which is a complete dioid if $G$ is not finite. A subdioid $\mathcal{A}$ of $%
K\left[ G\right] $ is called a generalized 1-Schur dioid over $G$ if there
exists a partition $\Pi $ of $G$ which has the following properties:

\begin{enumerate}
\item[a.] The set $\underline{\Pi }$ is a basis of $\mathcal{A}$ over $K$.

\item[b.] $\left\{ 1_{G}\right\} \in \Pi $.

\item[c.] $X^{-1}\in \Pi $ for all $X\in \Pi $.
\end{enumerate}

We shall call a partition $\Pi $ which has the properties (a)-(c), a
generalized 1d-partition.
\end{definition}

The similarity between Definitions \ref{Def_SchurDioid} and \ref%
{Def_SchurRing} is evident. Indeed, it can be verified that 1d-partitions by
Definition \ref{Def_d-partition} arise as a special case of Definition \ref%
{Def_SchurDioid}, as stated in the following lemma.

\begin{lemma}
Definition \ref{Def_d-partition} with $e_{\Pi }=\left\{ 1_{G}\right\} $ and
Definition \ref{Def_SchurDioid} with $K=\mathbb{B}$ (the Boolean dioid)
coincide.
\end{lemma}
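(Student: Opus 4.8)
The plan is to make explicit the dictionary between the Boolean group semiring $\mathbb{B}\left[ G\right] $ and the algebra of subsets of $G$, and then to read off the coincidence of the two lists of conditions. First I would record that, since $\mathbb{B}=\left\{ 0,1\right\} $, every element of $\mathbb{B}\left[ G\right] $ is of the form $\underline{S}=\sum_{g\in S}g$ for a unique subset $S\subseteq G$, with $\underline{\emptyset }$ the additive neutral element and $\underline{\{1_{G}\}}$ the multiplicative neutral element (cf. the proof of Proposition \ref{Prop_k[G]IsSemiAndD}). Under the resulting bijection $S\longleftrightarrow \underline{S}$, the pointwise addition of $\mathbb{B}\left[ G\right] $ corresponds to set union (this is where the idempotency $1\oplus 1=1$ of $\mathbb{B}$ is used), and the convolution product corresponds to the setwise product, since the coefficient of $g$ in $\underline{S}\,\underline{T}$ is nonzero precisely when $g\in ST$. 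Thus $\mathbb{B}\left[ G\right] $ is isomorphic, as a dioid, to the dioid of all subsets of $G$ under union and setwise product.

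Next I would translate the basis condition. Given a partition $\Pi $ of $G$, put $\mathcal{A}:=\underline{D_{\Pi }}$, the image under underlining of the set $D_{\Pi }$ of all unions of parts of $\Pi $. Because the parts of a partition are nonempty and pairwise disjoint, with $0/1$ coefficients the expression $\sum_{\pi \in \Pi }a_{\pi }\underline{\pi }$ ranges over exactly the elements $\underline{U}$ with $U$ a union of parts, and each such $U$ determines the index set $\left\{ \pi \mid a_{\pi }=1\right\} $ uniquely. Hence $\underline{\Pi }$ is automatically a basis of $\mathcal{A}$ over $\mathbb{B}$ (condition (a) of Definition \ref{Def_SchurDioid}), and the remaining requirement of that definition, namely that $\mathcal{A}$ be a subdioid of $\mathbb{B}\left[ G\right] $, translates via the dictionary into the statements that $D_{\Pi }$ is closed under setwise product and contains the multiplicative identity $\{1_{G}\}$ (closure under union and containment of $\emptyset $ being automatic).

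With these translations in place the two directions are a matter of matching the three conditions. Starting from a $1$d-partition (Definition \ref{Def_d-partition} with $e_{\Pi }=\{1_{G}\}$): Theorem \ref{Th_DioidOutOfd-partition} already shows $D_{\Pi }$ is a dioid, so $\mathcal{A}=\underline{D_{\Pi }}$ is a subdioid once it contains $\underline{\{1_{G}\}}$, which holds because $e_{\Pi }=\{1_{G}\}\in \Pi $; this same fact gives condition (b) of Definition \ref{Def_SchurDioid}, and the inverse property (c) of Definition \ref{Def_d-partition} is verbatim condition (c). Conversely, starting from a generalized $1$d-partition with $K=\mathbb{B}$: condition (b) gives $\{1_{G}\}\in \Pi $, and the singleton $e:=\{1_{G}\}$ trivially satisfies $e\pi =\pi e=\pi $, yielding the identity property of Definition \ref{Def_d-partition} with $e_{\Pi }=\{1_{G}\}$; closure of the subdioid under $\otimes $ together with $\underline{\Pi }$ being a basis forces each $\pi _{1}\pi _{2}$ to be a union of parts, giving the closure property (a); and condition (c) again gives the inverse property.

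The only genuinely non-routine step is verifying the dictionary of the first paragraph, and specifically its multiplicative half: I expect the main obstacle to be checking that in $\mathbb{B}\left[ G\right] $ the convolution coefficient of $g$ in $\underline{S}\,\underline{T}$ is nonzero exactly when $g\in ST$, which relies on the idempotency of $\mathbb{B}$ so that distinct factorizations $g=h_{1}t_{1}=h_{2}t_{2}$ do not cancel. Once this is settled, everything else is bookkeeping, and the availability of Theorem \ref{Th_DioidOutOfd-partition} makes the subdioid verification immediate.
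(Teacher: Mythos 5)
Your proof is correct. The paper states this lemma without proof (it merely asserts that "it can be verified"), and your dictionary $S\longleftrightarrow \underline{S}$ identifying $\left( \mathbb{B}\left[ G\right] ,+,\cdot \right) $ with subsets of $G$ under union and setwise product is exactly the intended verification; in particular your one flagged subtlety is handled correctly, since in $\mathbb{B}$ a (possibly infinite, as $\mathbb{B}$ is complete) sum equals $1$ precisely when some summand does, so the coefficient of $g$ in $\underline{S}\,\underline{T}$ is $1$ exactly when $g\in ST$, and the rest of your bookkeeping matches the paper's definitions.
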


For the rest of the paper we work with Definition \ref{Def_d-partition}. It
would be interesting to study dioids which arise from other choices of $K$.
Here we just comment on one possible direction. As Schur rings are mainly
studied over fields, it makes sense to look for an analogue structure in the
case of commutative dioids. An obvious idea is to look at commutative dioids 
$K$ for which every element besides $\varepsilon $ is invertible with
respect to $\otimes $. We call such dioids \emph{d-fields} (see also \cite[%
Definition 1.5.2.3]{GondranMinouxDioidBook2010}). For example, $\mathbb{B}$
is a d-field. However, we are unaware of a systematic study of d-fields. We
offer the following observation whose proof is given in the appendix
(Section \ref{Appendix_ProofOdDfield}).

\begin{proposition}
\label{Prop_d-field}Let $\left( D,\oplus ,\otimes \right) $ be a d-field.
Then either, for each $d\in D\backslash \left\{ \varepsilon \right\} $ all
finite sums of the form $d,d\oplus d,d\oplus d\oplus d,...$ are distinct, or 
$D$ is an idempotent dioid. Furthermore, if $D$ is an idempotent dioid then
either $D=\mathbb{B}=\left( \left\{ \varepsilon ,e\right\} ,\oplus ,\otimes
\right) $ or $D$ has no largest element, and, in particular, $D$ is infinite
and is not complete.
\end{proposition}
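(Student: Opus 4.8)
The plan is to reduce everything to the $\oplus$-powers of the multiplicative identity $e$. First I would observe that, writing $nd$ for the $n$-fold sum $d\oplus\cdots\oplus d$, distributivity of $\otimes$ over $\oplus$ together with $d=e\otimes d$ gives $nd=(ne)\otimes d$ for every $d\in D$ and every $n\geq 1$ by an immediate induction. Then, for an invertible $d$ (that is, any $d\neq\varepsilon$, by the d-field hypothesis), cancelling the factor $d$ shows that $md=nd$ holds if and only if $me=ne$. Hence the alternative in the first assertion is uniform in $d$: either the sums $ne$ are pairwise distinct, in which case so are the sums $nd$ for every $d\neq\varepsilon$, or some coincidence $me=ne$ with $m<n$ occurs, and this is what I would turn into idempotency.

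Assuming such a coincidence $me=ne$ with $m<n$, I would argue as follows. Since $a\leq_D a\oplus x$ always holds, the sequence $(ke)_{k\geq 1}$ is non-decreasing for $\leq_D$, so the chain $me\leq_D(m+1)e\leq_D\cdots\leq_D ne=me$ collapses by antisymmetry (Remark \ref{Rem_AntiSymmetryIsEnough}) to $me=(m+1)e$. Setting $s:=me$, this reads $s=s\oplus e$, whence an easy induction gives $s=s\oplus ke$ for all $k\geq 0$; taking $k=m$ yields $s=s\oplus me=s\oplus s$, so $s$ is $\oplus$-idempotent. Here I use that $s\neq\varepsilon$ — indeed $\varepsilon<_D e\leq_D s$, so $s=\varepsilon$ would force $e=\varepsilon$, contradicting Definition \ref{Def_Dioid} — hence $s$ is invertible, and multiplying $s\oplus s=s$ by $s^{-1}$ gives $e\oplus e=e$. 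Finally, for arbitrary $a\in D$ we get $a\oplus a=(e\oplus e)\otimes a=e\otimes a=a$, so $D$ is idempotent, which completes the first part.

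For the second part I would assume that $D$ is idempotent and possesses a largest element $M$, and aim to prove $D=\mathbb{B}$. As before $M\neq\varepsilon$ (else $e\leq_D M=\varepsilon$ forces $e=\varepsilon$), so $M$ is invertible and $a\mapsto a\otimes M^{-1}$ is an order-preserving bijection of $D$. Applying it to the relations $a\leq_D M$, valid for all $a\in D$, yields $a\otimes M^{-1}\leq_D M\otimes M^{-1}=e$; as $a\otimes M^{-1}$ ranges over all of $D$, this says that $e$ is a largest element, and uniqueness of the largest element gives $M=e$. Thus $e$ is the top. Now I would take any $a\neq\varepsilon$: multiplying a witnessing relation for $a\leq_D e$ by $a^{-1}$ gives $e\leq_D a^{-1}$, while $a^{-1}\leq_D e$ since $e$ is the top, so antisymmetry forces $a^{-1}=e$ and hence $a=e$. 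Therefore $D=\{\varepsilon,e\}=\mathbb{B}$. Contrapositively, an idempotent $D\neq\mathbb{B}$ has no largest element; it is then not complete (a complete dioid would have $\sup D$ as a largest element) and infinite (a finite idempotent dioid is a finite $\oplus$-semilattice, whose join of all its elements is a largest element).

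The main obstacle, and the place where the d-field hypothesis is used most essentially, is the passage from a single numerical coincidence to full idempotency: the cancellation $md=nd\Rightarrow me=ne$, the cancellation of $s$ in $s\oplus s=s$, and the cancellation of $M$ via $M\otimes M^{-1}=e$ all rely on invertibility, while the collapse of the $\leq_D$-chains relies on antisymmetry of the canonical order. Everything else is bookkeeping; the only point requiring care is that every element I invert is genuinely different from $\varepsilon$, which in each case follows from $e\neq\varepsilon$ together with the monotonicity of the $\oplus$-sums.
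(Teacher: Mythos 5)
Your proof is correct: every element you invert is verified to be nonzero, every use of distributivity and antisymmetry is legitimate, and the dichotomy, the derivation of idempotency, and the collapse to $\mathbb{B}$ are all sound. The overall skeleton matches the paper's proof — both arguments funnel everything through the additive powers of $e$ via distributivity and d-field cancellation, and both handle the second assertion by showing a largest element must equal $e$ and then forcing every element of $D\backslash \left\{ \varepsilon \right\}$ onto $e$ — but your first part takes a genuinely shorter route. The paper fixes $d$, takes $n$ minimal with the $n$-fold sum of $d$ equal to the $(n+1)$-fold sum, proves by induction that the sequence stabilizes (so every $d^{\prime }\neq \varepsilon $ is ``$n$-idempotent''), and then uses a multiplicative trick: the element $x:=e\oplus \cdots \oplus e$ ($n$ summands) satisfies $x\otimes x=x$, so invertibility gives $x=e$, and only then does minimality of $n$ force $n=1$. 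You instead collapse a single coincidence $me=ne$ (with $m<n$) to $me=(m+1)e$ by monotonicity and antisymmetry, extract the \emph{additive} idempotency $s\oplus s=s$ for $s:=me$, and cancel $s$ multiplicatively to reach $e\oplus e=e$ in one step — no stabilization induction and no minimality argument are needed, which is a modest but real simplification. In the second part the difference is cosmetic: the paper squares the largest element $g$ (getting $g\leq _{D}g^{2}\leq _{D}g$, hence $g^{2}=g$ and $g=e$ by cancellation), whereas you transport the property of being largest to $e$ via the order-preserving bijection $a\longmapsto a\otimes M^{-1}$; the endgame ($a\leq _{D}e$ plus cancellation forces $a=e$) and the observations that a finite dioid, or a complete one,必has a largest element are the same in both proofs.
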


Our last result in this section is the following direct connection between
S-partitions and d-partitions.

\begin{proposition}
\label{Prop_s_Implies_d}Every S-partition $\mathcal{S}$ of a finite group $G$
over a commutative ring $R$ of characteristic zero is a 1d-partition.
Furthermore, choosing some numbering of the parts of $\mathcal{S}$, and
denoting the structure constants of the associated Schur ring by $s_{ij}^{k}$
and those of the associated Schur dioid by $d_{ij}^{k}$, we have that $%
d_{ij}^{k}=0$ if and only if $s_{ij}^{k}=0$ and $d_{ij}^{k}=1$ if and only
if $s_{ij}^{k}>0$.
\end{proposition}

\begin{proof}
Let $\mathcal{S}$ be an S-partition of a finite group $G$ over $R$ and let $%
\mathcal{A}$ be the associated Schur ring. The existence of an identity, and
the inverse property (Definition \ref{Def_d-partition}, (b) and (c)) follow
immediately from (b) and (c) of Definition \ref{Def_SchurRing}. For the
closure property (Definition \ref{Def_d-partition} (a)), let $S_{1}$ and $%
S_{2}$ be any two elements of $\mathcal{S}$. We have to prove that $%
S_{1}S_{2}$ is a union of elements of $\mathcal{S}$. Since $\underline{%
\mathcal{S}}$ is a basis of $\mathcal{A}$ over $R$, and $\underline{S_{1}}~%
\underline{S_{2}}\in \mathcal{A}$, we have $\underline{S_{1}}~\underline{%
S_{2}}=\tsum\limits_{S\in \mathcal{S}}$ $b_{S}\underline{S}$ where $b_{S}\in
R$ are uniquely determined for all $S\in \mathcal{S}$. Using the definition
of the product of the formal sums $\underline{S_{1}}$ and $\underline{S_{2}}$%
, and the assumption that $R$ is of characteristic zero, we get that $%
b_{S}=n_{S}1_{R}$, where, for any $s\in S$, the non-negative integer $n_{S}$
counts the number of distinct pairs $\left( s_{1},s_{2}\right) $ with $%
s_{1}\in S_{1}$ and $s_{2}\in S_{2}$ such that $s_{1}s_{2}=s$. Note that $%
n_{S}$ does not depend on the specific choice of $s\in S$ (see Remark \ref%
{Rem_s-ringDefWithoutC}). We show that if $\left( S_{1}S_{2}\right) \cap
S_{3}\neq \emptyset $ then $S_{3}\subseteq S_{1}S_{2}$. Suppose that $x\in
S_{1}$ and $y\in S_{2}$ are such that $xy\in S_{3}$. Then $n_{S_{3}}\geq 1$,
and hence, for every $s\in S_{3}$ there exists a pair $\left(
s_{1},s_{2}\right) $ with $s_{1}\in S_{1}$ and $s_{2}\in S_{2}$ such that $%
s_{1}s_{2}=s$, proving $S_{3}\subseteq S_{1}S_{2}$. The fact that $\left(
S_{1}S_{2}\right) \cap S_{3}\neq \emptyset $ implies $S_{3}\subseteq
S_{1}S_{2}$ proves that $S_{1}S_{2}$ is a union of elements of $\mathcal{S}$%
. The relation between the structure constants follows easily.
\end{proof}

\section{Inducing d-partitions from given ones \label{Sect_Inducing}}

In this section we prove several results which share the following feature:
A d-partition $\Pi $ of a group $G$ induces a d-partition $\Pi ^{\prime }$
of a related group. We start with the proof of Theorem \ref%
{Th_General_1d_connection}.

\begin{proof}[Proof of Theorem \protect\ref{Th_General_1d_connection}]
Observe that any $\pi \in \Pi _{<A}$ is contained in $A$ while any $\pi \in
\Pi _{>A}$ is a union of non-trivial double cosets of $A$. Hence $\Pi
_{<A}\cap \Pi _{>A}=\emptyset $.

a. Suppose that $\left\{ \Pi _{<A},\Pi _{>A}\right\} $ is a partition of $%
\Pi $. We have to prove that $\Pi ^{\prime }=\Pi _{>A}\cup \left\{ A\right\} 
$ is a d-partition of $G$ with $e_{\Pi ^{\prime }}=A$. We have%
\begin{equation}
\left( \tbigcup\limits_{\pi \in \Pi _{<A}}\pi \right) \cup \left(
\tbigcup\limits_{\pi \in \Pi _{>A}}\pi \right) =\tbigcup\limits_{\pi \in \Pi
}\pi =G\text{.}  \label{Eq_U<U>=G}
\end{equation}%
Since $\tbigcup\limits_{\pi \in \Pi _{<A}}\pi \subseteq A$ it follows that
the set $\tbigcup\limits_{\pi \in \Pi _{>A}}\pi $\ contains each non-trivial
double coset of $A$. Since each part in $\Pi _{>A}$ is a union of
non-trivial double cosets of $A$, we can conclude that $\Pi _{>A}$ is a
partition of $G\backslash A$ whose parts are unions of double cosets of $A$.
Hence, by Equation (\ref{Eq_U<U>=G}), $\tbigcup\limits_{\pi \in \Pi
_{<A}}\pi =A$. The claim that $\Pi ^{\prime }$ is a d-partition with $e_{\Pi
^{\prime }}=A$ follows easily.

Suppose that $\left\{ \Pi _{<A},\Pi _{>A}\right\} $ is not a partition of $%
\Pi $. We prove that $\Pi ^{\prime }:=\Pi _{>A}\cup \left\{ A\right\} $ is
not a partition of $G$. Since $\Pi _{<A}\subseteq \Pi $, $\Pi _{>A}\subseteq
\Pi $ and $\Pi _{<A}\cap \Pi _{>A}=\emptyset $, there exists some $\pi \in
\Pi $ such that $\pi \notin $ $\Pi _{<A}\cup \Pi _{>A}$. This implies $\pi
\cap A\subset \pi $, and $\pi \cap \tbigcup\limits_{\sigma \in \Pi
_{>A}}\sigma =\emptyset $. Consequently, $\pi \cap \tbigcup\limits_{\sigma
\in \Pi ^{\prime }}\sigma =\pi \cap A\subset \pi $, proving that $\Pi
^{\prime }$ is not a partition of $G$.

b. Clearly $\Pi $ is a partition of $G$. We prove that it has the closure
property. Let $\pi _{1},\pi _{2}\in \Pi $. If $\pi _{1},\pi _{2}\in \Pi _{A}$
then $\pi _{1}\pi _{2}$ is a union of parts from $\Pi _{A}\subseteq \Pi $.
If $\pi _{1},\pi _{2}\in \Pi ^{\prime }\backslash \left\{ A\right\} $ then $%
\pi _{1}\pi _{2}$ is a union of parts from $\Pi ^{\prime }\backslash \left\{
A\right\} $ and possibly also $A$ (which is the union of all parts in $\Pi
_{A}$). If $\pi _{1}\in \Pi _{A}$ and $\pi _{2}\in \Pi ^{\prime }\backslash
\left\{ A\right\} $, then $\pi _{1}\subseteq A$ and $\pi _{2}$ is a union of
double cosets of $A$, and so $\pi _{1}\pi _{2}=\pi _{2}$. A similar argument
applies if $\pi _{1}\in \Pi ^{\prime }\backslash \left\{ A\right\} $ and $%
\pi _{2}\in \Pi _{A}$. The claim $e_{\Pi }=e_{\Pi _{A}}$ and the inverse
property are easy to verify. Finally, taking $\Pi _{A}$ to be the partition
of $A$ whose parts are the conjugacy classes of $A$ proves the last
assertion of (b).

c. We prove that $\Pi _{<A}$ is a d-partition of $A$. By assumption $\Pi
_{<A}$ is a partition of $A$. For any $\pi _{1},\pi _{2}\in \Pi _{<A}$ we
have that $\pi _{1}\pi _{2}\subseteq A$ because of the closure of $A$ under
multiplication. On the other hand, since $\Pi $ is a d-partition of $G$, $%
\pi _{1}\pi _{2}$ is a union of parts of $\Pi $. Hence $\pi _{1}\pi _{2}$ is
a union of parts of $\Pi _{<A}$. Moreover, $1_{G}\in A$ implies $e_{\Pi
}\subseteq A$, and since $A^{-1}=A$ we get that $\Pi _{<A}$ has the inverse
property.

In order to prove that $\Pi ^{\prime }$ is a d-partition of $G$, we first
prove that it is a partition of $G$. It is easily seen that the elements of $%
\Pi ^{\prime }$ are non-empty and that their union is equal to $G$. We prove
that distinct elements of $\Pi ^{\prime }$ are disjoint. For this it
suffices to show that if $\pi _{1},\pi _{2}\in \Pi \backslash \Pi _{<A}$,
and $\left( A\pi _{1}A\right) \cap \left( A\pi _{2}A\right) \neq \emptyset $%
, then $A\pi _{1}A=A\pi _{2}A$. Assuming $\left( A\pi _{1}A\right) \cap
\left( A\pi _{2}A\right) \neq \emptyset $, there exist $a_{i}\in A$ where $%
i=1,2,3,4$ and $x\in \pi _{1}$, $y\in \pi _{2}$ such that $%
a_{1}xa_{2}=a_{3}ya_{4}$. This implies $x=\left( a_{1}^{-1}a_{3}\right)
y\left( a_{4}a_{2}^{-1}\right) \in A\pi _{2}A$. Since $A$ is a union of
parts of $\Pi $, which is a d-partition, $A\pi _{2}A$ is a union of parts of 
$\Pi $ and hence, since $x\in \pi _{1}\in \Pi $ satisfies $x\in A\pi _{2}A$
we get that $\pi _{1}\subseteq A\pi _{2}A$. Multiplying by $A$ from each
side gives $A\pi _{1}A\subseteq A\pi _{2}A$. The inclusion $A\pi
_{2}A\subseteq A\pi _{1}A$ is proved similarly and hence $A\pi _{1}A=A\pi
_{2}A$.

Now we prove that the partition $\Pi ^{\prime }$ is a d-partition of $G$.
Clearly, $A\tau =\tau A=\tau $ for any $\tau \in \Pi ^{\prime }$ and hence $%
A $ acts as an identity. In order to complete the proof of the closure
property of $\Pi ^{\prime }$ it suffices to show that if $\pi _{1},\pi
_{2}\in \Pi \backslash \Pi _{<A}$ then $\left( A\pi _{1}A\right) \left( A\pi
_{2}A\right) $ is a union of parts of $\Pi ^{\prime }$. We have $\left( A\pi
_{1}A\right) \left( A\pi _{2}A\right) =A\left( \pi _{1}A\pi _{2}\right) A$.
Since $\Pi $ is a d-partition and $A$ is a union of parts of $\Pi $, we get
that $\pi _{1}A\pi _{2}$ is a union of parts of $\Pi $. Let $\pi \in \Pi $
be such a part. If $\pi \in \Pi _{<A}$ then $A\pi A=A\in \Pi ^{\prime }$.
Otherwise $\pi \in \Pi \backslash \Pi _{<A}$ and again $A\pi A\in \Pi
^{\prime }$.

Finally, $\left( A\pi A\right) ^{-1}=A\pi ^{-1}A$, and therefore the inverse
property of $\Pi $ implies that $\Pi ^{\prime }$ has the inverse property.

d. Note that by (c), $\Pi _{<A}$ is a d-partition of $A$, and hence, by (b),
we can assume $\Pi _{<A}=\left\{ A\right\} $. We prove the closure property
of $\Pi _{C}$ (the existence of an identity and the inverse property also
follow easily from the following arguments). We have $A^{2}=A$, and since $%
\pi _{c}$ is a union of double cosets of $A$, we have $A\pi _{c}=\pi
_{c}A=\pi _{c}$. It remains to consider $\pi _{c}^{2}$. If $\left\vert
G:A\right\vert =2$ then $\pi _{c}^{2}=A$, since $A\trianglelefteq G$ and $%
\pi _{c}$ is the single non-trivial coset of $A$. Otherwise $\left\vert
G:A\right\vert >2$. We prove that $\pi _{c}^{2}=G$. Notice that $\pi
_{c}^{-1}=\pi _{c}$, and that for any $x\in G$ it holds that $x\in \pi _{c}$
if and only if $Ax\subseteq \pi _{c}$. Let $h\in \pi _{c}$ be arbitrary.
Since $h^{-1}\in \pi _{c}$ and $Ah\subseteq \pi _{c}$, we have $A=\left(
Ah\right) h^{-1}\subseteq \pi _{c}^{2}$. Furthermore, since $\left\vert
G:A\right\vert >2$, there exists $g\in \pi _{c}$ such that $Ag^{-1}\neq
Ah^{-1}$ which is equivalent to $g^{-1}h\in \pi _{c}$. Therefore $Ah=\left(
Ag\right) \left( g^{-1}h\right) \subseteq \pi _{c}^{2}$. As $Ah$ is an
arbitrary coset of $A$ contained in $\pi _{c}$ we get $\pi _{c}\subseteq \pi
_{c}^{2}$, and therefore $\pi _{c}^{2}=G$.
\end{proof}

\begin{remark}
\label{Rem_T1}Part (c) of Theorem \ref{Th_General_1d_connection} is the same
as \cite[Proposition 1.4 ]{Tamaschke1968}. The translation of our
presentation in (c) to Tamaschke's terminology goes as follows. Let $T$ be
the S-semigroup associated with $\Pi $. The assumption that $\Pi _{<A}$ is a
partition of $A$ is equivalent in Tamaschke's terminology to the assumption
that $A$ is a $T$-subgroup of $G$ (see \cite[Definition 1.3]{Tamaschke1968}%
), and the conclusion that $\Pi _{<A}$ is a d-partition is equivalent to 
\cite[Proposition 1.4 (1)]{Tamaschke1968}. The claim concerning $\Pi
^{\prime }$ is equivalent to \cite[Proposition 1.4 (2)]{Tamaschke1968}.
\end{remark}

\begin{remark}
As a by-product of the proof of part (d) of Theorem \ref%
{Th_General_1d_connection}, we find that there are exactly two isomorphism
types (see Definition \ref{Def_IsomorphicDPartitions}) of $2$-part
d-partitions.
\end{remark}

\begin{proof}[Proof of Theorem \protect\ref{Th_CoarsenessViaAction}]
It is immediate to see that $\Pi ^{\prime }$ is a partition of $G$. Set $%
e:=e_{\Pi }$. Let $a\in A$. We have $1_{G}\in e^{a}\in \Pi $. This implies $%
e^{a}=e$ for all $a\in A$, and hence $e=U_{\left[ e\right] }\in \Pi ^{\prime
}$. Since every $U_{\left[ \pi \right] }$ is a union of parts of $\Pi $, we
get $eU_{\left[ \pi \right] }=U_{\left[ \pi \right] }e=U_{\left[ \pi \right]
}$. This proves $e_{\Pi ^{\prime }}=e$.

Now let $\pi _{1},\pi _{2}\in \Pi $ and consider the setwise product $U_{%
\left[ \pi _{1}\right] }U_{\left[ \pi _{2}\right] }$. We have:%
\begin{eqnarray*}
U_{\left[ \pi _{1}\right] }U_{\left[ \pi _{2}\right] }
&=&\tbigcup\limits_{a_{1},a_{2}\in A}\pi _{1}^{a_{1}}\pi
_{2}^{a_{2}}=\tbigcup\limits_{a_{1},a_{2}\in A}\left( \pi _{1}\pi
_{2}^{a_{2}a_{1}^{-1}}\right) ^{a_{1}}=\tbigcup\limits_{a\in
A}\tbigcup\limits_{g\in A}\left( \pi _{1}\pi _{2}^{g}\right) ^{a} \\
&=&\tbigcup\limits_{g\in A}\tbigcup\limits_{a\in A}\left( \pi _{1}\pi
_{2}^{g}\right) ^{a}\text{.}
\end{eqnarray*}%
Since $\pi _{1},\pi _{2}^{g}\in \Pi $ we have $\pi _{1}\pi
_{2}^{g}=\tbigcup\limits_{i\in I_{g}}\pi _{i}$, where $I_{g}$ is some
indexing set. It follows that 
\begin{gather*}
U_{\left[ \pi _{1}\right] }U_{\left[ \pi _{2}\right] }=\tbigcup\limits_{g\in
A}\tbigcup\limits_{a\in A}\left( \tbigcup\limits_{i\in I_{g}}\pi _{i}\right)
^{a}=\tbigcup\limits_{g\in A}\tbigcup\limits_{a\in A}\left(
\tbigcup\limits_{i\in I_{g}}\pi _{i}^{a}\right) =\tbigcup\limits_{g\in
A}\tbigcup\limits_{i\in I_{g}}\left( \tbigcup\limits_{a\in A}\pi
_{i}^{a}\right) = \\
=\tbigcup\limits_{g\in A}\tbigcup\limits_{i\in I_{g}}U_{\left[ \pi _{i}%
\right] }\text{.}
\end{gather*}%
Thus we have shown that $U_{\left[ \pi _{1}\right] }U_{\left[ \pi _{2}\right]
}$ can be written as a union of elements of $\Pi ^{\prime }$. The inverse
property follows immediately from $\left( \pi ^{a}\right) ^{-1}=\left( \pi
^{-1}\right) ^{a}$.
\end{proof}

\begin{corollary}
\label{Coro_AInvariantH}Let $G$ be a group and let $A$ be a group that acts
on $G$ as automorphisms. Let $H\leq G$ and $\Pi :=\left\{ HxH|x\in G\right\} 
$. Then $A$ acts on $\Pi $ if and only if $H$ is $A$-invariant, and in this
case $U_{\left[ HxH\right] }=HO_{A}\left( x\right) H$, where $O_{A}\left(
x\right) :=\tbigcup\limits_{a\in A}x^{a}$ is the orbit of $x$ under the
action of $A$.
\end{corollary}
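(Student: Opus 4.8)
The plan is to reduce the whole statement to the single identity $(HxH)^a = H^a x^a H^a$, which holds for every $a\in A$ because $a$ acts as a group automorphism, so that $(h_1 x h_2)^a = h_1^a x^a h_2^a$ for all $h_1,h_2\in H$ and $1_G^a=1_G$. Before using it I would record two facts. First, by Example \ref{Example_Classical}(1), $\Pi=\left\{ HxH\mid x\in G\right\}$ is a d-partition of $G$ whose identity part is $H=H\,1_G\,H$; in particular $H$ is the unique part of $\Pi$ containing $1_G$. Second, for any automorphism $a$ the set $H^a=\left\{ h^a\mid h\in H\right\}$ is again a subgroup of $G$ and contains $1_G$.

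For the equivalence I would prove the two directions separately. If $H$ is $A$-invariant, i.e.\ $H^a=H$ for all $a\in A$, then for every $x\in G$ and $a\in A$ the identity above gives $(HxH)^a = H^a x^a H^a = H x^a H\in\Pi$, so the action of $A$ on $G$ induces an action on $\Pi$. Conversely, suppose $A$ acts on $\Pi$. Applying the hypothesis to the identity part $H=H\,1_G\,H$ yields $H^a=(H)^a\in\Pi$, and since $H^a$ contains $1_G$ while $H$ is the unique part of $\Pi$ containing $1_G$, disjointness of the parts forces $H^a=H$. As $a\in A$ was arbitrary, $H$ is $A$-invariant.

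Assuming now that $H$ is $A$-invariant, the formula for $U_{\left[ HxH\right] }$ follows by unwinding the definition from Theorem \ref{Th_CoarsenessViaAction} and using that setwise product distributes over arbitrary unions:
\[
U_{\left[ HxH\right] }=\bigcup_{a\in A}(HxH)^a=\bigcup_{a\in A}H x^a H=H\Bigl(\bigcup_{a\in A}x^a\Bigr)H=H\,O_{A}\left( x\right) H .
\]
I do not expect a genuine obstacle here, since all the content is packaged in the automorphism identity $(HxH)^a=H^a x^a H^a$. The only step that requires a moment's care is the forward direction of the equivalence, where one must invoke the identification of $H$ as the part of $\Pi$ containing $1_G$ in order to upgrade the membership $H^a\in\Pi$ to the equality $H^a=H$.
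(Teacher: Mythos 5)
Your proposal is correct and follows essentially the same route as the paper: the backward direction and the computation of $U_{\left[ HxH\right]}$ are identical, and your forward direction (that $H^{a}\in \Pi$ contains $1_{G}$, hence equals the unique part $H$ containing $1_{G}$) is exactly the argument the paper invokes by citing the proof of Theorem \ref{Th_CoarsenessViaAction}, where $e^{a}=e$ is established the same way; you have merely inlined it.
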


\begin{proof}
Suppose that $A$ acts on $\Pi $. Then, by the proof of Theorem \ref%
{Th_CoarsenessViaAction}, $U_{\left[ H\right] }=H$ and hence $H$ is
invariant under the action of $A$. Conversely, suppose that $A$ acts on $H$.
Then, for any $x\in $ $G$ and $a\in A$ we get, using $H^{a}=H$, that $\left(
HxH\right) ^{a}=Hx^{a}H\in \Pi $, and hence $A$ acts on $\Pi $. In this case:%
\begin{equation*}
U_{\left[ HxH\right] }=\tbigcup\limits_{a\in A}\left( HxH\right)
^{a}=\tbigcup\limits_{a\in A}Hx^{a}H=H\left( \tbigcup\limits_{a\in
A}x^{a}\right) H=HO_{A}\left( x\right) H\text{.}
\end{equation*}
\end{proof}

\begin{example}
\label{Example_ZnMultiplicative}Let $n>0$ be an integer, $G=\left( \mathbb{Z}%
_{n},+_{n}\right) $, $H\leq G$ and $A\leq Aut\left( G\right) =\left( \mathbb{%
Z}_{n}^{\ast },\cdot _{n}\right) $. Since $G$ is abelian, each double coset
of $H$ in $G$ is a single coset of the form $H+x$ for some $x\in G$, and
since $G$ is cyclic, $H$ is a characteristic subgroup of $G$ and hence it is
invariant under $A$. Therefore, by Corollary \ref{Coro_AInvariantH}, $A$
acts on the d-partition $\Pi =\left\{ H+x|x\in G\right\} $ and the orbits of
this action form the d-partition $\Pi ^{\prime }=\left\{ H+A\cdot _{n}x|x\in
G\right\} $. In the special case where $n=p$ is a prime, non-trivial
examples arise from the choice $H=\left\{ 0\right\} $. For this choice $\Pi $
is the singleton partition, and $\Pi ^{\prime }=\left\{ \left\{ 0\right\}
\right\} \cup \left\{ A\cdot _{p}x|x\in \mathbb{Z}_{p}^{\ast }\right\} $,
i.e., the non-identity parts of $\Pi ^{\prime }$ are the cosets of the
multiplicative subgroup $A\leq \mathbb{Z}_{p}^{\ast }$, all of which have
size $\left\vert A\right\vert $.
\end{example}

The following is a consequence of Corollary \ref{Coro_AInvariantH}.

\begin{corollary}
\label{Coro_Supplement_construction}Let $G$ be a group, $N$ $\trianglelefteq 
$ $G$ and $A\leq G$ such that $AN=G$. Then:

\begin{enumerate}
\item[(a)] $\Pi _{N}:=\left\{ \left( A\cap N\right) O_{A}\left( n\right)
|n\in N\right\} $ is a d-partition of $N$ with $e_{\Pi _{N}}=A\cap N$.

\item[(b)] Let $\Pi _{G}:=\left\{ AyA|y\in G\right\} $. Then $f:\Pi
_{G}\rightarrow \Pi _{N}$ defined by $f\left( AyA\right) =\left( A\cap
N\right) O_{A}\left( n\right) $, where $n$ is any element of $N$ satisfying $%
yn^{-1}\in A$, is an isomorphism of d-partitions.
\end{enumerate}
\end{corollary}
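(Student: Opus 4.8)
The plan is to derive both parts from the inducing machinery of Section~\ref{Sect_Inducing}, by realizing $\Pi_{N}$ as an $A$-coarsening (under the conjugation action) of a double coset partition of $N$, and by identifying $f$ with the operation of intersecting a part of $\Pi_{G}$ with $N$. For part~(a) I would invoke Corollary~\ref{Coro_AInvariantH} and Theorem~\ref{Th_CoarsenessViaAction} with ambient group $N$, acting group $A$ (acting by conjugation), and $H:=A\cap N$. First I would note that $A$ acts on $N$ as automorphisms: since $N\trianglelefteq G$, conjugation by any $a\in A$ restricts to an automorphism of $N$. Then $H=A\cap N\leq N$ is $A$-invariant, since for $h\in H$ and $a\in A$ we have $a^{-1}ha\in A$ (as $A$ is a subgroup) and $a^{-1}ha\in N$ (as $N\trianglelefteq G$). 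By Example~\ref{Example_Classical} the double coset partition $\Pi:=\{HxH\mid x\in N\}$ is a d-partition of $N$ with identity $H$, and Corollary~\ref{Coro_AInvariantH} gives that $A$ acts on $\Pi$ with $U_{[HxH]}=HO_{A}(x)H$. Theorem~\ref{Th_CoarsenessViaAction} then shows that $\Pi':=\{U_{[HxH]}\mid x\in N\}$ is a d-partition of $N$ with $e_{\Pi'}=e_{\Pi}=A\cap N$. To identify $\Pi'$ with $\Pi_{N}$ I would prove the set identity $HO_{A}(x)H=(A\cap N)O_{A}(x)$: the inclusion $HO_{A}(x)\subseteq HO_{A}(x)H$ is trivial, and the reverse follows from $h_{1}x^{a}h_{2}=(h_{1}h_{2})x^{ah_{2}}$, which absorbs the right $H$-factor; in particular $HO_{A}(x)$ is right $H$-invariant, a fact I will reuse below.

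For part~(b) the key reduction I would establish first is that $f(\pi)=\pi\cap N$ for every $\pi\in\Pi_{G}$. Writing $y=an$ with $a\in A$ and $n\in N$ (possible as $G=AN$), we have $yn^{-1}=a\in A$ and $AyA=AnA$; computing $AnA\cap N$ and using that $a_{1}na_{2}=(a_{1}a_{2})(a_{2}^{-1}na_{2})$ lies in $N$ exactly when $a_{1}a_{2}\in A\cap N$, one gets $\pi\cap N=HO_{A}(n)=(A\cap N)O_{A}(n)=f(\pi)$. Since $\pi\cap N$ depends only on $\pi$, this simultaneously shows that $f$ is well defined (independent of the choices of $n$ and of the representative $y$). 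Every part of $\Pi_{G}$ meets $N$ (the element $n$ above lies in $AyA\cap N$); as distinct parts are disjoint, their intersections with $N$ are distinct and nonempty, which gives injectivity, while surjectivity is immediate from $(A\cap N)O_{A}(n)=f(AnA)$. Hence $f$ is a bijection $\Pi_{G}\to\Pi_{N}$.

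It remains to show that $f$ is an isomorphism of d-partitions, which by Lemma~\ref{Lem_Def_DioidStructureConst} and Definition~\ref{Def_IsomorphicDPartitions} (using the $0/1$ convention of Remark~\ref{Rem_01}) amounts to the equivalence $\tau\subseteq\pi\sigma\iff f(\tau)\subseteq f(\pi)f(\sigma)$ for all $\pi,\sigma,\tau\in\Pi_{G}$, the left-hand product taken in $G$ and the right-hand one in $N$. The crux, and the step I expect to be the main obstacle, is the product identity
\[
(\pi\cap N)(\sigma\cap N)=(\pi\sigma)\cap N .
\]
The inclusion $\subseteq$ is clear. For $\supseteq$ I would take representatives $n_{1},n_{2}\in N$ of $\pi,\sigma$ and a typical element $a_{1}n_{1}a_{2}n_{2}a_{3}$ of $\pi\sigma$ that lies in $N$; projecting to $G/N\cong A/(A\cap N)$ forces $a_{1}a_{2}a_{3}\in H$, and the regrouping
\[
a_{1}n_{1}a_{2}n_{2}a_{3}=(a_{1}n_{1}a_{1}^{-1})\cdot\bigl((a_{1}a_{2})n_{2}(a_{1}a_{2})^{-1}(a_{1}a_{2}a_{3})\bigr)
\]
exhibits it as the product of the conjugate $a_{1}n_{1}a_{1}^{-1}\in\pi\cap N$ with the factor $(a_{1}a_{2})n_{2}(a_{1}a_{2})^{-1}(a_{1}a_{2}a_{3})\in(\sigma\cap N)H=\sigma\cap N$, where the last equality is the right $H$-invariance from part~(a). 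Granting this identity, $f(\pi)f(\sigma)=(\pi\cap N)(\sigma\cap N)=(\pi\sigma)\cap N$; and since each $f(\tau)=\tau\cap N$ is nonempty with the parts of $\Pi_{G}$ pairwise disjoint, $f(\tau)\subseteq(\pi\sigma)\cap N$ holds exactly when $\tau\subseteq\pi\sigma$. This is precisely the required equivalence, so $f$ is an isomorphism of d-partitions, completing the proof.
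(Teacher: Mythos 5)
Your proof is correct, and its architecture matches the paper's. Part (a) is obtained, exactly as in the paper, by applying Corollary \ref{Coro_AInvariantH} and Theorem \ref{Th_CoarsenessViaAction} to the double coset partition of $A\cap N$ in $N$; part (b) rests on the same two pillars, namely identifying $f$ with $\pi \mapsto \pi\cap N$ (this is precisely the content of the paper's Lemma \ref{Lem_DCtoAOrbitsCorresspondence}, which you re-derive inline rather than isolate) and reducing preservation of structure constants to the product identity $f(\pi)f(\sigma)=(\pi\sigma)\cap N$. The only genuine divergence is in how that identity is verified: the paper expands $(AnA)(An'A)=\bigcup_{a\in A}An^{a}n'A$, intersects each double coset with $N$ via its lemma, and recombines the orbits through $\bigcup_{a\in A}O_{A}\left(n^{a}n'\right)=O_{A}\left(n\right)O_{A}\left(n'\right)$, whereas you take a generic element $a_{1}n_{1}a_{2}n_{2}a_{3}\in(\pi\sigma)\cap N$, use the projection onto $G/N$ to force $a_{1}a_{2}a_{3}\in A\cap N$, and regroup it explicitly as a product of an element of $\pi\cap N$ and an element of $\sigma\cap N$ (your regrouping identity checks out, as does the right $H$-invariance $HO_{A}(x)H=HO_{A}(x)$ it relies on). Your version of this step is more elementary and self-contained, while the paper's is more structural and reuses its lemma a second time; both are sound. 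A minor merit of your write-up is that you state explicitly that isomorphism amounts to the equivalence $\tau\subseteq\pi\sigma \Longleftrightarrow f(\tau)\subseteq f(\pi)f(\sigma)$, and that the backward implication needs nonemptiness of $\tau\cap N$ together with disjointness of parts and the closure property of $\Pi_{G}$ --- a point the paper leaves implicit.
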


In order to prove Corollary \ref{Coro_Supplement_construction} we begin with
a lemma.

\begin{lemma}
\label{Lem_DCtoAOrbitsCorresspondence}Let $G$ be a group, $N$ $%
\trianglelefteq $ $G$ and $A\leq G$ such that $AN=G$. Let $y\in G$. Then
there exists $n\in N$ such that $yn^{-1}\in A$ and for any such $n$ it holds
that 
\begin{equation*}
\left( AyA\right) \cap N=\left( A\cap N\right) O_{A}\left( n\right)
=O_{A}\left( n\right) \left( A\cap N\right) \text{.}
\end{equation*}
\end{lemma}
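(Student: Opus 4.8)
The plan is to reduce the double coset $AyA$ to the double coset of a single element of $N$, to factor that double coset through the $A$-orbit $O_A(n)$ from both the left and the right, and then to intersect with $N$ by an elementary subgroup computation. Throughout I take $A$ to act on $N$ by conjugation (which is legitimate since $A\leq G$ and $N\trianglelefteq G$), writing $n^{b}:=b^{-1}nb$.

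First I would dispose of the existence claim and perform the reduction. Since $AN=G$ and $y\in G$, we may write $y=a_{0}n$ with $a_{0}\in A$ and $n\in N$, so that $yn^{-1}=a_{0}\in A$; this produces the required $n$. Now fix \emph{any} $n\in N$ with $a_{0}:=yn^{-1}\in A$, whence $y=a_{0}n$. Because $a_{0}\in A$ we have $Aa_{0}=A$, and therefore
\[
AyA=Aa_{0}nA=AnA .
\]
Thus it suffices to understand $AnA$ and its intersection with $N$.

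The heart of the argument is to express $AnA$ through the orbit $O_{A}\left(n\right)$ in two symmetric ways. For all $a,b\in A$ one checks directly that $anb=(ab)\,n^{b}$ and also $anb=n^{a^{-1}}(ab)$. Fixing $b$ and letting $a$ range over $A$ gives $\{anb\mid a\in A\}=An^{b}$; fixing $a$ and letting $b$ range over $A$ gives $\{anb\mid b\in A\}=n^{a^{-1}}A$. Taking unions and using that $\{n^{b}\mid b\in A\}=\{n^{a^{-1}}\mid a\in A\}=O_{A}\left(n\right)$ yields the two factorizations
\[
AnA=A\cdot O_{A}\left(n\right)=O_{A}\left(n\right)\cdot A .
\]
Note also that $O_{A}\left(n\right)\subseteq N$, since $n\in N$ and $N\trianglelefteq G$ force every conjugate $n^{a}$ into $N$.

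Finally I would intersect with $N$. Writing a typical element of $A\cdot O_{A}\left(n\right)$ as $am$ with $a\in A$ and $m\in O_{A}\left(n\right)\subseteq N$, the product $am$ lies in $N$ if and only if $a=(am)m^{-1}\in N$, that is, $a\in A\cap N$; conversely every product of an element of $A\cap N$ with an element of $O_{A}\left(n\right)$ lies in $N$. Hence $\left(A\cdot O_{A}\left(n\right)\right)\cap N=(A\cap N)\,O_{A}\left(n\right)$, and the identical argument applied to the factorization $O_{A}\left(n\right)\cdot A$ gives $\left(O_{A}\left(n\right)\cdot A\right)\cap N=O_{A}\left(n\right)(A\cap N)$. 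Combining these with $AyA=AnA$ produces the asserted chain of equalities. The only step demanding genuine care is the orbit factorization of $AnA$: one must keep the conjugation bookkeeping straight and verify that the reindexings $a\mapsto ab$ and $b\mapsto ab$ sweep out all of $A$, which is precisely what collapses the double coset into a single product of $A$ with the orbit; the remaining manipulations are routine.
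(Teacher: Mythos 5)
Your proof is correct. Its computational core is the same as the paper's: the conjugation identity $anb=(ab)\,n^{b}=n^{a^{-1}}(ab)$ (the paper writes this as $a_{1}^{-1}na_{2}=n^{a_{1}}\left(a_{1}^{-1}a_{2}\right)$), plus the observation that when a product lies in $N$ and one factor lies in $N$, the $A$-factor is forced into $A\cap N$. The organization, however, differs in a worthwhile way. You first establish the global factorization $AnA=A\,O_{A}\left(n\right)=O_{A}\left(n\right)A$ as subsets of $G$, and only afterwards intersect with $N$, using the general fact that $\left(A\cdot M\right)\cap N=\left(A\cap N\right)M$ for any $M\subseteq N$. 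The paper instead proves the two inclusions of $\left(AnA\right)\cap N=O_{A}\left(n\right)\left(A\cap N\right)$ in a single intertwined element-chase, and must then justify the remaining equality $\left(A\cap N\right)O_{A}\left(n\right)=O_{A}\left(n\right)\left(A\cap N\right)$ by a separate argument, namely that $A$ (hence $A\cap N$) normalizes the orbit $O_{A}\left(n\right)$. Your two-sided factorization renders that commutation step unnecessary: both forms are equal to $\left(AnA\right)\cap N$, hence to each other. What your route buys is a clean, reusable intermediate statement (double coset equals subgroup times orbit) and the automatic symmetry of the conclusion; what the paper's route buys is marginal directness, at the cost of invoking normalization of the orbit as an extra ingredient.
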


\begin{proof}
The existence of $n$ is clear from $AN=G$. Fix $n\in N$ for which $%
yn^{-1}\in A$. The equality $\left( A\cap N\right) O_{A}\left( n\right)
=O_{A}\left( n\right) \left( A\cap N\right) $ follows from the fact that $%
O_{A}\left( n\right) $ is normalized by $A$. It remains to prove $\left(
AyA\right) \cap N=O_{A}\left( n\right) \left( A\cap N\right) $. First we
prove $\left( AyA\right) \cap N\subseteq O_{A}\left( n\right) \left( A\cap
N\right) $. Let $g\in \left( AyA\right) \cap N=\left( AnA\right) \cap N$.
Then there exist $a_{1},a_{2}\in A$ such that $g=a_{1}^{-1}na_{2}=n^{a_{1}}%
\left( a_{1}^{-1}a_{2}\right) $. Note that $a_{1}^{-1}a_{2}\in A$ and $%
n^{a_{1}}\in N$. Since $g\in N$ we get $a_{1}^{-1}a_{2}=\left(
n^{a_{1}}\right) ^{-1}g\in N$. Therefore $a_{1}^{-1}a_{2}\in A\cap N$ and 
\begin{equation*}
g=n^{a_{1}}\left( a_{1}^{-1}a_{2}\right) \in n^{a_{1}}\left( A\cap N\right)
\subseteq O_{A}\left( n\right) \left( A\cap N\right) .
\end{equation*}%
Now we prove the reverse inclusion. Let $g\in O_{A}\left( n\right) \left(
A\cap N\right) $. Since both $O_{A}\left( n\right) $ and $\left( A\cap
N\right) $ are subsets of $N$ we get $g\in N$, and it remains to prove $g\in
AyA$. There exist $a\in A$ and $n^{\prime }\in A\cap N$ such that 
\begin{equation*}
g=n^{a}n^{\prime }=a^{-1}nan^{\prime }\text{.}
\end{equation*}%
But $an^{\prime }\in A$ since $n^{\prime }\in A\cap N$ so $g\in AnA=AyA$ as
required.
\end{proof}

\begin{proof}[Proof of Corollary \protect\ref{Coro_Supplement_construction}]

(a) First note that the elements of $A$ act as automorphisms on $N$ via $%
n^{a}=a^{-1}na$ for all $a\in A$ and $n\in N$. Let $\Pi _{N}^{\prime
}:=\left\{ \left( A\cap N\right) n\left( A\cap N\right) |n\in N\right\} $ be
the set of double cosets of $A\cap N\leq N$. Then $\Pi _{N}^{\prime }$ is a
d-partition of $N$, and since $A$ leaves $A\cap N$ invariant, $A$ acts on $%
\Pi _{N}^{\prime }$ by Corollary \ref{Coro_AInvariantH}. Let $n\in N$ be
arbitrary and set $\pi :=\left( A\cap N\right) n\left( A\cap N\right) $. By
Corollary \ref{Coro_AInvariantH} and Lemma \ref%
{Lem_DCtoAOrbitsCorresspondence}, we get%
\begin{equation*}
U_{\left[ \pi \right] }=\left( A\cap N\right) O_{A}\left( n\right) \left(
A\cap N\right) =\left( A\cap N\right) ^{2}O_{A}\left( n\right) =\left( A\cap
N\right) O_{A}\left( n\right) \text{,}
\end{equation*}%
and now claim (a) follows from Theorem \ref{Th_CoarsenessViaAction}.

(b) The map defined by $AyA\longmapsto \left( AyA\right) \cap N$ from $\Pi
_{G}$ into the power set of $N$ is clearly well-defined. By Lemma \ref%
{Lem_DCtoAOrbitsCorresspondence} it is equal to $f$. Hence $f$ is a
well-defined $\Pi _{G}\rightarrow \Pi _{N}$ map. Its surjectivity follows
from $G=\tbigcup\limits_{y\in G}AyA$ and its injectivity follows from the
fact that distinct double cosets of $A$ have empty intersection. It remains
to prove that $f$ preserves the structure constants. Let $y,y^{\prime }\in G$
and $n,n^{\prime }\in N$ be arbitrary such that $yn^{-1},y^{\prime }\left(
n^{\prime }\right) ^{-1}\in A$. It would suffice to prove that%
\begin{equation*}
N\cap \left( \left( AyA\right) \left( Ay^{\prime }A\right) \right) =\left(
\left( A\cap N\right) O_{A}\left( n\right) \right) \left( \left( A\cap
N\right) O_{A}\left( n^{\prime }\right) \right) \text{.}
\end{equation*}%
We have%
\begin{equation*}
\left( AyA\right) \left( Ay^{\prime }A\right) =AnAn^{\prime }A=A\left(
\tbigcup\limits_{a\in A}aa^{-1}nan^{\prime }\right) A=\tbigcup\limits_{a\in
A}An^{a}n^{\prime }A\text{.}
\end{equation*}%
By Lemma \ref{Lem_DCtoAOrbitsCorresspondence} we have $N\cap \left(
An^{a}n^{\prime }A\right) =O_{A}\left( n^{a}n^{\prime }\right) \left( A\cap
N\right) $. Therefore,%
\begin{gather*}
N\cap \left( AyA\right) \left( Ay^{\prime }A\right) =N\cap
\tbigcup\limits_{a\in A}An^{a}n^{\prime }A=\tbigcup\limits_{a\in A}N\cap
\left( An^{a}n^{\prime }A\right) = \\
=\tbigcup\limits_{a\in A}O_{A}\left( n^{a}n^{\prime }\right) \left( A\cap
N\right) =\left( A\cap N\right) \tbigcup\limits_{a\in A}O_{A}\left(
n^{a}n^{\prime }\right) \text{.}
\end{gather*}%
Now 
\begin{gather*}
\tbigcup\limits_{a\in A}O_{A}\left( n^{a}n^{\prime }\right)
=\tbigcup\limits_{a\in A}\tbigcup\limits_{b\in A}\left( n^{a}n^{\prime
}\right) ^{b}=\tbigcup\limits_{a\in A}\tbigcup\limits_{b\in A}n^{ab}\left(
n^{\prime }\right) ^{b} \\
=\tbigcup\limits_{a_{1}\in A}\tbigcup\limits_{a_{2}\in A}n^{a_{1}}\left(
n^{\prime }\right) ^{a_{2}}=O_{A}\left( n\right) O_{A}\left( n^{\prime
}\right) \text{.}
\end{gather*}%
Hence, since $A\cap N=\left( A\cap N\right) ^{2}$ commutes with $O_{A}\left(
n\right) $ and $O_{A}\left( n^{\prime }\right) $ we get%
\begin{equation*}
N\cap \left( AyA\right) \left( Ay^{\prime }A\right) =\left( \left( A\cap
N\right) O_{A}\left( n\right) \right) \left( \left( A\cap N\right)
O_{A}\left( n^{\prime }\right) \right) \text{.}
\end{equation*}
\end{proof}

\begin{remark}
\label{Rem_T4}Corollary \ref{Coro_Supplement_construction} can also be
proved using Tamaschke's second isomorphism theorem 2.13 in \cite%
{Tamaschke1968} as follows (we use the notation of \cite{Tamaschke1968}).
Denote by $T^{\left( G\right) }$ and by $T^{\left( N\right) }$ the
S-semigroups on $G$ and on $N$ associated with $\Pi _{G}$ and $\Pi _{N}$
respectively. Since $A\cap N\trianglelefteq A$, we can apply Corollary \ref%
{Coro_AInvariantH} with $H=A\cap N$ and obtain that $\Upsilon :=\left\{
\left( A\cap N\right) O_{A}\left( x\right) \left( A\cap N\right) |x\in
G\right\} $ is a d-partition of $G$. Let $T$ be the corresponding
S-semigroup. We observe that $A$ and $N$ are $T$-subgroups of $G$ with%
\begin{eqnarray*}
T_{A} &=&\left[ \left( A\cap N\right) O_{A}\left( a\right) \left( A\cap
N\right) |a\in A\right] \\
T_{N} &=&\left[ \left( A\cap N\right) O_{A}\left( n\right) \left( A\cap
N\right) |n\in N\right] =T^{\left( N\right) }\text{.}
\end{eqnarray*}%
Furthermore, $N$ is $T$-normal since $N\trianglelefteq G$, and $A$ is $T$%
-normal since it normalizes both $A\cap N$ and $O_{A}\left( x\right) $ for
any $x\in G$. Now we can apply \cite[Theorem 2.13]{Tamaschke1968}, with $K=A$
and $H=N$. By \cite[Theorem 2.13]{Tamaschke1968} (3) we get 
\begin{gather*}
T_{H}/T_{H\cap K}=T_{N}/T_{N\cap A}=T_{N}=T^{\left( N\right) } \\
T_{HK}/T_{K}=T/T_{A}=\left[ AyA|y\in A\right] =T^{\left( G\right) }\text{,}
\end{gather*}%
and the isomorphism claim follows.
\end{remark}

Next we prove that d-partitions lift from quotient groups.

\begin{proof}[Proof of Theorem \protect\ref{Th_LiftFromQuotient}]
Let $\nu :G\rightarrow \overline{G}$ be the natural projection and for any $%
\overline{S}\subseteq \overline{G}$ let $\nu ^{-1}\left( \overline{S}\right) 
$ be the full preimage of $\overline{S}$ in $G$. Recall that $S\subseteq G$
is the full preimage of $\overline{S}$ in $G$ if and only if $\nu \left(
S\right) =\overline{S}$\ and $NS=S$. For any $\overline{\pi }\in $ $%
\overline{\Pi }$, denote $\pi :=\nu ^{-1}\left( \overline{\pi }\right) $.
Now let $\overline{\pi }\in $ $\overline{\Pi }$. Then $N\pi =\pi $ implies
that $\pi $ is a union of cosets of $N$. Since two distinct cosets of $N$
are disjoint, and since $\overline{\pi _{1}}\neq \overline{\pi _{2}}$
implies $\overline{\pi _{1}}\cap \overline{\pi _{2}}=\emptyset $, we get
that $\overline{\pi _{1}}\neq \overline{\pi _{2}}$ implies $\pi _{1}\cap \pi
_{2}=\emptyset $. Since $\pi $ is the full preimage of $\overline{\pi }$, $%
\overline{G}=\tbigcup\limits_{\overline{\pi }\in \overline{\Pi }}\overline{%
\pi }$ implies $G=\tbigcup\limits_{\pi \in \Pi }\pi $. Therefore $\Pi $ is a
partition of $G$.

Let $\pi _{1},\pi _{2}\in \Pi $. Then $\nu \left( \pi _{1}\pi _{2}\right)
=\nu \left( \pi _{1}\right) \nu \left( \pi _{2}\right) $ is a union of parts
of $\overline{\Pi }$. Hence $\nu ^{-1}\left( \nu \left( \pi _{1}\pi
_{2}\right) \right) $ is a union of parts of $\Pi $. But $N\pi _{1}=\pi
_{1}\ $implies $N\left( \pi _{1}\pi _{2}\right) =\pi _{1}\pi _{2}$ so $\pi
_{1}\pi _{2}=\nu ^{-1}\left( \nu \left( \pi _{1}\pi _{2}\right) \right) $.
This proves that $\Pi $ is closed under multiplication. Furthermore, $e_{\Pi
}=\nu ^{-1}\left( e_{\overline{\Pi }}\right) $ and since for any $\pi \in
\Pi $, $\pi ^{-1}=\nu ^{-1}\left( \overline{\pi }^{-1}\right) $, we get the
inverse property of $\Pi $ from that of $\overline{\Pi }$.
\end{proof}

We note that the converse of Theorem \ref{Th_LiftFromQuotient} is not true,
namely, if $G$ is a group, $N\trianglelefteq G$, $\Pi $ is a d-partition of $%
G$, and $\nu :G\rightarrow \overline{G}:=G/N$ is the natural projection,
then $\left\{ \nu \left( \pi \right) |\pi \in \Pi \right\} $ is not
necessarily a d-partition. An easy counterexample is provided by $G=\left( 
\mathbb{Z},+\right) $, $N=2\mathbb{Z}$ and $\Pi :=\left\{ \left\{ 0\right\} ,%
\mathbb{Z}\backslash \left\{ 0\right\} \right\} $. Clearly, $\nu \left(
\left\{ 0\right\} \right) =\left\{ 0\right\} $ and $\nu \left( \mathbb{Z}%
\backslash \left\{ 0\right\} \right) =\left\{ 0,1\right\} $ which are
distinct but have a non-empty intersection. Nevertheless, there are examples
of d-partitions of groups which are mapped nicely to quotients, e.g., double
coset partitions.

\section{Three part d-partitions of $\left( \mathbb{Z}_{p},+_{p}\right) $%
\label{Sect_DPartitonsOfZp}}

In this section we study d-partitions of the form $\Pi =\left\{ \pi _{0},\pi
_{1},\pi _{2}\right\} $, of $\left( \mathbb{Z}_{p},+_{p}\right) $, where $p$
is a prime. Note that one can assume, without loss of generality, $\pi
_{0}:=\left\{ 0\right\} $, and $\left\vert \pi _{1}\right\vert \leq
\left\vert \pi _{2}\right\vert $. Our results indicate that these
d-partitions involve interesting structures related to fundamental questions
in additive combinatorics (see Section \ref{Subsect_AdditiveCombResults} for
the concepts used in the analysis). Recall, in comparison, that the
S-partitions of these groups were fully classified in \cite%
{BGordonPacific1964}\ (see Theorem \ref{Th_BGordon}).

\subsection{Isomorphism Types of $3$-part d-partitions of $\left( \mathbb{Z}%
_{p},+_{p}\right) $. \label{Subsect_StructureConstsFor3part}}

We begin with determining the isomorphism types of the $3$-part d-partitions
of $\left( \mathbb{Z}_{p},+_{p}\right) $. Two "small $p$" cases are
summarized in the following remark.

\begin{remark}
\label{Rem_SmallpIsos}One can check that the only possibilities for $\Pi $
for the primes $3$ and $5$ are the singleton partition if $p=3$ and $\Pi
=\left\{ \left\{ 0\right\} ,\left\{ 1,4\right\} ,\left\{ 2,3\right\}
\right\} $ if $p=5$. It turns out that the isomorphism types of these two
d-partitions do not repeat for larger primes.
\end{remark}

From now on we assume $p>5$.

\begin{lemma}
\label{Lem_StructureConstsFor3Parts}Let $p>5$ be a prime, $G=\left( \mathbb{Z%
}_{p},+_{p}\right) $, and $\Pi =\left\{ \pi _{0},\pi _{1},\pi _{2}\right\} $
a $3$-part d-partition of $G$, where $\pi _{0}=\left\{ 0\right\} $ and $%
\left\vert \pi _{1}\right\vert \leq \left\vert \pi _{2}\right\vert $. Then
one of the following \emph{(a)} and \emph{(b)} holds true:

\begin{enumerate}
\item[(a)] $\pi _{k}=-\pi _{k}$ for $k=1,2$, $\pi _{1}+\pi _{2}=\pi _{1}\cup
\pi _{2}$ and $\pi _{2}+\pi _{2}=G$. In addition, either:
\end{enumerate}

\qquad \emph{(a1)} $\pi _{1}+\pi _{1}=\pi _{0}\cup \pi _{2}$, or:

\qquad \emph{(a2)} $\pi _{1}+\pi _{1}=G$.

\begin{enumerate}
\item[(b)] $\pi _{1}=-\pi _{2}$, $\pi _{1}+\pi _{2}=G$, and $\pi _{1}+\pi
_{1}=\pi _{2}+\pi _{2}=\pi _{1}\cup \pi _{2}$.
\end{enumerate}
\end{lemma}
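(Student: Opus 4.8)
The plan is to pin down the identity, split into two cases via the inverse property, and then determine each of the three nontrivial products $\pi_i+\pi_j$ by a uniform recipe that combines a symmetry observation (which decides whether $0$ lies in the product), the Cauchy--Davenport bound, and an elementary subgroup argument.

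First I would note that $e_\Pi$ is a subgroup of $G$ by Lemma~\ref{Lem_DioidIdentityIsSubgroup}(b); since $\mathbb{Z}_p$ has no proper nontrivial subgroup and $\Pi$ has three parts, $e_\Pi=\{0\}=\pi_0$. The involution $g\mapsto -g$ permutes $\Pi$ and fixes $\pi_0$, so it either fixes both $\pi_1,\pi_2$---this is case (a)---or interchanges them, forcing $|\pi_1|=|\pi_2|$---this is case (b). Writing $a:=|\pi_1|\le b:=|\pi_2|$ with $a+b=p-1$, I would record that $a\ge 2$ (in case (a) a symmetric singleton $\{x\}$ would require $x=-x$, impossible; in case (b) $a=b=\tfrac{p-1}{2}\ge 3$).

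The heart of the proof is the following recipe, applied to each product separately. Since $\pi_i+\pi_j$ is a union of parts, it suffices to decide which of $\pi_0,\pi_1,\pi_2$ occur. Whether $\pi_0$ occurs is settled by symmetry: $0\in\pi_i+\pi_j$ iff $(-\pi_i)\cap\pi_j\neq\emptyset$. Thus in case (a) one gets $0\in\pi_1+\pi_1$, $0\in\pi_2+\pi_2$, $0\notin\pi_1+\pi_2$, while in case (b) one gets $0\in\pi_1+\pi_2$, $0\notin\pi_1+\pi_1$, $0\notin\pi_2+\pi_2$. This confines each product to a short list of admissible unions. I would then invoke Cauchy--Davenport (Theorem~\ref{Th_CauchyDavenport}) to bound sizes from below---$|\pi_1+\pi_2|\ge a+b-1=p-2$ and $|\pi_k+\pi_k|\ge 2|\pi_k|-1$---and compare with the admissible union-sizes built from $1,a,b$ with $a+b=p-1$. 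Using $p>5$, this immediately forces $\pi_1+\pi_2=\pi_1\cup\pi_2$ in case (a), forces $\pi_1+\pi_2=G$ and $\pi_1+\pi_1=\pi_2+\pi_2=\pi_1\cup\pi_2$ in case (b), and narrows the case-(a) products $\pi_1+\pi_1$ and $\pi_2+\pi_2$ down to at most the options $\pi_0\cup\pi_1$, $\pi_0\cup\pi_2$, $G$.

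The remaining exclusions are where the real work lies, and I would dispatch them by a subgroup argument: if $\pi_k+\pi_k=\pi_0\cup\pi_k$ then $\pi_0\cup\pi_k$ is closed under addition, hence (as a nonempty subset of a finite group) a subgroup of $\mathbb{Z}_p$ of size $1+|\pi_k|$; but $2\le|\pi_k|\le p-3$ excludes the sizes $1$ and $p$, a contradiction. This kills $\pi_1+\pi_1=\pi_0\cup\pi_1$ and $\pi_2+\pi_2=\pi_0\cup\pi_2$. The last possibility, $\pi_2+\pi_2=\pi_0\cup\pi_1$, is excluded by size alone: the Cauchy--Davenport bound $2b-1\le 1+a$ together with $a\le b$ forces $a=b=2$, i.e.\ $p=5$, against $p>5$. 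What survives is exactly $\pi_2+\pi_2=G$ and $\pi_1+\pi_1\in\{\pi_0\cup\pi_2,\,G\}$, which is the split into (a1) and (a2). The main obstacle is the bookkeeping that interlocks the Cauchy--Davenport estimates, the arithmetic of the sizes $1,a,b$, and the subgroup argument; I expect $p>5$ to be exactly what is needed to close the borderline size comparisons, which degenerate precisely at $p=5$ with $a=b=2$.
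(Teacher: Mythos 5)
Your proof is correct, and it shares the paper's overall skeleton---the inverse-property dichotomy ($\pi_k=-\pi_k$ for $k=1,2$ versus $\pi_1=-\pi_2$), deciding whether $0$ lies in each product by symmetry, Cauchy--Davenport lower bounds, and the closure property restricting each product to a union of parts---but it diverges in how the degenerate possibilities are eliminated, and this difference is substantive. The paper works in the opposite order: it first proves the containments $\pi_2\subseteq\pi_i+\pi_j$ for all $i,j\in\{1,2\}$, and $\pi_1\subseteq\pi_i+\pi_j$ whenever at least one index is $2$, and in both contradiction arguments the borderline case $\left\vert\pi_1\right\vert\leq 2$ is disposed of by invoking Lemma~\ref{Lem_ZpSingletonOrDoubleImplyMult} (which forces $\Pi$ to be the singleton partition or $\Pi\left(\left\{-1,1\right\}\right)$, hence $p\leq 5$); only then does it split according to the inverse property. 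You split into cases first, enumerate the admissible unions, and replace Lemma~\ref{Lem_ZpSingletonOrDoubleImplyMult} by two self-contained observations: a symmetric part avoiding $0$ has size at least $2$, and $\pi_k+\pi_k=\pi_0\cup\pi_k$ would make $\pi_0\cup\pi_k$ a nonempty finite additively closed set, hence a subgroup of $\mathbb{Z}_p$ of order strictly between $1$ and $p$, which is impossible. Your subgroup argument is genuinely needed, not cosmetic: pure size-counting cannot rule out $\pi_1+\pi_1=\pi_0\cup\pi_1$ when $\left\vert\pi_1\right\vert=2$, since Cauchy--Davenport then gives only $\left\vert\pi_1+\pi_1\right\vert\geq 3=\left\vert\pi_0\cup\pi_1\right\vert$, and without the auxiliary lemma nothing else excludes small $\pi_1$ in case (a). What your route buys is independence from the classification lemma for small parts (making the proof more elementary and self-contained); what the paper's route buys is a uniform treatment of both cases through its containment steps, reusing a lemma it has proved and needs elsewhere anyway.
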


\begin{proof}

(i) Let $i,j\in \left\{ 1,2\right\} $. Theorem \ref{Th_CauchyDavenport} gives%
\begin{equation}
\left\vert \pi _{i}+\pi _{j}\right\vert \geq \min \left( \left\vert \pi
_{i}\right\vert +\left\vert \pi _{j}\right\vert -1,p\right) \geq 2\left\vert
\pi _{1}\right\vert -1\text{,}  \label{Eq_CDFor3Parts}
\end{equation}%
where for the second inequality we used $\left\vert \pi _{1}\right\vert \leq
\left\vert \pi _{2}\right\vert $.

(ii) We prove $\pi _{2}\subseteq \pi _{i}+\pi _{j}$. Assume by contradiction
that this is not the case. Since by the closure property of a d-partition $%
\pi _{i}+\pi _{j}$ is a union of parts, we get $\pi _{i}+\pi _{j}\subseteq
\left\{ 0\right\} \cup \pi _{1}$ which gives, using Equation (\ref%
{Eq_CDFor3Parts}), $2\left\vert \pi _{1}\right\vert -1\leq \left\vert \pi
_{1}\right\vert +1$. This implies $\left\vert \pi _{1}\right\vert \leq 2$,
and hence, by Lemma \ref{Lem_ZpSingletonOrDoubleImplyMult}, $\left\vert \pi
_{1}\right\vert =\left\vert \pi _{2}\right\vert \leq 2$ in contradiction to $%
p>5$.

(iii) Suppose that at least one of $i$ and $j$ is not equal to $1$. Note
that under this assumption the first inequality in (\ref{Eq_CDFor3Parts})
gives $\left\vert \pi _{i}+\pi _{j}\right\vert \geq p-2$. We prove that $\pi
_{1}\subseteq \pi _{i}+\pi _{j}$. Assume by contradiction $\left( \pi
_{i}+\pi _{j}\right) \cap \pi _{1}=\emptyset $. Then $\pi _{i}+\pi
_{j}\subseteq \left\{ 0\right\} \cup \pi _{2}$ and we get $p-2\leq
\left\vert \pi _{i}+\pi _{j}\right\vert \leq \left\vert \pi _{2}\right\vert
+1$ implying $\left\vert \pi _{1}\right\vert \leq 2$. Applying Lemma \ref%
{Lem_ZpSingletonOrDoubleImplyMult}, we obtain a contradiction as in (ii).

(iv) Now we are ready to prove the claim of the lemma. By the inverse
property of a d-partition and $\left\vert \Pi \right\vert =3$, either $\pi
_{k}=-\pi _{k}$ for $k=1,2$ or $\pi _{1}=-\pi _{2}$. Suppose $\pi _{k}=-\pi
_{k}$ for $k=1,2$. For any $x\in \pi _{1}$ and $y\in \pi _{2}$, $y\neq -x$
and hence $x+y\neq 0$. Hence $0\notin \pi _{1}+\pi _{2}$ and $\pi _{1}+\pi
_{2}=\pi _{1}\cup \pi _{2}$ by (ii) and (iii). On the other hand, $0\in \pi
_{2}+\pi _{2}$ and therefore $\pi _{2}+\pi _{2}=G$. Similarly, $0\in \pi
_{1}+\pi _{1}$ and hence either $\pi _{1}+\pi _{1}=\pi _{0}\cup \pi _{2}$,
which gives (a1), or $\pi _{1}+\pi _{1}=G$, which gives (a2). Now suppose
that $\pi _{1}=-\pi _{2}$. Here $0\in \pi _{1}+\pi _{2}$ and hence $\pi
_{1}+\pi _{2}=G$. On the other hand, $0\notin \pi _{i}+\pi _{i}$ where $%
i=1,2 $. Since $\pi _{1}=-\pi _{2}$ we have $\left\vert \pi _{1}\right\vert
=\left\vert \pi _{2}\right\vert $ so in this case we can prove $\pi
_{2}\subseteq \pi _{1}+\pi _{1}$ using the same reasoning as in (iii). Hence
we get $\pi _{1}+\pi _{1}=\pi _{2}+\pi _{2}=\pi _{1}\cup \pi _{2}$. This
gives case (b) of the lemma.
\end{proof}

We can recast Lemma \ref{Lem_StructureConstsFor3Parts} in terms of the
structure constants $d_{i,j}^{k}$ of the induced dioids (see Lemma \ref%
{Lem_Def_DioidStructureConst} and Remark \ref{Rem_01}). We have $\pi
_{i}+\pi _{j}=\tbigcup\limits_{k=0}^{2}d_{i,j}^{k}\pi _{k}$, where $i,j\in
\left\{ 0,1,2\right\} $. It is sufficient to consider $i\leq j\in \left\{
1,2\right\} $.

\begin{corollary}
\label{Coro_StructureConstsFor3Parts}Under the assumptions and notation of
Lemma \ref{Lem_StructureConstsFor3Parts}:

\begin{enumerate}
\item[(a)] If $\pi _{1}=-\pi _{1}$ then 
\begin{eqnarray*}
d_{1,1}^{0}
&=&d_{1,1}^{2}=d_{1,2}^{1}=d_{1,2}^{2}=d_{2,2}^{0}=d_{2,2}^{1}=d_{2,2}^{2}=1%
\text{,} \\
d_{1,2}^{0} &=&0\text{,}
\end{eqnarray*}%
while $d_{1,1}^{1}$ is either $0$ or $1$.

\item[(b)] If $\pi _{1}=-\pi _{2}$ then 
\begin{eqnarray*}
d_{1,1}^{1}
&=&d_{1,1}^{2}=d_{1,2}^{0}=d_{1,2}^{1}=d_{1,2}^{2}=d_{2,2}^{1}=d_{2,2}^{2}=1%
\text{,} \\
d_{1,1}^{0} &=&d_{2,2}^{0}=0\text{.}
\end{eqnarray*}
\end{enumerate}
\end{corollary}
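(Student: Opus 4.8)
The plan is to observe that this corollary is a direct dictionary translation of Lemma~\ref{Lem_StructureConstsFor3Parts} into the language of structure constants, so the only genuine work is to fix the reading convention and to match the two hypotheses of the corollary with the two cases of the lemma.

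First I would recall from Lemma~\ref{Lem_Def_DioidStructureConst} and Remark~\ref{Rem_01} what $d_{i,j}^{k}$ means concretely: since $\pi_i + \pi_j$ is a union of parts (the closure property), for each $k$ exactly one of $\pi_k \subseteq \pi_i + \pi_j$ or $\pi_k \cap (\pi_i+\pi_j) = \emptyset$ holds, and by definition $d_{i,j}^{k}=1$ in the former case and $d_{i,j}^{k}=0$ in the latter. Thus reading off a structure constant amounts to checking whether the part $\pi_k$ appears in the union describing $\pi_i+\pi_j$. I would also restrict attention to $i\leq j\in\{1,2\}$, as noted before the corollary, since the remaining constants involve $\pi_0=\{0\}$ and are trivial.

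Next I would align the hypotheses with the lemma. Applying the inverse property to the three-part partition, either $\pi_k = -\pi_k$ for $k=1,2$, or $\pi_1 = -\pi_2$; these are precisely the two cases of Lemma~\ref{Lem_StructureConstsFor3Parts}. The hypothesis $\pi_1 = -\pi_1$ of part~(a) rules out $\pi_1 = -\pi_2$, because $\pi_1$ and $\pi_2$ are distinct parts, so it places us in case~(a) of the lemma; symmetrically, $\pi_1 = -\pi_2$ is exactly case~(b).

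Finally I would read off each constant. In part~(a): from $\pi_1+\pi_2 = \pi_1\cup\pi_2$ we get $d_{1,2}^{0}=0$ and $d_{1,2}^{1}=d_{1,2}^{2}=1$; from $\pi_2+\pi_2 = G = \pi_0\cup\pi_1\cup\pi_2$ we get $d_{2,2}^{0}=d_{2,2}^{1}=d_{2,2}^{2}=1$; and since the two alternatives $\pi_1+\pi_1 = \pi_0\cup\pi_2$ and $\pi_1+\pi_1 = G$ both contain $\pi_0$ and $\pi_2$ but differ exactly on whether they contain $\pi_1$, we obtain $d_{1,1}^{0}=d_{1,1}^{2}=1$ with $d_{1,1}^{1}$ equal to $0$ in subcase~(a1) and to $1$ in subcase~(a2). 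In part~(b): $\pi_1+\pi_2 = G$ yields $d_{1,2}^{0}=d_{1,2}^{1}=d_{1,2}^{2}=1$, while $\pi_1+\pi_1 = \pi_2+\pi_2 = \pi_1\cup\pi_2$ yields $d_{1,1}^{0}=d_{2,2}^{0}=0$ and $d_{1,1}^{1}=d_{1,1}^{2}=d_{2,2}^{1}=d_{2,2}^{2}=1$. There is no real obstacle here: the computation is pure bookkeeping, and the only point meriting a sentence of justification is the case matching in the preceding paragraph, namely that $\pi_1=-\pi_1$ forces the symmetric case and hence $\pi_2=-\pi_2$ as well.
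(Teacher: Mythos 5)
Your proposal is correct and matches the paper's treatment: the paper states Corollary \ref{Coro_StructureConstsFor3Parts} without a separate proof, presenting it as a direct recasting of Lemma \ref{Lem_StructureConstsFor3Parts} via the dictionary $d_{i,j}^{k}=1 \Leftrightarrow \pi_{k}\subseteq \pi_{i}+\pi_{j}$, which is exactly what you do. Your one substantive observation---that $\pi_{1}=-\pi_{1}$ and $\pi_{1}=-\pi_{2}$ are mutually exclusive since $\pi_{1}\neq\pi_{2}$, so each hypothesis selects the corresponding case of the lemma---is the right (and only) point needing justification.
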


We are ready to prove Theorem \ref{Th_3PartIsoClassification} which
classifies the isomorphism types of the $3$-part d-partitions of a cyclic
group of prime order.

\begin{proof}[Proof of Theorem \protect\ref{Th_3PartIsoClassification}]
Let $\Pi =\left\{ \pi _{0},\pi _{1},\pi _{2}\right\} $, with $\pi
_{0}=\left\{ 0\right\} $ and $\left\vert \pi _{1}\right\vert \leq \left\vert
\pi _{2}\right\vert $, be a $3$-part partition of $G$. By Remark \ref%
{Rem_SmallpIsos} we may assume $p>5$. If $\Pi $ is a d-partition then, by
Lemma \ref{Lem_StructureConstsFor3Parts}, exactly one of (3)-(5) holds (note
that if $\pi _{2}$ is an arithmetic progression of size $\frac{p-1}{2}$ then 
$\left\vert \pi _{2}+\pi _{2}\right\vert =p-2$ and hence $\pi _{2}+\pi
_{2}\neq G$).

We now prove that the conditions stated in (3)-(5) are sufficient for $\Pi $
to be a d-partition. Clearly $\pi _{0}$ is an identity element and $\Pi $
has the inversion property since $\pi _{1}=-\pi _{1}$ implies $\pi _{2}=-\pi
_{2}$ and $\pi _{1}=-\pi _{2}$ implies $\pi _{2}=-\pi _{1}$. We now prove
the closure property. Note that $\left\vert \pi _{1}\right\vert \leq
\left\vert \pi _{2}\right\vert $ implies 
\begin{equation*}
2\left\vert \pi _{2}\right\vert -1\geq \left\vert \pi _{1}\right\vert
+\left\vert \pi _{2}\right\vert -1=p-2\text{.}
\end{equation*}%
Hence, by Theorem \ref{Th_CauchyDavenport} we have:%
\begin{equation}
\left\vert \pi _{1}+\pi _{2}\right\vert \geq \min \left( \left\vert \pi
_{1}\right\vert +\left\vert \pi _{2}\right\vert -1,p\right) =\left\vert \pi
_{1}\right\vert +\left\vert \pi _{2}\right\vert -1=p-2\text{,}  \tag{$\ast $}
\end{equation}%
and 
\begin{equation}
\left\vert \pi _{2}+\pi _{2}\right\vert \geq \min \left( 2\left\vert \pi
_{2}\right\vert -1,p\right) \geq p-2\text{.}  \tag{$\ast \ast $}
\end{equation}

\begin{enumerate}
\item Suppose that either condition (3) or (4) of the theorem holds. Since $%
\pi _{1}=-\pi _{1}$ we have that $\pi _{1}+\pi _{2}$ is a symmetric set, and
also $0\notin \pi _{1}+\pi _{2}$, which implies by $\left( \ast \right) $
that $\left\vert \pi _{1}+\pi _{2}\right\vert \in \left\{ p-2,p-1\right\} $.
Since $p$ is odd, $x\neq -x$ for any $0\neq x\in G$. Hence, any symmetric
subset of $G$ which does not contain $0$ has even cardinality, and this
forces $\left\vert \pi _{1}+\pi _{2}\right\vert =p-1$. Hence $\pi _{1}+\pi
_{2}=\pi _{1}\cup \pi _{2}$. It remains to prove that $\pi _{2}+\pi _{2}=G$.
Assume by contradiction that $\pi _{2}+\pi _{2}\subset G$. By $\left( \ast
\ast \right) $ this implies $\left\vert \pi _{2}+\pi _{2}\right\vert \in
\left\{ p-2,p-1\right\} $. Since $\pi _{2}=-\pi _{2}$, we get that $\pi
_{2}+\pi _{2}$ is a symmetric set, and that $0\in \pi _{2}+\pi _{2}$.
Therefore $\left\vert \pi _{2}+\pi _{2}\right\vert $ is odd, so $\left\vert
\pi _{2}+\pi _{2}\right\vert =p-2$. Now $\left( \ast \ast \right) $ gives $%
2\left\vert \pi _{2}\right\vert -1=p-2$, and $\left\vert \pi _{1}\right\vert
=\left\vert \pi _{2}\right\vert =\frac{p-1}{2}$. Applying Theorem \ref%
{Th_Vosper} with $A=B=\pi _{2}$ we get that $\pi _{2}$ is an arithmetic
progression of size $\frac{p-1}{2}$. Hence, condition (3) of the theorem
holds, giving $\pi _{1}+\pi _{1}=\pi _{0}\cup \pi _{2}$, implying $%
\left\vert \pi _{1}+\pi _{1}\right\vert =\left\vert \pi _{2}\right\vert +1$,
but, by Theorem \ref{Th_CauchyDavenport}, $\left\vert \pi _{1}+\pi
_{1}\right\vert \geq 2\left\vert \pi _{1}\right\vert -1$. Hence, using $%
\left\vert \pi _{1}\right\vert =\left\vert \pi _{2}\right\vert $ we get $%
\left\vert \pi _{2}\right\vert \leq 2$ in contradiction to $p>5$.

\item Suppose that condition (5) holds, namely, $\pi _{1}=-\pi _{2}$ and $%
\pi _{1}+\pi _{1}=\pi _{1}\cup \pi _{2}$. By negating both sides we get $\pi
_{2}+\pi _{2}=\pi _{1}\cup \pi _{2}$. We now prove $\pi _{1}+\pi _{2}=G$.
Note that $\pi _{1}=-\pi _{2}$ implies $0\in \pi _{1}+\pi _{2}$ and also
that $\pi _{1}+\pi _{2}$ is symmetric. This implies that $\left\vert \pi
_{1}+\pi _{2}\right\vert $ is odd. Thus, by $\left( \ast \right) $, $%
\left\vert \pi _{1}+\pi _{2}\right\vert \in \left\{ p-2,p\right\} $. Suppose 
$\left\vert \pi _{1}+\pi _{2}\right\vert =p-2$. Then, by $\left( \ast
\right) $, $\left\vert \pi _{1}+\pi _{2}\right\vert =\left\vert \pi
_{1}\right\vert +\left\vert \pi _{2}\right\vert -1$. By Theorem \ref%
{Th_Vosper} with $A=\pi _{1}$ and $B=\pi _{2}$, we get that $\pi _{1}$ is an
arithmetic progression of length $\left\vert \pi _{1}\right\vert =\frac{p-1}{%
2}$. Hence $\left\vert \pi _{1}+\pi _{1}\right\vert =2\left\vert \pi
_{1}\right\vert -1<\left\vert \pi _{1}\right\vert +\left\vert \pi
_{2}\right\vert $, in contradiction to $\pi _{1}+\pi _{1}=\pi _{1}\cup \pi
_{2}$. Therefore $\left\vert \pi _{1}+\pi _{2}\right\vert =p$, implying $\pi
_{1}+\pi _{2}=G$.
\end{enumerate}
\end{proof}

\begin{remark}
\label{Rem_sIsoTypes}Using Corollary \ref%
{Coro_MultiplicativePartitionsUnderInverse} it is easy to check that the $3$%
-part S-partitions of $\left( \mathbb{Z}_{p},+_{p}\right) $, where $p$ is a
prime, fit in the classification described in Theorem \ref%
{Th_3PartIsoClassification} as follows.

\begin{description}
\item[1] Types (1) and (2) are S-partitions.

\item[2] For $p\equiv 1\left( \func{mod}4\right) $, $p>5$, a $3$-part
S-partition is of type (4).

\item[3] For $p\equiv 3\left( \func{mod}4\right) $, $p>5$, a $3$-part
S-partition is of type (5).
\end{description}
\end{remark}

\begin{remark}
\label{Rem_d-partitionsWhichAreNotS-partitions}For every prime $p\leq 7$,
every $3$-part d-partition is an S-partition. This can be verified using
Lemma \ref{Lem_ZpSingletonOrDoubleImplyMult} and some further explicit, easy
calculations. On the other hand, for every prime $p\geq 11$, there exists a $%
3$-part d-partition which is not an S-partition. To see this note that by
Remark \ref{Rem_sIsoTypes} there is no $3$-part S-partition of type (3),
while the discussion in Section \ref{Subsect_SolutionsTo1} shows that $3$%
-part d-partitions of type (3) exist for every $p\geq 11$. Notice that this
observation answers (in the negative) Tamaschke's Problem 1.19 in \cite%
{Tamaschke1969}.
\end{remark}

\subsection{Constructions of $3$-part d-partitions of $\left( \mathbb{Z}%
_{p},+_{p}\right) $\label{Sect_ConstructionsOf3part}}

Let $p$ be a prime and $G=\left( \mathbb{Z}_{p},+_{p}\right) $. In this
section we consider explicit constructions of $3$-part d-partitions of $G$.

By Theorem \ref{Th_3PartIsoClassification}, every $3$-part d-partition of $G$%
, where $p>5$ is a prime, is of the form $\Pi =\left\{ \left\{ 0\right\} ,S,%
\overline{S\cup \left\{ 0\right\} }\right\} $ where $S$, $0<\left\vert
S\right\vert \leq \frac{p-1}{2}$, is a solution to one of the following
three equations:%
\begin{equation}
\text{ }S+S=\overline{S},~S=-S\text{ }  \label{Eq_pi1_1}
\end{equation}%
\begin{gather}
\text{ }S+S=G,~S=-S\text{, }0\notin S\text{, and }  \label{Eq_pi1_2} \\
\overline{S\cup \left\{ 0\right\} }\text{ is not an arithmetic progression
of size }\frac{p-1}{2}\text{ }  \notag
\end{gather}%
\begin{equation}
\text{ }S+S=\overline{\left\{ 0\right\} },~S=-\overline{S\cup \left\{
0\right\} }\text{.}  \label{Eq_pi1_3}
\end{equation}

In the following we discuss separately the solutions to each of these
equations. Equation (\ref{Eq_pi1_1}) had already been studied in the
literature in other contexts. Hence, in Section \ref{Subsect_SolutionsTo1}
(to follow) we review the relevant results and explain their connection to
our settings. In sections \ref{Subsect_SolutionsTo2} and \ref%
{Subsect_SolutionsTo3} we provide a full classification of the solutions to
Equations (\ref{Eq_pi1_2}) and (\ref{Eq_pi1_3}).

\subsubsection{Solutions to Equation (\protect\ref{Eq_pi1_1}) \label%
{Subsect_SolutionsTo1}}

Equation (\ref{Eq_pi1_1}) fits into a well-studied topic in additive
combinatorics - the theory of sum-free sets. An additive set $S$ is called
sum-free if $S+S\subseteq \overline{S}$, and complete if $\overline{S}%
\subseteq S+S$. Thus, $S$ is a solution to Equation (\ref{Eq_pi1_1}) if and
only if $S$ is symmetric, sum-free and complete. Note that all of these
properties are invariant under group automorphisms. This part briefly
reviews some known results which are relevant to the description of the
associated $3$-part d-partitions.

Let $p\geq 5$ be a prime. The symmetric, sum-free and complete subsets of $%
G=\left( \mathbb{Z}_{p},+_{p}\right) $ of largest possible cardinality, can
be deduced from the classification of the largest cardinality sum-free sets
by Diananda and Yap \cite{Yap1968,Yap1970} for $p=3k+2$ and by Rhemtulla and
Street \cite{RhemtullaStreet} for $p=3k+1$. They are uniquely given, up to
an automorphism, as follows: For $p=3k+2$\ \ it is the interval $\left[
k+1,2k+1\right] $ of cardinality $k+1$ and for $p=3k+1$ with $k\geq 4$ it is
the set $\left\{ k\right\} \cup \left[ k+2,2k-1\right] \cup \left\{
2k+1\right\} $ of cardinality $k$. In a recent paper \cite{HavivLevySF2017},
whose original motivation was the understanding of $3$-part d-partitions, we
extended this result and characterized all symmetric complete sum-free sets
of size at least $c\cdot p$ where $c=0.318$ and $p$ is a sufficiently large
prime, and proved that their number grows exponentially in $p$. Moreover, we
have shown that there exist constants $c_{1},c_{2},c_{3}>0$ such that for
every sufficiently large integer $n$ there exists a collection of symmetric
complete sum-free subsets of $\mathbb{Z}_{n}$ whose sizes form an arithmetic
progression with first element at most $c_{1}\sqrt{n}$, step at most $c_{2}%
\sqrt{n}$, and last element at least $\frac{n}{3}-c_{3}\sqrt{n}$ (see \cite%
{HavivLevySF2017} for more details). In particular, there exists a symmetric
complete sum-free subset of $\mathbb{Z}_{n}$ of size proportional to $\sqrt{n%
}$.

\subsubsection{Solutions to Equation (\protect\ref{Eq_pi1_2}) \label%
{Subsect_SolutionsTo2}}

Let $p>5$ be a prime. We characterize the solutions\ $S$ of Equation (\ref%
{Eq_pi1_2}) with $0<\left\vert S\right\vert \leq \frac{p-1}{2}$. It suffices
to characterize the symmetric sets $S\subseteq \mathbb{Z}_{p}^{\ast }$ such
that $0<\left\vert S\right\vert \leq \frac{p-1}{2}$ and $S+S\neq \mathbb{Z}%
_{p}$. For this we need the following definition.

\begin{definition}
\label{Def_avoiding}Let $p$ be a prime and let $S\subseteq \mathbb{Z}%
_{p}^{\ast }$ be a non-empty set of size $s:=\left\vert S\right\vert \leq 
\frac{p-1}{2}$. We say that $S$ is \emph{avoiding} if $S$ is symmetric and%
\textit{\ can be written in the form}%
\begin{equation}
S=\left\{ \pm i_{1},\pm i_{2},...,\pm i_{s/2}\right\} \text{,}
\label{Eq_Avoiding}
\end{equation}%
where $1\leq i_{1}<i_{2}<\cdots <i_{s}\leq \frac{p-1}{2}$, and $i_{j+1}\geq
i_{j}+2$ for all $1\leq j\leq \frac{s}{2}$, where $i_{\frac{s}{2}+1}:=p-i_{%
\frac{s}{2}}$.
\end{definition}

Note that an avoiding set must have a positive even size, and hence such
sets exist only for primes $p\geq 5$.

\begin{theorem}
Let $p$ be a prime. Let $S\subseteq \mathbb{Z}_{p}^{\ast }$ be a non-empty
symmetric set of size $s:=\left\vert S\right\vert \leq \frac{p-1}{2}$. Then $%
S$ is \textit{avoiding, up to an automorphism, if and only if }$S+S\neq 
\mathbb{Z}_{p}$.
\end{theorem}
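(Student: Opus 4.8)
The plan is to show that, once the gap conditions in Definition \ref{Def_avoiding} are unwound, "avoiding" says nothing more than that $S$ is symmetric and contains no two cyclically adjacent elements of $\mathbb{Z}_p$, and then to convert adjacency into membership in $S+S$ using the symmetry of $S$. Since $S\subseteq\mathbb{Z}_p^\ast$ is symmetric and $p$ is odd, no element equals its own negative, so $S$ splits into pairs $\{i,-i\}$; in particular $s=|S|$ is even and the presentation $S=\{\pm i_1,\ldots,\pm i_{s/2}\}$ with $1\le i_1<\cdots<i_{s/2}\le\frac{p-1}{2}$ is forced. I would then list the elements of $S$ in cyclic order as $i_1<\cdots<i_{s/2}<p-i_{s/2}<\cdots<p-i_1$ and compute the gaps between cyclically consecutive members: the internal gaps among positive representatives are $i_{j+1}-i_j$; the wrap gap through $0$ between $p-i_1$ and $i_1$ equals $2i_1$, which is even and hence automatically at least $2$; and the top gap between $i_{s/2}$ and $p-i_{s/2}$ equals $p-2i_{s/2}$, which is \emph{odd}. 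The conclusion of this step is the reformulation: the inequalities $i_{j+1}\ge i_j+2$ (with $i_{s/2+1}:=p-i_{s/2}$) of Definition \ref{Def_avoiding} hold precisely when no two cyclically consecutive integers both lie in $S$, i.e. when $S\cap(S-1)=\emptyset$.

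The central elementary observation comes next. Because $S=-S$, we have $S+S=S-S$: indeed $a+b=a-(-b)$ and $a-b=a+(-b)$, and $-b\in S$ whenever $b\in S$. Consequently $1\in S+S$ if and only if $1\in S-S$, if and only if there exist $a,b\in S$ with $a-b=1$, which is exactly the assertion that $S$ contains a pair of cyclically consecutive elements. Combining this with the reformulation of Step 1 gives the clean criterion I intend to use: for symmetric $S$, the set $S$ is avoiding if and only if $1\notin S+S$.

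For the automorphism bookkeeping, recall that $\mathrm{Aut}(\mathbb{Z}_p)$ consists of the maps $x\mapsto\lambda x$ with $\lambda\in\mathbb{Z}_p^\ast$, and that $\lambda S$ is again symmetric. Applying the criterion to $\lambda S$ and using $\lambda S+\lambda S=\lambda(S+S)$ together with $1\in\lambda(S+S)\iff\lambda^{-1}\in S+S$, the set $\lambda S$ is avoiding if and only if $\lambda^{-1}\notin S+S$. Hence $S$ is avoiding up to an automorphism if and only if $\lambda^{-1}\notin S+S$ for some $\lambda\in\mathbb{Z}_p^\ast$, i.e. if and only if $\mathbb{Z}_p^\ast\not\subseteq S+S$. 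Finally, $S$ is nonempty, so picking $i\in S$ yields $0=i+(-i)\in S+S$; therefore $S+S=\mathbb{Z}_p$ is equivalent to $\mathbb{Z}_p^\ast\subseteq S+S$, and the previous line gives exactly $S+S\ne\mathbb{Z}_p$. This closes both directions at once.

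The only genuinely delicate point is Step 1, specifically the treatment of the two boundary gaps. One must argue that the wrap gap through $0$ is harmless because it is even and thus never equal to $1$, whereas the top gap $p-2i_{s/2}$ is odd, so "$\ge 2$" and "$\ne 1$" coincide for it; this is where the hypothesis that $p$ is odd is used and where a sign or parity slip could go unnoticed. I would also note that, since two equal-valued consecutive integers can be adjacent only in the cyclic ordering, it is enough to test the consecutive-in-$S$ pairs, which justifies reducing exactly to the inequalities of Definition \ref{Def_avoiding}. Once this parity accounting is settled, Steps 2 and 3 are immediate and require no further input (in particular, no appeal to Cauchy--Davenport or Vosper is needed here).
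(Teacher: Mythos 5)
Your proof is correct and follows essentially the same route as the paper's: both reduce \emph{avoiding} to the condition $1\notin S+S$ (via the gap analysis and the identity $S+S=S-S$ for symmetric sets), observe that $0\in S+S$ always holds, and use the scaling automorphisms $x\mapsto\lambda x$ to move a missing nonzero element of $S+S$ to $1$. The only difference is organizational: you package everything into a single criterion ($\lambda S$ is avoiding if and only if $\lambda^{-1}\notin S+S$) that settles both directions at once, whereas the paper argues the two implications separately.
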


\begin{proof}
a. Assume that, \textit{up to an automorphism,} $S$ is avoiding. Since the
conditions $S=-S$ and $S+S\neq \mathbb{Z}_{p}$ are invariant under
automorphisms, we may assume that $S$ is avoiding. By Definition \ref%
{Def_avoiding}, no two elements of $S$ are consecutive $\func{mod}p$, and
hence $1\notin S-S=S+S$. Therefore $S+S\neq \mathbb{Z}_{p}$.\textit{\ }

b. Assume that $S+S\neq \mathbb{Z}_{p}$. Since $S\neq \emptyset $ and $S$ is
symmetric, we have $0\in S+S$. Hence, $S+S\neq \mathbb{Z}_{p}$ implies the
existence of $x\in \mathbb{Z}_{p}^{\ast }$ such that $x\notin S+S$. Then $%
S=x\left( x^{-1}S\right) $ where $x^{-1}S$ is a symmetric subset of $\mathbb{%
Z}_{p}^{\ast }$ of cardinality $s$. Hence $x^{-1}S=\left\{ \pm i_{1},\pm
i_{2},...,\pm i_{s/2}\right\} $, where $1\leq i_{1}<i_{2}<\cdots <i_{s}\leq 
\frac{p-1}{2}$. Suppose that $i_{j+1}=i_{j}+1$, for some $1\leq j\leq \frac{s%
}{2}$, where $i_{\frac{s}{2}+1}:=p-i_{\frac{s}{2}}$. Then $%
x=xi_{j+1}+x\left( -i_{j}\right) \in S+S$ - a contradiction. Hence $%
i_{j+1}\geq i_{j}+2$, for all $1\leq j\leq \frac{s}{2}$, and $x^{-1}S$ is
avoiding.
\end{proof}

The following proposition uses elementary combinatorics for counting
avoiding sets of a given size.

\begin{proposition}
\label{Prop_NumOfAvoiding}For a prime $p\geq 5$, the number of avoiding
subsets of $\mathbb{Z}_{p}$ of even positive size $s\leq \frac{p-1}{2}$ is $%
\binom{\left( p-1\right) /2-s/2}{s/2}$.
\end{proposition}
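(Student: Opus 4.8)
The plan is to set $m := s/2$ and first reduce the enumeration to a purely combinatorial count of integer sequences. Since an avoiding set $S \subseteq \mathbb{Z}_p^{\ast}$ is symmetric, it is completely determined by its intersection with $\{1,\dots,(p-1)/2\}$, i.e. by the increasing sequence $i_1 < i_2 < \cdots < i_m$; conversely, any sequence satisfying the conditions of Definition \ref{Def_avoiding} yields a unique avoiding set $S = \{\pm i_1,\dots,\pm i_m\}$ of size $2m = s$. Thus the map from admissible sequences to avoiding sets is a bijection, and it suffices to count the sequences $1 \le i_1 < \cdots < i_m \le (p-1)/2$ with $i_{j+1} \ge i_j + 2$ for $1 \le j \le m-1$ together with the wraparound condition $p - i_m \ge i_m + 2$.

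Next I would simplify the wraparound condition into a clean bound on $i_m$. The inequality $p - i_m \ge i_m + 2$ is equivalent to $2 i_m \le p - 2$, and since $p$ is odd this is the same as $i_m \le (p-3)/2 = (p-1)/2 - 1$. Consequently every admissible sequence in fact lies entirely in $\{1,\dots,(p-1)/2 - 1\}$, and the surviving requirements $i_{j+1} \ge i_j + 2$ say exactly that $\{i_1,\dots,i_m\}$ is an $m$-element subset of $\{1,\dots,(p-1)/2 - 1\}$ containing no two consecutive integers. The problem is thereby reduced to the classical one of counting non-consecutive subsets of an interval.

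Finally I would invoke, or quickly reprove via the shift bijection $i_j \mapsto i_j - (j-1)$, the standard fact that the number of $m$-subsets of $\{1,\dots,n\}$ with no two consecutive elements equals $\binom{n - m + 1}{m}$. Applying this with $n = (p-1)/2 - 1$ gives $\binom{(p-1)/2 - m}{m} = \binom{(p-1)/2 - s/2}{s/2}$, which is the asserted count.

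The computations are routine; the only points that require genuine care are the conversion of the wraparound condition $i_{m+1} := p - i_m$ into the bound $i_m \le (p-1)/2 - 1$, and the verification that the definition imposes no further hidden constraint. In particular, although an avoiding set should intuitively contain no two elements consecutive $\bmod\ p$, the ``bottom'' gap between $-i_1$ and $i_1$ is automatically at least $2$ because $i_1 \ge 1$, so Definition \ref{Def_avoiding} captures precisely the non-consecutiveness and nothing is over- or under-counted. Once this is pinned down, the reduction to non-consecutive subsets and the resulting binomial coefficient follow at once.
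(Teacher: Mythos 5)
Your proposal is correct and follows essentially the same route as the paper: both translate the avoiding condition, including the wraparound constraint $i_{s/2+1}:=p-i_{s/2}$, into a non-consecutiveness condition on a linear (non-cyclic) ground set and then apply a standard count. The only cosmetic difference is that the paper encodes sets as characteristic vectors and counts by distributing zeros into $s/2+1$ non-empty bins (stars and bars), whereas you count non-consecutive $m$-subsets directly via the shift bijection $i_j\mapsto i_j-(j-1)$; both give $\binom{(p-1)/2-s/2}{s/2}$.
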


\begin{proof}
We have $s\geq 2$ and even. Every set as in Equation (\ref{Eq_Avoiding}) can
be represented by a characteristic vector $\underline{c}\in \left\{
0,1\right\} ^{\left( p+1\right) /2}$, indexed by $\left\{ 0,1,2,...,\left(
p-1\right) /2\right\} $, where $c_{j}=1$ if and only if $j\in \left\{
i_{1},i_{2},...,i_{s/2}\right\} $. The condition $i_{j+1}\geq i_{j}+2$ for
all $1\leq j\leq \frac{s}{2}$, where $i_{\frac{s}{2}+1}:=p-i_{\frac{s}{2}}$,
translates to the condition that $\underline{c}$ starts and ends with $0$,
and no two of the $s/2$ entries which are equal to $1$ are consecutive.
Hence, the number of such distinct vectors $\underline{c}$ is equal to the
number of ways to distribute $\left( p+1\right) /2-s/2$ identical balls into 
$s/2+1$ bins so that none of the bins is left empty. This number is
precisely $\binom{\left( p-1\right) /2-s/2}{s/2}$.
\end{proof}

\begin{corollary}
For a prime $p$, the total number of avoiding subsets of $\mathbb{Z}_{p}$ is
bounded above by is $2^{p\left( c+o\left( 1\right) \right) }$, for $c\approx
0.3471$.
\end{corollary}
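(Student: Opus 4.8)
The plan is to turn the per-size count of Proposition~\ref{Prop_NumOfAvoiding} into a single closed form and then read off its exponential growth rate. Write $m := (p-1)/2$ and parametrize an even size $s$ with $2 \le s \le (p-1)/2$ by $k := s/2$, so that the count of avoiding sets of size $s$ becomes $\binom{m-k}{k}$. Summing over all admissible sizes, the total number of avoiding subsets of $\mathbb{Z}_p$ is
\begin{equation*}
T = \sum_{k=1}^{\lfloor m/2\rfloor} \binom{m-k}{k},
\end{equation*}
where the range of $k$ is exactly the range in which $\binom{m-k}{k}$ is nonzero, since the constraint $m-k \ge k$ coincides with the constraint $s \le (p-1)/2$.

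First I would invoke the classical identity $\sum_{k=0}^{\lfloor n/2\rfloor}\binom{n-k}{k} = F_{n+1}$ for the Fibonacci numbers (normalized by $F_1 = F_2 = 1$), which follows by a one-line induction from Pascal's rule. Applying it with $n = m$ and discarding the $k = 0$ term, which equals $1$, yields the exact formula $T = F_{m+1} - 1$. This reduces the whole problem to the well-understood growth of the Fibonacci sequence.

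Finally I would apply the Binet asymptotic $F_n \sim \varphi^{\,n}/\sqrt{5}$, where $\varphi = (1+\sqrt 5)/2$ is the golden ratio, to obtain $\log_2 T = (m+1)\log_2 \varphi + O(1)$. Substituting $m = (p-1)/2$ gives $\log_2 T = \tfrac{p}{2}\log_2\varphi + O(1) = p\bigl(\tfrac12\log_2\varphi + o(1)\bigr)$, so the claimed bound holds with $c = \tfrac12\log_2\varphi \approx 0.3471$ (and is in fact tight in the exponent). No step presents a genuine obstacle; the only points requiring care are the reindexing that matches the summation range to the support of the binomial coefficient, and the numerical check that $\tfrac12\log_2\varphi$ rounds to the stated constant $0.3471$.
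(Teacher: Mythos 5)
Your proof is correct, but it takes a genuinely different route from the paper's. The paper bounds each term $\binom{(p-1)/2-s/2}{s/2}$ separately via the entropy estimate $\binom{n}{\alpha n}\le 2^{n\cdot H(\alpha)}$, rewrites the exponent as $p\left( \frac{1-\beta }{2}\cdot H\left( \frac{\beta }{1-\beta }\right) +o\left( 1\right) \right)$ with $\beta =s/p$, defines $c$ as the maximum of $\frac{1-\beta }{2}\cdot H\left( \frac{\beta }{1-\beta }\right)$ over $\beta$ in the real interval $\left[ 0,1/2\right]$ (located numerically near $\beta \approx 0.276$), and absorbs the summation over the at most $p$ admissible sizes into the $o\left( 1\right)$ term. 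You instead evaluate the sum exactly: with $m=(p-1)/2$ and $k=s/2$, the identity $\sum_{k=0}^{\lfloor m/2\rfloor }\binom{m-k}{k}=F_{m+1}$ gives the closed form $T=F_{m+1}-1$, and Binet's formula then yields $\log _{2}T=\frac{p}{2}\log _{2}\varphi +O\left( 1\right)$; your reindexing is sound, since for $k\geq 1$ the support condition $m-k\geq k$ is precisely $s\leq (p-1)/2$. Your argument buys more than the paper's: an exact count, a matching lower bound (so the exponent is tight, which the paper does not claim), and a closed-form identification of the constant, $c=\frac{1}{2}\log _{2}\varphi \approx 0.34712$, revealing that the paper's numerically determined maximum of the entropy expression equals $\frac{1}{2}\log _{2}\varphi$ exactly. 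What the paper's entropy method buys in exchange is robustness: it requires no identity-spotting and applies verbatim to sums of binomial coefficients lacking any Fibonacci structure, at the cost of delivering only an upper bound with an implicitly characterized constant.
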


\begin{proof}
It is well known (see, e.g., \cite[Section 22.5]{JuknaBook}) that for every
positive integer $n$ and real number $\alpha \in \left[ 0,1\right] $ (here $%
\left[ 0,1\right] $ stands for the real interval),%
\begin{equation*}
\binom{n}{\alpha n}\leq 2^{n\cdot H\left( \alpha \right) }\text{,}
\end{equation*}%
where $H:\left[ 0,1\right] \rightarrow \left[ 0,1\right] $ is the binary
entropy function defined by%
\begin{equation*}
H\left( \alpha \right) =-\alpha \cdot \log _{2}\alpha -\left( 1-\alpha
\right) \cdot \log _{2}\left( 1-\alpha \right) \text{, }\forall \alpha \in %
\left[ 0,1\right] \text{.}
\end{equation*}

We apply this to derive an upper bound on $\binom{\left( p-1\right) /2-s/2}{%
s/2}$, which, by Proposition \ref{Prop_NumOfAvoiding}, is the number of
avoiding sets of positive size $s\leq \frac{p-1}{2}$. Define $\beta :=s/p$
and take $n:=\left( p-1\right) /2-s/2=p\left( 1-\beta \right) /2-\left(
1/2\right) $. Then $0<\beta \leq 1/2$, and defining the real parameter $%
\alpha $ by $s/2=\alpha n$, we get 
\begin{equation*}
\alpha =\alpha n/n=\frac{\beta p/2}{p\left( 1-\beta \right) /2-\left(
1/2\right) }=\frac{\beta }{1-\beta }\left( 1+\Theta \left( 1/p\right)
\right) \text{.}
\end{equation*}%
Therefore $\binom{\left( p-1\right) /2-s/2}{s/2}\leq $ $2^{p\left( \frac{%
1-\beta }{2}\cdot H\left( \frac{\beta }{1-\beta }\right) +o\left( 1\right)
\right) }$. Let $c$ denote the maximum of $\frac{1-\beta }{2}\cdot H\left( 
\frac{\beta }{1-\beta }\right) $ for $\beta $ in the real interval $\left[
0,1/2\right] $. One can verify that this maximum is attained for $\beta
\approx 0.276$. Summing over all positive even $s\leq \frac{p-1}{2}$ gives
the claim of the corollary.
\end{proof}

\begin{remark}
Let $p>5$ be a prime. Note that by the extra condition following Equation (%
\ref{Eq_pi1_2}) we have to rule out $S\subseteq \mathbb{Z}_{p}^{\ast }$ for
which $S+S=\mathbb{Z}_{p}$ and $S_{1}:=\overline{S\cup \left\{ 0\right\} }$
is an arithmetic progression of size $\frac{p-1}{2}$. Since $S_{1}$ is a
symmetric set which does not contain $0$ we must have that $\left\vert
S_{1}\right\vert =\frac{p-1}{2}$ is even. This implies $p\equiv 1\left( 
\func{mod}4\right) $. Furthermore, up to an automorphism, $S_{1}=\left[ 
\frac{p+3}{4},\frac{3p-3}{4}\right] $. The reader can check that 
\begin{equation*}
S=\overline{S_{1}\cup \left\{ 0\right\} }=\left[ 1,\tfrac{p-1}{4}\right]
\cup \left[ \tfrac{3p+1}{4},p-1\right]
\end{equation*}%
satisfies $\left\vert S\right\vert =\frac{p-1}{2}$ and $S+S=\mathbb{Z}_{p}$
and hence should be excluded, together with all of its automorphic images,
from the set of solutions of Equation (\ref{Eq_pi1_2}).
\end{remark}

\subsubsection{Solutions to Equation (\protect\ref{Eq_pi1_3}) \label%
{Subsect_SolutionsTo3}}

The following theorem characterizes the solutions to Equation (\ref{Eq_pi1_3}%
).

\begin{theorem}
Let $G=\left( \mathbb{Z}_{p},+_{p}\right) $, where $p>5$ is a prime. The
solutions of Equation (\ref{Eq_pi1_3}) are all the subsets $S\subseteq G$
which satisfy: $\left\vert S\right\vert =\frac{p-1}{2}$, $S\cap \left(
-S\right) =\emptyset $ and $S$ is not an arithmetic progression. The number
of d-partitions of $G$ defined by these solutions is $2^{\frac{p-1}{2}-1}-%
\frac{p-1}{2}$.
\end{theorem}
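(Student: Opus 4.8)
The plan is to separate the statement into its characterization part and its enumeration part, using the additive-combinatorics tools of Section~\ref{Subsect_AdditiveCombResults}. First I would unpack Equation~(\ref{Eq_pi1_3}). The condition $S=-\overline{S\cup\{0\}}$ says exactly that $-S=\mathbb{Z}_p\setminus(S\cup\{0\})$, i.e. that $\{0\}$, $S$ and $-S$ partition $\mathbb{Z}_p$; since $|S|=|-S|$ this is equivalent to the two conditions $S\cap(-S)=\emptyset$ (which forces $0\notin S$) and $|S|=\frac{p-1}{2}$. It then remains to decide when the surviving requirement $S+S=\overline{\{0\}}$ holds. Observe that $0\in S+S$ iff some $x\in S$ has $-x\in S$, so the condition $S\cap(-S)=\emptyset$ is precisely the statement $0\notin S+S$; thus $S+S\subseteq\overline{\{0\}}$ is automatic and the real content is $|S+S|=p-1$. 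By the Cauchy--Davenport inequality (Theorem~\ref{Th_CauchyDavenport}), $|S+S|\ge 2|S|-1=p-2$, so $|S+S|\in\{p-2,p-1\}$.

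The key step of the characterization is to decide which of these two values occurs, via Vosper's Theorem (Theorem~\ref{Th_Vosper}). Since $p>5$ gives $|S|=\frac{p-1}{2}\ge 3\ge 2$, if $|S+S|=p-2=|S|+|S|-1\le p-2$ then Vosper forces $S$ to be an arithmetic progression; conversely, if $S$ is an arithmetic progression of size $\frac{p-1}{2}$ then the addition formula for progressions gives $|S+S|=2|S|-1=p-2$. Hence, under the partition conditions, $|S+S|=p-1$ holds if and only if $S$ is \emph{not} an arithmetic progression, which is exactly the claimed characterization: the solutions are the $S$ with $|S|=\frac{p-1}{2}$, $S\cap(-S)=\emptyset$ and $S$ not an arithmetic progression.

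For the count, I would first note that a set $S$ with $|S|=\frac{p-1}{2}$ and $S\cap(-S)=\emptyset$ is exactly a choice of one representative from each of the $\frac{p-1}{2}$ pairs $\{x,-x\}$ in $\mathbb{Z}_p^{\ast}$, so there are $2^{(p-1)/2}$ of them. The solutions of Equation~(\ref{Eq_pi1_3}) are these sets with the arithmetic progressions removed, and each unordered d-partition $\{\{0\},S,-S\}$ arises from exactly the two distinct solutions $S$ and $-S$ (distinct because $S$ and $-S$ are nonempty and disjoint, and $-S$ is a solution whenever $S$ is). Hence the number of d-partitions equals $\tfrac12\big(2^{(p-1)/2}-A\big)$, where $A$ is the number of arithmetic progressions of size $\frac{p-1}{2}$ satisfying $S\cap(-S)=\emptyset$. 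Matching this against the target value $2^{(p-1)/2-1}-\frac{p-1}{2}$ reduces the whole enumeration to proving $A=p-1$.

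The main obstacle is this last enumeration of ``bad'' progressions. I would write such a progression as $S=a+\delta\cdot[0,N-1]$ with $N=\frac{p-1}{2}$ and $\delta\neq 0$, and compute that $S\cap(-S)=\emptyset$ is equivalent to $-2a\delta^{-1}$ lying outside $\{j+k:0\le j,k\le N-1\}=\{0,1,\dots,p-3\}$, i.e. to $2a\delta^{-1}\in\{1,2\}$; this gives exactly the two admissible base points $a\in\{\delta\cdot 2^{-1},\delta\}$ for each of the $p-1$ choices of $\delta$, hence $2(p-1)$ admissible pairs $(a,\delta)$. To pass from pairs to sets I would use that, because $2N-1=p-2<p$, the difference set $S-S$ omits exactly $\{N\delta,-N\delta\}$ and therefore determines $\delta$ up to sign; thus each progression of this size has precisely the two representations $(a,\delta)$ and $(a+(N-1)\delta,-\delta)$, and the admissible-pair set is closed under this fixed-point-free reversal involution, giving $A=\tfrac{2(p-1)}{2}=p-1$. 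Substituting back yields the stated count $2^{(p-1)/2-1}-\frac{p-1}{2}$, and the delicate bookkeeping of these representations, together with the uniqueness of the step up to sign, is where I expect the real work to lie.
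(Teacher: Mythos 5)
Your proposal is correct, and its overall architecture coincides with the paper's: the characterization is obtained exactly as in the paper (unpack $S=-\overline{S\cup \left\{ 0\right\} }$ into $\left\vert S\right\vert =\frac{p-1}{2}$ and $S\cap \left( -S\right) =\emptyset $, then use Cauchy--Davenport to force $\left\vert S+S\right\vert \in \left\{ p-2,p-1\right\} $ and Vosper to identify the deficient case with arithmetic progressions), and the count is organized the same way, as $\tfrac{1}{2}\left( 2^{\left( p-1\right) /2}-A\right) $ with $A$ the number of progressions of size $\frac{p-1}{2}$ disjoint from their negatives. Where you genuinely diverge is in proving $A=p-1$: the paper exploits invariance under the automorphisms $x\mapsto \delta \cdot _{p}x$, normalizes any bad progression to step $1$, and checks (leaving the verification to the reader) that the unique normalized one is $\left[ 1,\tfrac{p-1}{2}\right] $, so the bad sets are exactly the $p-1$ dilates of that interval; you instead parametrize progressions by pairs $\left( a,\delta \right) $, derive the explicit admissibility criterion $2a\delta ^{-1}\in \left\{ 1,2\right\} $ (giving $2\left( p-1\right) $ pairs), and then quotient by the fixed-point-free reversal $\left( a,\delta \right) \mapsto \left( a+\left( N-1\right) \delta ,-\delta \right) $, justified by the observation that the difference set $S-S$ omits only $\pm N\delta $ and hence determines the step up to sign. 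Your route is more computational but has the virtue of making explicit two points the paper glosses over: that each bad progression has exactly two $\left( a,\delta \right) $-representations, and (equivalently) that distinct dilates $\delta \cdot _{p}\left[ 1,\tfrac{p-1}{2}\right] $ are distinct sets; the paper's route is shorter because the automorphism action does the bookkeeping for it.
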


\begin{proof}
1. Suppose that $S\subseteq G$ satisfies $\left\vert S\right\vert =\frac{p-1%
}{2}$, $S\cap \left( -S\right) =\emptyset $ and that $S$ is not an
arithmetic progression. We will prove that $S$ is a solution to Equation (%
\ref{Eq_pi1_3}). The condition $S\cap \left( -S\right) =\emptyset $ implies
that for any $x,y\in S$ we have $x+y\neq 0$, and hence $0\notin $ $S+S$.
Since $\left\vert S\right\vert =\frac{p-1}{2}$, we have, by Theorem \ref%
{Th_CauchyDavenport} that $\left\vert S+S\right\vert \geq 2\left\vert
S\right\vert -1=p-2$. Thus $p-2\leq \left\vert S+S\right\vert \leq p-1$.
However, if $p-2=\left\vert S+S\right\vert $ then $S$ is an arithmetic
progression by Theorem \ref{Th_Vosper} - a contradiction. Therefore $%
\left\vert S+S\right\vert =p-1$, which together with $0\notin $ $S+S$
implies $S+S=\overline{\left\{ 0\right\} }$. Furthermore, $S\cap \left(
-S\right) =\emptyset $ implies $0\notin S$, and so $S$, $-S$ and $\left\{
0\right\} $ are mutually disjoint. Since $\left\vert S\right\vert =\frac{p-1%
}{2}$ we get $S=-\overline{S\cup \left\{ 0\right\} }$.

2. Suppose that $S$ is a solution to Equation (\ref{Eq_pi1_3}). We will
prove $\left\vert S\right\vert =\frac{p-1}{2}$, $S\cap \left( -S\right)
=\emptyset $ and that $S$ is not an arithmetic progression. Since $S=-%
\overline{S\cup \left\{ 0\right\} }$ we have that $0\notin S$, so $G=S\cup
\left( -S\right) \cup \left\{ 0\right\} $ is a disjoint union, and hence $%
S\cap \left( -S\right) =\emptyset $ and $\left\vert S\right\vert =\frac{p-1}{%
2}$. Finally note that $S$ cannot be an arithmetic progression, since, in
that case, $\left\vert S+S\right\vert =2\left\vert S\right\vert -1=$ $p-2$
while $S+S=\overline{\left\{ 0\right\} }$ implies $\left\vert S+S\right\vert
=p-1$ - a contradiction.

3. It remains to count the number of d-partitions defined by the solutions
of Equation (\ref{Eq_pi1_3}). The number of sets $S\subseteq G$ which
satisfy $\left\vert S\right\vert =\frac{p-1}{2}$ and $S\cap \left( -S\right)
=\emptyset $ is $2^{\frac{p-1}{2}}$. From this number we have to subtract
the number of arithmetic progressions of size $\tfrac{p-1}{2}$ which satisfy 
$S\cap \left( -S\right) =\emptyset $. Observe that $S\subseteq G$ is an
arithmetic progression of size $\tfrac{p-1}{2}$ which satisfies $S\cap
\left( -S\right) =\emptyset $, if and only if for any $\delta \in \mathbb{Z}%
_{p}^{\ast }$, $\delta \cdot _{p}S$ is also an arithmetic progression with
the same properties. Let $S$ be an arithmetic progression such that $%
\left\vert S\right\vert =\frac{p-1}{2}$ and $S\cap \left( -S\right)
=\emptyset $ . Let $\delta $ be the step of $S$. Then $\delta \in \mathbb{Z}%
_{p}^{\ast }$ and $\delta ^{-1}\cdot _{p}S$ is an arithmetic progression of
step $1$ such that $\left\vert \delta ^{-1}\cdot _{p}S\right\vert =\frac{p-1%
}{2}$ and $\left( \delta ^{-1}\cdot _{p}S\right) \cap \left( -\delta
^{-1}\cdot _{p}S\right) =\emptyset $ . But one can easily check that this
implies $\delta ^{-1}\cdot _{p}S=$ $\left[ 1,\tfrac{p-1}{2}\right] $.
Therefore, the number of solutions to Equation (\ref{Eq_pi1_3}) is $2^{\frac{%
p-1}{2}}-\left( p-1\right) $. Finally notice that the solutions to Equation (%
\ref{Eq_pi1_3}) divide into pairs $\left( S,-S\right) $ where $S$ and $-S$
define the same d-partition. This proves the claim of the theorem.
\end{proof}

\section{Appendix\label{Appendix_ProofOdDfield}}

\begin{proof}[Proof of Proposition \protect\ref{Prop_d-field}]
We replace $\oplus $ by $+$ and write $xy$ for $x\otimes y$. Let $d\in
D\backslash \left\{ \varepsilon \right\} $, and consider the infinite
sequence of sums, $d,d+d,d+d+d,...$. By definition of the canonical order $%
\leq _{D}$, any two successive terms in the sequence, with $n$ and $n+1$
summands ($n\geq 1$ an integer), are either equal or satisfy 
\begin{equation*}
\underset{n\text{ summands}}{\underbrace{d+d+\cdots +d}}~<_{D}~\underset{n+1%
\text{ summands}}{\underbrace{d+d+\cdots +d}}\text{.}
\end{equation*}%
Thus, if no two successive terms of the sequence are equal, the sequence is
strictly increasing with respect to $\leq _{D}$, and hence, its terms are
pairwise distinct. Otherwise, let $n$ be the smallest natural number such
that 
\begin{equation*}
\underset{n\text{ summands}}{\underbrace{d+d+\cdots +d}}~=~\underset{n+1%
\text{ summands}}{\underbrace{d+d+\cdots +d}}\text{.}
\end{equation*}%
Then the sequence of sums is strictly increasing up to the $n$-fold sum and,
as can be proven by easy induction, any $m$-fold sum with $m>n$ is equal to
the $n$-fold sum. In such a case we say that $d$ is $n$-idempotent (\cite[p.
15]{GondranMinouxDioidBook2010}). Using distributivity we get:%
\begin{equation*}
\underset{n\text{ summands}}{\underbrace{\left( e+e+\cdots +e\right) }d}~=~%
\underset{n+1\text{ summands}}{\underbrace{\left( e+e+\cdots +e\right) }d}%
\text{.}
\end{equation*}%
Since $d\neq \varepsilon $ it is invertible and we get:%
\begin{equation*}
\underset{n\text{ summands}}{\underbrace{e+e+\cdots +e}}~=~\underset{n+1%
\text{ summands}}{\underbrace{e+e+\cdots +e}}\text{.}
\end{equation*}%
Multiplying by any $d^{\prime }\in D\backslash \left\{ \varepsilon \right\} $
we obtain 
\begin{equation*}
\underset{n\text{ summands}}{\underbrace{d^{\prime }+d^{\prime }+\cdots
+d^{\prime }}}~=~\underset{n+1\text{ summands}}{\underbrace{d^{\prime
}+d^{\prime }+\cdots +d^{\prime }}}\text{.}
\end{equation*}%
It follows that either for each $d\in D\backslash \left\{ \varepsilon
\right\} $ all finite sums of the form $d,d+d,d+d+d,...$ are distinct or
there exists a unique minimal $n$ such that each $d\in D\backslash \left\{
\varepsilon \right\} $ is $n$-idempotent. Furthermore, assume that each $%
d\in D\backslash \left\{ \varepsilon \right\} $ is $n$-idempotent, where $%
n\geq 1$. Consider 
\begin{equation*}
x:=~\underset{n\text{ summands}}{\underbrace{e+e+\cdots +e}}\text{.}
\end{equation*}%
Computing $x^{2}$ using the distributive law gives 
\begin{equation*}
x^{2}=\underset{n^{2}\text{ summands}}{~\underbrace{e+e+\cdots +e}}~=~%
\underset{n\text{ summands}}{\underbrace{e+e+\cdots +e}}\text{.}
\end{equation*}%
Hence $x^{2}=x$. Since $x$ is invertible this immediately implies $x=e$.
Thus we are forced to have $n=1$, which is equivalent to $D$ being an
idempotent d-field.

The last part of the proof is based on \cite{Gunawardena} (Proposition 2.7
and its proof). Let $D$ be an idempotent d-field. Suppose that $D$ has a
largest element $g$. Note that this assumption holds true if $D$ is finite
since in such a case we can simply take $g$ to be the sum of all elements of 
$D$. By assumption, $e\leq _{D}g$ which gives, when multiplied by $g$, $%
g\leq _{D}g^{2}$ (\cite[Proposition 1.6.1.7]{GondranMinouxDioidBook2010}).
Since $g$ is the largest element this forces $g^{2}=g$ and hence, since $g$
is invertible, $g=e$. Thus $e$ is the greatest element of $D$. Let $d\in
D\backslash \left\{ \varepsilon \right\} $. Then $d$ is invertible and $%
d^{-1}\leq e$ since $e$ is the greatest element of $D$. Multiplying by $d$
gives $e\leq d$ which forces $d=e$. This proves that $D=\mathbb{B}$.
\end{proof}

\textbf{{}Acknowledgements: }We would like to thank Marcel Herzog, Mikhail
Klin, Attila Mar\'{o}ti, Andrew Misseldine and Mikhail Muzychuk for useful
discussions at various stages of this work. We are grateful to the anonymous
referees for their comments and especially for bringing Tamaschke's work to
our attention.

\bigskip

\bigskip

\bigskip

\bigskip

\end{document}